\documentclass[a4paper]{amsart}
\usepackage[utf8]{inputenc}
\usepackage[T1]{fontenc}
\usepackage{lmodern}
\usepackage{amsmath, amsthm, amssymb, amsfonts}
\usepackage[all]{xy}
\usepackage{nicefrac,mathtools}
\usepackage[shortlabels, inline]{enumitem}
\usepackage{microtype}
\usepackage{hyperref}
\usepackage{graphicx}
\usepackage[all]{xy}
\usepackage{xcolor}
\usepackage{tikz-cd}
\usepackage[lite]{amsrefs}
\usepackage{thmtools}

\numberwithin{equation}{section}
\theoremstyle{plain}
\newtheorem{theorem}[equation]{Theorem}

\newtheorem{lemma}[equation]{Lemma}
\newtheorem{proposition}[equation]{Proposition}

\newtheorem{corollary}[equation]{Corollary}

\theoremstyle{definition}
\newtheorem{definition}[equation]{Definition}
\newtheorem{notation}[equation]{Notation}

\theoremstyle{remark}
\newtheorem{remark}[equation]{Remark}

\newtheorem{example}[equation]{Example}

\newcommand*{\Cont}{\mathrm C}
\newcommand*{\Contc}{\mathrm{C_c}}
\newcommand{\Contz}{\mathrm{C_0}}
\newcommand{\base}[1][G]{{#1}^{(0)}}
\newcommand{\dd}{\mathrm{d}}
\newcommand{\7}{\backslash}

\newcommand{\tens}{\mathbin{\otimes}}

\newcommand{\Id}{\textup{Id}}
\newcommand{\supp}{{\textup{supp}}}

\newcommand{\Mlp}{\mathrm{m}}
\newcommand{\Dia}{\mathrm{dia}}
\newcommand{\Ist}{\text{Fix}}
\newcommand{\UCr}{\mathbb{S}^1}
\newcommand{\Bor}{\mathrm{Bor}}
\newcommand{\Ho}{\mathrm{Ho}}
\newcommand{\HoC}{\mathrm{Ho,Co}}
\newcommand{\Cov}[1][A]{\mathcal{#1}}
\DeclareMathOperator{\Img}{\textnormal{Im}}


\newcommand*{\Hilm}[1][H]{\mathcal #1}
\newcommand*{\Hils}[1][M]{\mathcal #1}
\newcommand*{\Mult}{\mathcal M}
\newcommand{\Bound}{\mathbb B}
\newcommand*{\Cst}{\textup C^*}
\newcommand{\Cred}{\textup{C}^*_{\textup{r}}}

\newcommand{\Comp}{\mathbb{K}}
\newcommand{\TComp}{\mathbb{K}_{\mathrm{T}}}
\newcommand*{\inpro}[2]{\langle#1, #2\rangle}
\newcommand{\abs}[1]{\lvert #1\rvert}
\newcommand{\norm}[1]{\lvert\lvert #1\rvert\rvert}

\newcommand{\KK}{\rm{KK}}
\newcommand{\C}{\mathbb{C}}
\newcommand{\R}{\mathbb{R}}
\newcommand{\Z}{\mathbb{Z}}

\newcommand{\N}{\mathbb{N}}

\newcommand{\homeo}{\approx}
\newcommand{\iso}{\simeq}
\newcommand{\LSec}{\mathcal{S}}
\newcommand{\Lone}{\mathcal{L}^1}
\newcommand{\Sph}{\mathbb{S}}

\newcommand*{\Star}{$^*$\nobreakdash-}
\newcommand*{\nb}{\nobreakdash}

\newcommand*{\defeq}{\mathrel{\vcentcolon=}}
\newcommand{\inverse}{^{-1}}
\newcommand{\RN}{\mathfrak{D}}
\newcommand{\mi}{\mathrm{i}}
\newcommand{\me}{\mathrm{e}}
\newcommand{\etale}{{\'e}tale}
\newcommand{\Pt}{\{*\}}
\newcommand{\Ext}{\mathcal{E}}
\newcommand{\ExtS}{\mathcal{E}^{\mathrm{Set}}}
\newcommand{\ExtB}{\mathcal{E}^{\mathrm{Borel}}}

\newcommand{\Res}{\mathcal{R}}

\title[Proper topological correspondences]{Locally free actions of groupoids and proper topological correspondences}
\author{Rohit Dilip Holkar}
\email{rohit.d.holkar@gamil.com}
\address{Department of Mathematics, Indian Institute of Science Education and Research, Dr.\,Homi Bhabha Road, NCL Colony, Pashan, Pune 411008, India.}

\keywords{\(\Cst\)-correspondences, topological correspondences, proper \(\Cst\)-correspondences, proper topological correspondences, groupoid \(\KK\)\nb-theory}
\thanks{\emph{Subject class.} 22D25, 22A22, 46H35, 47L30, 46L08, 19K35, 19L99.}

\begin{document}
\begin{abstract}
Let \((G,\alpha)\) and \((H,\beta)\) be locally compact Hausdorff groupoids with Haar systems, and let \((X,\lambda)\) be a topological correspondence from \((G,\alpha)\) to \((H,\beta)\) which induce the \(\Cst\)\nb-correspondence \(\Hilm(X)\colon \Cst(G,\alpha)\to \Cst(H,\beta)\). We give sufficient topological conditions which when satisfied the \(\Cst\)\nb-correspondence \(\Hilm(X)\) is proper, that is, the groupoid \(\Cst(G,\alpha)\) acts on the \(\Cst(H,\beta)\)\nb-Hilbert module \(\Hilm(X)\) via the comapct operators. Thus a proper topological correspondence produces an element in \(\KK(\Cst(G,\alpha),\Cst(H,\beta))\).
\end{abstract}

\maketitle
\tableofcontents

\section*{Introduction}
\label{sec:intro}

Given \(\Cst\)\nb-algebras  \(A\) and \(B\), a \(\Cst\)\nb-correspondence from \(A\) to \(B\) is a pair \((\Hils, \phi)\) where \(\Hils\) is a Hilbert \(B\)\nb-module, and \(\phi\colon A\to \Bound(\Hils)\) is a nondegenerate representation. We call the \(\Cst\)\nb-correspondence \((\Hils,\phi)\)  \emph{proper} if the representation \(\phi\colon A\to \Comp(\Hils)\). Proper correspondences are important and studied for various purposes. In this article, we shall denote a \(\Cst\)\nb-correspondence merely by the Hilbert module in its definition. We shall not come across an occasion where the representation needs to be explicitly spelled out.

In~\cite{Holkar2017Construction-of-Corr}, topological correspondences of locally compact groupoids equipped with Haar systems are defined. Let \((G,\alpha)\) and \((H,\beta)\) be locally compact groupoids equipped with Haar systems. A topological correspondence from \((G,\alpha)\) to \((H,\beta)\) is a pair \((X,\lambda)\) where \(X\) is a \(G\)\nb-\(H\)-bispace with the  \(H\)\nb-action  proper, \(\lambda\defeq\{\lambda_u\}_{u\in\base[H]}\) is a proper \(H\)\nb-invariant family of measures along the right momentum map \(s_X\colon X\to \base[H]\) and each measure \(\lambda_u\) in \(\lambda\) is  \((G,\alpha)\)\nb-quasi-invariant. The main theorem in~\cite{Holkar2017Construction-of-Corr}*{Theorem 2.10} asserts that a certain completion of \(\Contc(X)\) gives a \(\Cst\)\nb-correspondence \(\Hilm(X,\lambda)\) from \(\Cst(G,\alpha)\) to \(\Cst(H,\beta)\). 

For the above topological correspondence \((X,\lambda)\), let \(\base\xleftarrow{r_X} X\xrightarrow{s_X}\base[H]\) be the momentum maps. The \(\Cst\)\nb-algebra  \(\Comp(\Hilm((X,\lambda))\) of compact operators on the Hilbert \(\Cst(H,\beta)\)-module \(\Hilm(X,\lambda)\) can be described using a topological, namely, a hypergroupoid equipped with a Haar system. To do this, one observes that the right diagonal action of \(H\) on the fibre product \(X\times_{s_X,}X\) is proper. In~\cite{Renault2014Induced-rep-and-Hpgpd}, Renault shows that the quotient space \((X\times_{s_X,\base[H],s_X})/H\) is a hypergroupoid equipped with a Haar system \(\tilde{\lambda}\). The Haar system \(\tilde{\lambda}\) is induced by \(\lambda\). The \(\Cst\)\nb-algebra of this hypergroupoid, \(\Cst(X\times_{s_X,\base[H],s_X})/H), \tilde{\lambda}\) is the \(\Cst\)\nb-algebra of compact operators on \(\Hilm(X,\lambda)\). We revise this result in Section~\ref{comp-op}; this was written for locally compact, Hausdorff and second countable groupoids and spaces in~\cite{mythesis}.

The notion of topological correspondence in~\cite{Holkar2017Construction-of-Corr} generalises many existing notions of topological correspondences in the literature including Jean-Louis Tu's notion of \emph{locally proper generalised morphism} in~\cite{Tu2004NonHausdorff-gpd-proper-actions-and-K}. In~\cite{Tu2004NonHausdorff-gpd-proper-actions-and-K}, Tu defines \emph{a proper locally proper generalised morphism} which is a \emph{proper topological correspondences}. That is, he adds an extra conditions to his  topological correspondence so that the \(\Cst\)\nb-correspondence \(\Hilm(X,\lambda)\) it produces is proper. This work of Tu raises a question, when is a topological correspondence defined in~\cite{Holkar2017Construction-of-Corr} proper? To be precise, under what extra hypothesis on the topological correspondence in~\cite{Holkar2017Construction-of-Corr} is the corresponding \(\Cst\)\nb-correspondence proper?

Apart from Tu's above-mentioned work, this question has been discussed and answered for special cases. Marth-Stadler and O'uchi define a topological correspondence and a proper one in~\cite{Stadler-Ouchi1999Gpd-correspondences}. This work is a special case of the work in~\cite{Tu2004NonHausdorff-gpd-proper-actions-and-K}. In~\cite{Muhly-Tomforde-2005-Topological-quivers}, Muhly and Tomford define topological quivers, which are special types of topological correspondences (see~\cite{Holkar2017Construction-of-Corr}*{Example 3.3}). In the same article, they also characterise proper topological quivers, see~\cite{Muhly-Tomforde-2005-Topological-quivers}*{Theorem 3.11}.

Now we briefly elaborate the above works putting Tu's at the end. Let \(Y\) be a locally compact Hausdorff space. Slightly modifying~\cite{Muhly-Tomforde-2005-Topological-quivers}*{Definition 3.1}, we may say that a topological quiver from \(Y\) to itself is a quadruple \((X,b,f,\lambda)\) where \(b,f\colon X\to Y\) are continuous maps and \(\lambda\) is a continuous family of measures along \(f\).~\cite{Muhly-Tomforde-2005-Topological-quivers}*{Theorem 3.11} says that the quiver is proper if and only if \(f\) is a local homeomorphism and \(b\) is proper. 

Let \(Y\) and \(Z\) be locally compact Hausdorff spaces. Then as in~\cite{Buss-Holkar-Meyer2016Gpd-Uni-I}, in general, one defines that a quiver from \(Y\) to \(Z\) is a quadruple \((X,b,f,\lambda)\) where \(b\colon X\to Y\) and \(f\colon X\to Z\) are continuous maps, and \(\lambda\) is a continuous family of measures along \(f\). One may check that~\cite{Muhly-Tomforde-2005-Topological-quivers}*{Theorem 3.11} is valid for this slightly generalised notion of quivers also. If one thinks of the spaces \(Y\) and \(Z\) above as the trivial groupoids, then both the above of quivers are topological correspondences, see~\cite{Holkar2017Construction-of-Corr}*{Example 3.3}.

Tu~\cite{Tu2004NonHausdorff-gpd-proper-actions-and-K} works with locally compact groupoids equipped with Haar systems and the space involved in the (proper) topological correspondence is locally compact but not necessarily Hausdorff. Marta-Stadler and O'uchi's work~\cite{Stadler-Ouchi1999Gpd-correspondences} involves Hausdorff groupoids equipped with Haar systems and Hausdorff spaces; their proper correspondences is a special case of Tu's work, hence we focus on~\cite{Tu2004NonHausdorff-gpd-proper-actions-and-K}. Let \((G,\alpha)\) and \((H,\beta)\) be locally compact groupoids equipped with Haar systems. A topological correspondence from \((G,\alpha)\) to \((H,\beta)\) in the sense of Tu is a \(G\)\nb-\(H\)-bispace \(X\) such that
\begin{enumerate}[i), leftmargin=*]
\item both the actions are proper,
\item the action of \(G\) is free,
\item the right momentum map \(s_X\colon X\to\base[H]\) induces an isomorphism \([s_X]\colon X/H\to\base[H]\).\label{intro-Tu-1}
\end{enumerate}
Property~\ref{intro-Tu-1} above is equivalent to
\begin{enumerate}[resume, label=iii')]
\item the map \(\Mlp_{s_X}\colon G\times_{s_G,\base, r_X}X\to X\times_{s_X,\base[H],s_X}X\),
\[
\Mlp_{s_X}(\gamma,x)\mapsto (\gamma x,x),
\]
 is a homeomorphism\label{intro-Tu-2} where \(s_G\) is the source map of \(G\), \(r_X\colon X\to\base\) and \(s_X\colon X\to\base[H]\) are the momentum maps for the action of \(G\) and \(H\), respectively, and the domain and codomain of the function \(\Mlp_{s_X}\) are the obvious fibre products.
\end{enumerate}
\cite{Holkar2017Construction-of-Corr}*{Example 3.7} shows that this is a topological correspondence in our sense, that is, in the sense of~\cite{Holkar2017Construction-of-Corr}*{Definition 2.1}. \cite{Tu2004NonHausdorff-gpd-proper-actions-and-K}*{Definition 7.6} says that the above correspondence is proper if the map \([r_X]\colon X/H\to \base\) induced by the momentum map \(r_X\colon X\to \base\) for the action of \(G\) is proper. \cite{Tu2004NonHausdorff-gpd-proper-actions-and-K}*{Theorem 7.8} proves that if the above topological correspondence \(X\) is proper, then so is the \(\Cst\)\nb-correspondence \(\Hilm(X)\colon \Cred(G,\alpha)\to \Cred(H,\beta)\).

Now we discuss the three conditions in the definition of a proper correspondence which is proposed in this article. Let \((X,\lambda)\) be a topological correspondence from a locally compact groupoid equipped with a Haar system \((G,\alpha)\) to another one, say \((H,\beta)\). Let \(r_X\) and \(s_X\) be the momentum maps for the actions of \(G\) and \(H\), respectively, on \(X\). First of all, one may expect from preceding literature survey that the map \([r_X]\colon X/H\to \base\) should be proper. This is true; this is one of the conditions we need. However, this is not a sufficient condition while dealing a general topological correspondence. For example, if \(G=H=\Pt\), the trivial group, and \((X,\lambda)\) is a compact measure space, then \([r_X]\) is proper. In this case, \(\Hilm(X,\lambda)=\mathcal{L}^2(X,\lambda)\) and \(\Cst(G)=\Cst(H)=\C\). The action of \(\C\) on \(\Hilm(X)\) is by scalar multiplication which has the identity operator \(1\). Hence, in general, \(\Hilm(X,\lambda)\colon \C\to \C\) is not a proper correspondence. 

Secondly, since there are families measures involved one may expect that there should be condition(s) that relate the family of measures on the bispace and the Haar system on the left groupoid. This is a technical condition; this is the third condition in Definition~\ref{def:prop-corr}.

Thirdly and finally, an interesting and not-at-all-obvious condition is the rephrasing~\ref{intro-Tu-2} of the property~\ref{intro-Tu-1} appearing in the definition of Tu's proper correspondence above. It is a \emph{classical} condition that appears in the definition of a groupoid equivalence (\cite{Muhly-Renault-Williams1987Gpd-equivalence}) and may other notions of groupoid morphisms. This property gives a family of measures on \(X\) as observed in~\cite{Holkar2017Construction-of-Corr}*{Exmaple 3.7}. What is so interesting about this property? In its other form, namely~\ref{intro-Tu-2}, the property proves very useful.

With the same notations as in the last paragraph, let \(b\in\Contc(G)\) and \(\xi\in\Contc(X)\). Then the action of \(b\) on \(\xi\) that gives the representation of \(\Cst(G,\alpha)\) on \(\Hilm(X,\lambda)\) is given by
\[
b\xi(x)=\int_G b(\gamma)\xi(\gamma\inverse x)\Delta(\gamma,\gamma\inverse x)\,\dd\alpha^{r_X(x)}(\gamma)
\]
where \(\Delta\) is the adjoining function for the correspondence \((X,\lambda)\). Now, we aim to assign \(b\) an element \(\tilde{b}\) in \(\Contc((X\times_{s_X,\base[H],s_X}X)/H)\) such that \(\tilde{b}\xi=b\xi\). For this purpose, using the fact that \([r_X]\) is proper, we choose a\emph{dummy} function \(t\) in \(\Contc(X)\). Then \(b\otimes_{\base} t\in \Contc(G\times_{\base}X)\). If \(\Mlp_{s_X}\) were a homeomorphism, then \((b\otimes_{\base} t)\circ\Mlp_{s_X}\inverse\) is an element of \(\Contc(X\times_{s_X,\base[H],s_X}X)\). If one chooses the dummy function \(t\) carefully, then after averaging over \(H\), the image of \(b\otimes_{\base} t\) in \(\Contc((X\times_{s_X,\base[H],s_X}X)/H)\) serves as the required function \(\tilde{b}\).

Note that if \(\Mlp_{s_X}\) is a homeomorphism, then the action of \(G\) on \(X\) is free as well as proper. A bit of more work shows that the above argument goes through when \(\Mlp_{s_X}\) is a homeomorphism onto its image. An action of \(G\) on \(X\) for which \(\Mlp_{s_X}\) is a homeomorphism onto its image are called \emph{basic} in~\cite{Meyer-Zhu2014Groupoids-in-categories-with-pretopology}. However, the following simple example shows that for us even basic actions too much to ask for. Let \(\Sph^1\) denote the unit circle; consider this compact space as the trivial groupoid. Let the multiplicative group of order two, \(\Z/2\Z\defeq \{1,-1\}\), act on the space \(\Sph^1\) from left by \((-1)\cdot z =\bar{z}\) for \(-1\in \Z/2\Z\) and \(z\in \Sph^1\). Let \(\Sph^1\)  act on itself trivially from right. The momentum map for this action is the identity map \(\Id_{\Sph^1}\colon \Sph^1\to \Sph^1\). Let \(\tau_{\Id_{\Sph^1}}\) be the family of measures along \(\Id_{\Sph^1}\) which consists of point masses. Then \((\Sph^1, \tau_{\Id_{\Sph^1}})\) is a topological correspondence from \(\Z/2\Z\) to \(\Sph^1\); the adjoining function of this correspondence is the constant function \(1\). The topological  correspondence \((\Sph^1, \tau_{\Id_{\Sph^1}})\)  gives the \(\Cst\)\nb-correspondence \(\Cont(\Sph^1)\colon \Cst(\Z/2\Z) \to \Cont(\Sph^1)\). Note that here we make the \(\Cst\)-algebra \(\Cont(\Sph^1)\) a Hilbert module over itself in the obvious way. Since this \(\Cst\)\nb-algebra is unital, we have \(\Comp(\Cont(\Sph^1))=\Mult(\Cont(\Sph^1))=\Cont(\Sph^1)\). Therefore, the \(\Cst\)\nb-correspondence \(\Cont(\Sph^1)\colon \Cst(\Z)\to \Cont(\Sph^1)\) is proper.

In above example, \(\Mlp_{s_{\Sph^1}}\colon \Z/2\Z\times \Sph^1\to \Sph^1 \times \Sph^1\) is the map \((\pm1,z)\mapsto (z,z) \text{ or } (\bar{z},z)\) which is not a homeomorphism, even, onto its image as \((-1)\cdot(1+0i)=1\cdot(1+0i)\) where \(1, -1\in \Z/2\Z\) and \(1+0i\in \Sph^1\). However, on may check that this map is a local homeomorphism onto its image: for \(\pm1\in \Z/2\Z\), \(\Mlp_{{\Sph^1}}|_{\{\pm1\}\times {\Sph^1}}\) is a homeomorphism onto its mage; inverse of this restriction is \((z',z)\mapsto (\frac{z'}{z},z)\) where \((z',z)\in \Img(\Mlp_{{\Sph^1}}|_{\{\pm1\}\times {\Sph^1}})\).

This and similar examples motivate us to add the condition that \emph{\(\Mlp_{s_X}\) is a local homeomorphism onto its image}. And this serves our purpose well.
\medskip

Once the maps which are local homeomorphisms onto their images are in picture,  we have to study these local homeomorphisms: we have to study how does a map of this type behave
\begin{enumerate*}[(i)]
\item with respect extensions of continuous function along it, 
\item with respect to induction of measures along it.
\end{enumerate*}
We study these technical issues in Sections~\ref{sec:measure-th}.

One may put all these conditions and above study together and state the main result in this article. However, we investigate when is the map \(\Mlp_{s_X}\) above a local homeomorphism. This question breaks down into two pieces:
\begin{enumerate*}[(i)]
\item when is \(\Mlp_{s_X}\) locally one-to-one?
\item when is \(\Mlp_{s_X}\) open onto its image?
\end{enumerate*}
The second question has no concrete answer and examples show that \(\Mlp_{s_X}\) is open has to be a part of data. 
The first question leads us to the study of the locally free action of groupoids; this study is done in~Section~\ref{sec:locally-free-actions}. While studying the locally free actions, we also define transitive actions of groupoids in Section~\ref{sec:act-gpd} which facilitate rephrasing a classical condition more theoretically, see Remark~\ref{rem:relation-between-sX-quo-sX-Mlp}.

Since Muhly and Tomford~\cite{Muhly-Tomforde-2005-Topological-quivers} not only \emph{define} a proper topological correspondence but also \emph{characterise} it, we wish to see that up to what level can our definition of a proper correspondence reproduce their results, we study measures which are concentrated on certain sets (Section~\ref{sec:measure-th}).

What do we finally achieve? We define a proper topological correspondence (Definition~\ref{def:prop-corr}) and show that a proper topological correspondence of groupoids equipped with Haar systems produce a proper \(\Cst\)\nb-correspondence of full groupoid \(\Cst\)\nb-algebras (Theorem~\ref{thm:prop-corr-main-thm}). We study \emph{locally free} actions of groupoid: we define locally free and strongly locally free actions of groupoids and show that they do not mean the same (Example~\ref{exa:local-free-and-not-str-loc-free-action}). However, in the case of groups, these notions are same.
While proving this, the example, Example~\ref{exa:local-free-and-not-str-loc-free-action}, shows that a groupoid with discrete fibres need not be {\etale}. Moreover, while studying measures concentrated on a set, we prove Lemma~\ref{lem:conc-measu-equi} which gives different characterisations that when is a measure concentrated on a set; this lemma is not a deep work, but it comfortable to have it. In Section~\ref{sec:exa}, we show that all the definitions of various proper topological correspondences mentioned earlier fit in Definition~\ref{def:prop-corr}. Following is a detail discussion of results in this section.

We discuss the ({\etale}) proper correspondences of spaces and {\etale} groupoids in detail in Section~\ref{sec:ex-space-case} and~\ref{sec:ex-egpd-case}, respectively. We show that in a proper topological correspondence of spaces in the sense of Definition~\ref{def:prop-corr}, the family of measures on the middle space consists of atomic measures and has full support, see~\ref{lem:prop-quiver-labda-is-atomic}. Reader may compare this with ~\cite{Muhly-Tomforde-2005-Topological-quivers}*{Theorem 3.11}. This result generalises to {\etale} groupoids also, Lemma~\ref{lem:etale-corr-full-supp}.
 
Let \(X,Y\) and \(Z\) be spaces, and \(Y\xleftarrow{b} X\xrightarrow{f} Z\) be continuous maps where \(f\) is a local homeomorphism. Then \(X\) carries a continuous family of counting measures, \(\tau_{s_X}\),  along the {\etale} map \(f\) which makes \(X\) into a topological correspondence from \(Y\) to \(Z\). Definition~\ref{def:prop-corr} implies that \(X\) is proper if \(b\) is a proper map.

Section~\ref{sec:ex-egpd-case} discusses the case of {\etale} groupoids. In this section, we study {\etale} topological correspondence of {\etale} groupoids. Let \(X\) be a \(G\)\nb-\(H\)-bispace, where \(G\) and \(H\) are {\etale} groupoids. Assume that the right momentum map \(s_X\) is a local homeomorphism which gives \(X\) a continuous family of atomic measure, \(\tau_{s_X}\), along \(s_X\). If \((X, \tau_{s_X})\) is a topological correspondence from \(G\) to \(H\), then we call \(X\) an {\etale} correspondence. Example of such a morphism is the Hilsum-Skandalis morphism,  see~\cite{Hilsum-Skandalis1987Morphismes-K-orientes-deSpaces-deFeuilles-etFuncto-KK}. Proposition~\ref{prop:gen-etale-corr-iso} shows that any (proper) topological correspondence obtained from the above \(G\)\nb-\(H\)-bispace \(X\) by changing the family of measures on \(X\) is isomorphic to \((X,\tau_{s_X})\). Thus the \(\KK\)\nb-class in \(\KK(\Cst(G),\Cst(H))\) determined by such a bispace does not depend on the family of measures the bispace carries.
 We show that \(X\) is  a proper correspondence if the momentum map for the right action is a proper, see~Proposition~\ref{prop:prop-etale-cor-char}.  

What could we not achieve? The proof of Theorem~\ref{thm:prop-corr-main-thm} uses the cutoff function on a proper groupoid which needs that the space of units is Hausdorff. This forces that the bispace involved in a topological correspondence is Hausdorff. Thus, though we write results for locally compact groupoids when it comes to proving Theorem~\ref{thm:prop-corr-main-thm}, the fact that the groupoids are non-Hausdorff does not play a great role. We do not know examples of groupoid actions which is locally strictly locally free but not strictly locally free, and locally free but not locally locally free, see Figure~\ref{fig:LF-actions-rel} and the discussion below it.

Before proceeding to the summary of the article, we would recommend the reader to assume the function \(\RN\) and an adjoining function of a topological correspondence to be the constant function 1, especially in Section~\ref{sec:prop-corr} and the proof of Theorem~\ref{thm:prop-corr-main-thm} during the first reading. This would reduce the complexity in the proofs and ideas. 
Following is the sectionwise summary.

\noindent
\textbf{Section~\ref{sec:prelim}:}
 we fix some important notation in this section. This section has five subsections. The first one, namely, Section~\ref{sec:measure-th}, discusses extensions of functions along local homeomorphisms which are not necessarily surjective and then some measure theoretic preliminaries. The continuous extensions of a function along a local homeomorphism, which need not be surjective, are used to verbalise one of the conditions for a proper correspondence, Definition~\ref{def:prop-corr}{(iii)}.  In the measure theoretic preliminaries, firstly we discuss constructing measures using local data and restriction of measures. Then, in Lemma~\ref{lem:conc-measu-equi}, we discuss various equivalent ways of saying that a measure is concentrated on a subset. This is used to prove the next lemma, Lemma~\ref{lem:loc-concentrated-measure}, which is one of important result for this article. We use Lemma~\ref{lem:loc-concentrated-measure} to interpret a condition in the definition of a proper correspondence in certain cases. The last part of this section discusses absolute continuity of families of measures.

In the next subsection, Section~\ref{sec:act-gpd}, we discuss free, proper and transitive actions.

In Section~\ref{sec:locally-free-actions}, we introduce different notions of locally free actions of groupoids and examples of some of them. In Proposition~\ref{prop:etale-gpd-loc-free-act}, we show that any action of an {\etale} groupoid is strongly locally free.
 
Section~\ref{sec:prop-group-cutoff} is a revision of some well-known results about proper groupoids and cutoff functions.

Let \((H,\beta)\) be a locally compact groupoid with a Haar system. Let \(X\) be a proper \(H\)\nb-space and \(\lambda\) an \(H\)\nb-invariant family of measures on \(X\). In Section~\ref{comp-op}, we describe \(\TComp(X)\), the \(\Cst\)\nb-algebra of compact operators on the \(\Cst(H,\beta)\).
\smallskip

\noindent
\textbf{Section~\ref{sec:prop-corr}:}
This section starts with three examples which, we expect, may prove helpful to understand the definition of a proper correspondence. The examples are followed by the definition of a proper correspondence, Definition~\ref{def:prop-corr}. Then we make few remarks. The rest of the section is the proof our main theorem, namely, Theorem~\ref{thm:prop-corr-main-thm}. 
\smallskip

\noindent
\textbf{Section~\ref{sec:exa}:}
In this section, we give examples of proper correspondences and show that some well-known proper topological correspondences are proper in our sense. Example~\ref{exa:Tu} shows that a proper locally proper generalised morphism defined by Jean--Louis Tu in~\cite{Tu2004NonHausdorff-gpd-proper-actions-and-K} is a proper topological correspondence. Example~\ref{exa:gpd-equi} shows that an equivalence of groupoid, Example~\ref{exa:prop-quiver} shows that a proper quiver defined by Muhly and Tomford in~\cite{Muhly-Tomforde-2005-Topological-quivers} is a proper topological correspondence. This section discusses {\etale} topological correspondence of spaces and {\etale} groupoids. Proposition~\ref{prop:prop-etale-cor-char} shows that for an {\etale} correspondences, one of the groupoids can be reduced to a space.

\section{Preliminaries}
\label{sec:prelim}

\paragraph{Topological conventions}

We assume that reader is familiar with basic theory of locally compact groupoids (\cite{Renault1980Gpd-Cst-Alg}, \cite{Tu2004NonHausdorff-gpd-proper-actions-and-K}), continuous families of measures (\cite{Renault1985Representations-of-crossed-product-of-gpd-Cst-Alg}, \cite{mythesis}*{Section 2.5.2}) and topological correspondences of locally compact groupoids equipped with Haar systems (\cite{Holkar2017Construction-of-Corr}).

Let \(X\) be a topological space. A subset \(A \subseteq X\) is called quasi-compact if every open cover of \(A\) has a finite subcover, compact if it is quasi-compact and Hausdorff. The space \(X\) is called locally compact if every point has a compact neighbourhood. We call a groupoid locally compact if it is a locally compact topological space and its space of units is Hausdorff. For locally compact space \(X\) \(\Contc(X)_0\) denotes the set of functions \(f\) such that \(f\) vanishes outside a compact set \(V\) and \(f|_v\in\Contc(V)\). And \(\Contc(X)\) is defined as the linear span of \(\Contc(X)_0\). The main definition and theorem in this article are stated for locally compact groupoids and Hausdorff spaces, so reader may simply assume that groupoids are also Hausdorff.

For a function \(f\colon X\to Y\) of sets, \(\Img(f)\) denotes the image of \(f\). Assume that \(X\) and \(Y\) are spaces, and \(f\colon X\to Y\) a continuous map. We say \(f\) is open, if \(f(U)\) is open \emph{in} \(Y\) for every open \(U\subseteq X\). We call \(f\) open onto its image, if \(f(U)\) is open \emph{in} \(f(X)\) for very open \(U\subseteq X\).

 The function \(f\) above is called locally one-to-one if for each \(x\in X\) there is a neighbourhood \(U\subseteq X\) of \(x\) with \(f\vert_U\colon U\to f(U)\) is a one-to-one function. The function \(f\) is called a local homeomorphism if \(f\) is surjective and for each \(x\in X\) there is a neighbourhood \(U\subseteq X\) of \(x\) with \(f\vert_U\colon U\to f(U)\) is a homeomorphism. In this case, \(f\) is an open map.  We call \(f\) a local homeomorphism \emph{onto its image} if \(x\in X\) there is a neighbourhood \(U\subseteq X\) of \(x\) with \(f\vert_U\colon U\to f(U)\) is a homeomorphism; this is equivalent to saying that the function \(f'\colon X\to f(X)\) obtained from \(f\) by restricting its codomain is a local homeomorphism.
 
Let \(X\) be a space. If \(\sim\) is an equivalence relation on \(X\), then \([x]\in X/\!\sim\) denotes the equivalence class of \(x\in X\) under \(\sim\), and \(q_X\colon X\to X/\!\sim\) denotes the quotient map \(q_X(X)=[x]\). Let \(Y\) be another space and \(f\colon X\to Y\) a continuous map. If \(f\) preserves \(\sim\), that is, \(f(x)=f(y)\) for all \(x,y\in X\) with \(x\sim y\), then the universal property of quotient induces a map \(X/\!\sim \to Y\), \([x]\to f(x)\). We denote this map by \([f]\).

Let \(X,Y\) and \(Z\) be spaces, and let \(a\colon X\to Z\) and \(b\colon Y\to Z\) be functions. Then we denote the set \(\{(x,y)\in X\times Y: a(x)=b(y)\}\), which is the fibre product of \(X\) and \(Y\) over \(Z\) along \(a\) and \(b\), by \(X\times_{a,Z,b}Y\) most often. Sometimes, when the map \(a\) and \(b\) are clear, we write \(X\times_ZY\). For \(U\subseteq X\) and \(V\subseteq Y\), \(U\times_{a,Z,b}V\) denotes the set \((U\times V)\cap( X\times_ZY)\). Note that \(U\times_{a,Z,b}V\subseteq X\times_{a,Z,b}Y\) is open if and only if \(U\times V\subseteq X\times_{a,Z,b}Y\) is open, which is, in turn, open if and only if \(U\subseteq X\) and \(V\subseteq Y\) are open. 

Let \(X,Y,Z,a\) and \(b\) be as in the last paragraph. Let \(f\) be a function on \(X\) and \(g\) on \(Y\). Then \(f\otimes g\) is the function on \(X\times Y\) \((x,y)\mapsto f(x)g(x)\). The restriction of \(f\otimes g\) to the fibre product \(X\times_{a,Z,b}Y\) is written as \(f\otimes_Zg\).

Let \(X,Y\) and \(Z\) be spaces, and \(f\colon X\to Z\) and \(g\colon Y\to Z\) continuous maps. Then a basic open set for the product topology on \(X\times Y\) is a product \(U\times V\) where \(U\subseteq X\) and \(V\subseteq Y\) are open. Hence the sets of type \(U\times_{f,Z,g} V= (U\times V)\cap (X\times_{f,Z,g} Y)\), where \(U\subseteq X\) and \(V\subseteq Y\) are open, are basic open sets for the subspace topology of the fibre product \(X\times_{f,Z,g} Y\subseteq X\times Y\).

For a groupoid \(G\), \(r_G\) and \(s_G\) denote its range and source maps, respectively. For a left (right) \(G\)\nb-space, we always assume that the corresponding momentum map is \(r_X\) (respectively, \(s_X\)), with an exception of Example~\ref{exa:prop-quiver}. We denote the fibre product \(G\times_{s_G,\base,r_X}X\) (respectively, \(X\times_{s_X,\base, r_G}G\)) by \(G\times_{\base}X\) (respectively, \(X\times_{\base}G\)). We say \(\gamma\) and \(x\) are composable or \((\gamma, x)\) is a composable pair if \((\gamma,x)\in G\times_{\base}X\); similarly for the right action.

Let \(X\) be a left \(G\)\nb-space and right \(H\)\nb-space for groupoids \(G\) and \(H\). We call \(X\) a \(G\)\nb-\(H\)-bispace if the actions of \(G\) and \(H\) commute in the usual sense, that is, for every composable pairs \((\gamma, x)\in G\times_{\base}X\) and \((x,\eta)\in X\times_{\base[H]}H\) we have that
\begin{enumerate*}[(i)]
\item \((\gamma x, \eta)\in X\times_{\base[H]}H\)
  and \((\gamma, x\eta )\in G\times_{\base}X\),
\item \((\gamma x)\eta=\gamma(x\eta)\).
\end{enumerate*}

Let \((H,\beta)\) be a locally compact groupoid with a Haar system and \(X\) a right \(H\)\nb-space. Then there is continuous family of measures with full support \(\beta_X=\{\beta^{[x]}_X\}_{[x]\in X/H}\) along the quotient map \(X\to X/H\) which is given by
\begin{equation}
  \label{eq:measures-along-the-quotient}
\int_{X} f\;\dd\beta^{[x]}_X \defeq \int_{G} f(x\eta)\;\dd\beta^{s_X(x)}(\eta)
\end{equation}
 for \([x]\in X/H\) and \(f\in \Contc(X)\).
 See~\cite{mythesis}*{Proposition 1.3.21} for the proof.

Let \(\mathcal{A}\) be a category and let \(\mathcal{A}^0\) denote its class of objects. By \(x\in\!\in\mathcal{A}\) we mean that \(x\) is an object in \(\mathcal{A}\).

The symbols \(\R^*\) and \(\C^*\) denote, respectively, the set of nonzero real and complex numbers. And \(\R^-\) and \(\R^+\) denote is the set of negative and positive real numbers, respectively.
\subsection{Measures concentrated on sets}
\label{sec:measure-th}

Our reference for measure theory is Bourbaki~\cite{Bourbaki2004Integration-I-EN}. For a topological space \(X\), let \(\Bor(X)\) denote the set of Borel functions on \(X\). Given \(Y\subseteq X\), \(\chi_Y\) denotes the characteristic function of the set \(Y\). In this article, all the measures are assumed to be positive, Radon and \(\sigma\)\nb-finite. Now we introduce some notation.
\begin{notation}[Notation and remarks]
\label{not:1}
 \begin{enumerate}[leftmargin=*]
 \item Let \(X\) be a space and \(V,U\subseteq X\) with \(U\subseteq V\). Let \(f\) be a function on \(U\). Then we write \(0^V(f)\) for the function on \(V\) which equals \(f\) on \(U\) and zero on \(V-U\). 

Let \(X\) be locally compact space and \(U\) an open Hausdorff subset of it. Then define the set
\[
 \Contc(X, U)=\{f\in \Contc(X): \supp(f)\subseteq U \}.
\]
When \(X\) is Hausdorff one may identify \(\Contc(X,U)\) with \(\Contc(U)\) as follows: if \(g\) is in \(\Contc(X,U)\), then the restriction of \(g\) to \(U\) lies in \(\Contc(U)\). On the other hand, if \(f\) is in \(\Contc(U)\), then \(0^X(f)\) is in \(\Contc(X,U)\). These two process are inverses of each other. We shall frequently use this identification.

\item Let \(X\) and \(Y\) be locally compact Hausdorff spaces, and \(\phi\colon X\to Y\) a map which is open onto its image. For an open set \(U\subseteq X\), define
 \[
\ExtS_\phi(U)=\{U'\subseteq Y: U'\text{ is open in }Y\text{ and } U'\cap \phi(X)=\phi(U)\}
\]
(here an everywhere else, \(\Ext\) stands for `extension'). Note that for \(U\neq\emptyset\), \(\ExtS_\phi(U)\) is not empty: since \(\phi(U)\subseteq \phi(X)\) is open, there is an open set \(U'\subseteq Y\) with \(\phi(U)=U'\cap\phi(X)\). We call an element of \(\ExtS_\phi(U)\) an extension of \(U\) via \(\phi\), or simply an extension of \(U\).

\item  A set is called cocomapct if it has compact closure. Let \(X\) and \(Y\) be locally compact Hausdorff spaces, and \(\phi\colon X\to Y\) a local homeomorphism onto its image. Then \(\phi\) is an open map onto its image. Let
  \begin{align*}
\phi^{\Ho}&\defeq\{U\subseteq X : U \text{ is open and } \phi|_{U} \text{ is a homeomorphism}\}\\
\phi^{\HoC}&\defeq\{U\subseteq X : U  \text{ is cocompact, }U\in\phi^{\Ho} \text{ and } \phi|_{\overline{U}} \text{ is a homeomorphism}\}\\
  \end{align*}
Then \(\phi^{\Ho}\neq \emptyset\) is an open cover of \(X\) for \(X\neq \emptyset\). Given \(U\in\phi^{\Ho}\), we write \(\phi|_U\inverse\) for \((\phi|_U)\inverse\). Note that \(\phi|_U\inverse\colon \phi(U)\to U\) is a homeomorphism. The map \(\phi|_U\inverse\) induces an obvious isomorphism \(\Contc(U)\to \Contc(\phi(U))\) which sends a function \(f\in \Contc(U)\) to the function \(f\circ \phi|_{\phi(U)}\inverse\) in \(\Contc(\phi(U))\).

 Since \(X\) is locally compact, for any \(U\in\phi^{\Ho}\) and \(x\in U\), there is an open cocompact neighbourhood \(V\) of \(x\) with \(\overline{V}\subseteq U\). Hence \(\phi^{\HoC}\neq \emptyset\). Moreover, \(\phi^{\Ho}\) and \(\phi^{\HoC}\) are open covers of \(X\) which are closed under intersections of sets.
\item Let \(X,Y\) and \(\phi\) be as in (3) above. Let \(U\in \phi^{\Ho}\). For \(f\in \Contc(X;U)\) define the set
  \[
    \Ext_\phi(f) =\{f'\in \Contc(Y,U'): U'\in \ExtS_\phi(U)\text{ and } f'= f\circ\phi|_{U}\inverse \text{ on } \phi(U)\}.
\]
Since \(\phi|_U\) is a homeomorphism onto its image, we may write:
\[
    \Ext_\phi(f) =\{f'\in \Contc(Y,U'): U'\in \ExtS_\phi(U)\text{ and } f'\circ \phi|_U= f \text{ on } U\}.
\]
Lemma~\ref{lem:loc-homeo-fn-ext-restr} says that for \(U\in \phi^{\HoC}\) and \(f\in \Contc(X,U)\), \(\Ext_\phi(f)\) is nonempty.
We call an element of \(\Ext_\phi(f)\) \emph{an extension of \(f\) to \(Y\) via \(\phi\)} or, sometimes, simply an extension. Clearly, any two extensions of \(f\) agree on \(\phi(U)\) and equal \(f\circ \phi|_U\inverse\).
Since the intersection of any two sets in \(\ExtS_\phi\) with \(\phi(X)\) is \(\phi(U)\), we have 
\(f'|_{\phi(X)}=f''|_{\phi(X)}=0^{\phi(X)}(f\circ \phi|_U\inverse)\) for any two extensions \(f',f''\in \Ext_\phi(f)\).

For \(U\in \phi^{\Ho}\) , let \(U'\in \ExtS_\phi(U)\). For \(g\in \Contc(Y, U')\) we define the \emph{function} \(\Res^\phi(g)\) on \(X\) to be \(0^Y(g\circ \phi|_U)\), that is,
\[
\Res^\phi(g)= \begin{cases}  g\circ \phi|_U  &\mbox{on } U, \\
0 & \mbox{on } X-U \end{cases} 
\]
The function \(\Res^\phi(g)\) is continuous on \(U\). Lemma~\ref{lem:restr-is-Cc} says that \(\Res^\phi(g)\in \Contc(X;U)\) when \(U\in \phi^{\HoC}\). Here \(\Res\) stands for `restriction'.
\item Let \(X,Y\) and \(\phi\) be as in (3) above. Analogous to \(\Contc(X,U)\), for \(U\subseteq X\), we define \(\Bor(X,U)\) as the set of Borel measurable functions on \(X\) whose support lies in \(U\). For \(U\in \phi^{\Ho}\) and \(f\in \Bor(X,U)\), we define \(\ExtB_\phi(f)\) analogous to \(\Ext_\phi(f)\) in (4) above but with \(\Contc(Y,U')\) replaced by \(\Bor(Y,U')\). Finally, observe that \(\Res^\phi(g)\) makes sense for a Borel function \(g\in \Bor(Y,U')\) where \(U'\in\ExtS_\phi(U)\).
 \end{enumerate}
\end{notation}
\smallskip

Now we start preparing for Lemma~\ref{lem:restr-is-Cc}. Let \(X\) be a space, and \(Y\) and \(A\) its subspaces with \(A\subseteq Y\). The set \(A\) is (quasi-)compact in \(Y\) if and only if it is so in \(X\). To prove the \emph{if} part, let \(\{{U_j}'\}\) be an open cover of \(A\) in \(Y\). For each \({U_j}'\), there is an open set \({U_j}'\) of \(X\) with \({U_j}'={U_j}\cap Y\). Since \(\{{U_j}'\}\) cover \(A\), so do \(\{{U_j}\}\). Let \({{U_j}_1},\dots,{{U_j}_n}\) be a finite cover of \(A\) in \(X\), then \({{U_j}_1}',\dots,{{U_j}_1}'\) is a finite subcover of \(A\) in \(Y\).

Conversely, let \(\{{U_j}\}\) be an open cover of \(A\) in \(X\). Then \({U_j}'\defeq {U_j}\cap Y\) are open sets in \(Y\) which cover \(A\). Since \(A\) is compact in \(Y\), we may choose a finite subcover, say \({{U_j}_1}'\dots,{{U_j}_n}'\). Then \({{U_j}_1},\dots,{{U_j}_n}\) is the finite subcover of \(A\) in \(X\).
 We need \emph{if} part observation in the final argument of Lemma~\ref{lem:restr-is-Cc}. Even later this observation shall be used.

Let \(X\) be a space, \(Y\) and \(W\) its subspaces with \(Y\supseteq W\). In the next lemma, \(\overline{W}\) and \(\overline{W}^Y\) denote the closures of \(W\) in \(X\) and \(Y\), respectively. In general, \(\overline{W}\) denotes the closure of \(W\) in the biggest space in the discussion that contains \(W\). It is a basic result in point-set topology that \(\overline{W}^Y= \overline{W}\cap Y\).
\begin{lemma}
  \label{lem:restr-is-Cc}
Let \(X\) and \(Y\) be locally compact Hausdorff spaces, and \(\phi\colon X\to Y\) be a local homeomorphism onto its image. Let \(U\in \phi^{\HoC}\) and \(V\in \ExtS_\phi(U)\). Then for \(g\in \Contc(Y,V)\), \(\Res^\phi(g)\in \Contc(X;U)\). A similar claim holds for \(g\in \Bor(Y,V)\), that is, for \(g\in \Bor(Y,V)\), \(\Res^\phi(g)\in \Bor(X;U)\).
\end{lemma}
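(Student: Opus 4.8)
The goal is to verify that $\Res^\phi(g)$ is a compactly supported continuous function whose support sits inside $U$; the analogous Borel statement will then follow by the same bookkeeping with continuity dropped. The plan is to isolate one elementary but decisive fact and feed it into a continuity (pasting) argument and a support (compactness) argument.

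First I would record the observation that drives everything: \emph{if $x\in\overline U$ and $\phi(x)\in V$, then $x\in U$.} Indeed, $\phi(x)\in\phi(\overline U)\subseteq\phi(X)$, so $\phi(x)\in V\cap\phi(X)$, which by the defining property of $V\in\ExtS_\phi(U)$ equals $\phi(U)$; hence $\phi(x)=\phi(u)$ for some $u\in U\subseteq\overline U$, and since $\phi|_{\overline U}$ is injective (because $U\in\phi^{\HoC}$ makes $\phi|_{\overline U}$ a homeomorphism), we get $x=u\in U$. This is the only place where the two hypotheses ``$V$ is an extension of $U$'' and ``$\phi$ restricts to a homeomorphism on all of $\overline U$'' interact, and it is the step I expect to be the main obstacle: everything else is routine once it is in hand.

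For continuity, I would note that $h\defeq g\circ\phi|_{\overline U}$ is continuous on the closed set $\overline U$, being the composite of the homeomorphism $\phi|_{\overline U}\colon\overline U\to\phi(\overline U)$ with the continuous $g$. The key observation shows $h$ vanishes on $\overline U-U$: if $x\in\overline U-U$ had $h(x)\neq 0$, then $\phi(x)\in\{g\neq 0\}\subseteq\supp(g)\subseteq V$, forcing $x\in U$, a contradiction. Writing $X=\overline U\cup(X-U)$ as a union of two closed sets whose overlap is $\overline U-U$, the function that equals $h$ on $\overline U$ and $0$ on $X-U$ is well defined (both pieces vanish on the overlap) and continuous by the pasting lemma; it agrees with $\Res^\phi(g)$ at every point. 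Thus $\Res^\phi(g)\in\Cont(X)$.

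For the support, I would set $C\defeq(\phi|_{\overline U})^{-1}\bigl(\supp(g)\cap\phi(\overline U)\bigr)$. Since $\overline U$ is compact (as $U$ is cocompact), $\phi(\overline U)$ is compact; intersecting the closed set $\supp(g)$ with it yields a compact set, whose image under the continuous inverse $(\phi|_{\overline U})^{-1}$ is again compact, so $C$ is compact, and the key observation gives $C\subseteq U$. Because $\{\Res^\phi(g)\neq 0\}\subseteq C$ and $C$ is closed, $\supp(\Res^\phi(g))\subseteq C\subseteq U$, and as a closed subset of the compact set $C$ it is compact; hence $\Res^\phi(g)\in\Contc(X;U)$. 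Finally, for $g\in\Bor(Y,V)$ the function $\Res^\phi(g)$, equal to $g\circ\phi|_U$ on $U$ and to $0$ on $X-U$, is Borel, since $g\circ\phi|_U$ is Borel on the open set $U$ (preimages of Borel sets under the continuous $\phi|_U$ are Borel) and extension by zero across the Borel decomposition $X=U\cup(X-U)$ preserves measurability; the identical compact-set computation, now using only that $\supp(g)$ is closed, places its support inside $U$, so $\Res^\phi(g)\in\Bor(X;U)$.
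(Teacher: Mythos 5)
Your proof is correct. It differs from the paper's in where the work is done and in which hypotheses carry the load. The paper argues \emph{downstairs} in \(Y\): it first records the set identity \(A\cap\phi(\overline{U})=A\cap\phi(U)\) for every \(A\subseteq V\) (a consequence of \(V\cap\phi(X)=\phi(U)\) alone), uses it to show that \(\supp(g|_{\phi(U)})=\supp(g|_{\phi(\overline{U})})\) is a closed subset of the compact set \(\phi(\overline{U})\), and then transfers the conclusion back to \(X\) through the homeomorphism \(\phi|_U\) and the identification \(\Contc(X,U)\iso\Contc(U)\) of Notation~\ref{not:1}(1). You argue \emph{upstairs} in \(X\): your pointwise key observation (\(x\in\overline{U}\), \(\phi(x)\in V\) implies \(x\in U\)) genuinely uses the injectivity of \(\phi|_{\overline{U}}\), i.e.\ the full strength of \(U\in\phi^{\HoC}\), which the paper's set identity never invokes; this observation then feeds a pasting-lemma argument for continuity along the closed decomposition \(X=\overline{U}\cup(X-U)\) and a direct compactness argument for the support via the compact set \(C=(\phi|_{\overline{U}})\inverse(\supp(g)\cap\phi(\overline{U}))\subseteq U\). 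Both routes exploit the same interaction between the extension property of \(V\) and the compactness of \(\overline{U}\), so neither is more general in substance (though the paper's, read closely, needs only continuity of \(\phi\) on \(\overline{U}\)); what yours buys is self-containedness --- the continuity of the zero-extension is proved rather than delegated to the \(\Contc(U)\iso\Contc(X,U)\) identification --- and a more careful treatment of the Borel case, where the containment \(\supp(\Res^\phi(g))\subseteq U\) requires exactly your compact-set argument, a point the paper glosses over with ``follows easily from the definition''.
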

\begin{proof}
The Borel part of the lemma follows easily from the definition of \(\Res^\phi(g)\). We deal with the continuous case. Assume that \(g|_{\phi(U)}\in \Contc(\phi(U))\). Then \(\Res^\phi(g)\in \Contc(X;U)\). Hence we shall show that \(g|_{\phi(U)}\in \Contc(\phi(U))\). We write \(Z\) for \(\phi(X)\) in the discussion that follows.

We need the following observation: for any subset \(A\) of \(V\), \(A\cap \phi(\overline{U})=A\cap \phi(U)\).
The proof of the observation is as follows: it is obvious that \(A\cap \phi(\overline{U})\supseteq A\cap \phi(U)\). One the other hand, \(A\cap \phi(\overline{U})=(A\cap V)\cap(\phi(\overline{U})\cap Z)\subseteq A\cap (V\cap Z)=A\cap \phi(U)\).

Now the fact that \(g\inverse(\C^*)\subseteq \supp(g)\subseteq V\) and the observation in the last paragraph together give us that \(g\inverse(\C^*)\cap \phi(U)=g\inverse(\C^*)\cap \phi(\overline{U})\). But
\begin{align*}
  g\inverse(\C^*)\cap \phi(U)&= \{y\in \phi(U): g(y)\neq 0 \}=g|_{\phi(U)}\inverse(\C^*) \text{ and}\\
g\inverse(\C^*)\cap \phi(\overline{U})&= \{y\in \phi(\overline{U}): g(y)\neq 0 \}=g|_{\phi(\overline{U})}\inverse(\C^*).
\end{align*}
Thus \(g|_{\phi(U)}\inverse(\C^*)=g|_{\phi(\overline{U})}\inverse(\C^*)\); denote either of these sets by \(B\). Then \(\supp(g|_{\phi(U)})=\overline{B}^{\phi(U)}\) and \(\supp(g|_{\phi(\overline{U})})=\overline{B}^{\phi(\overline{U})}\). Furthermore, 
\begin{equation}
\label{eq:res-is-cc-1}
\supp(g|_{\phi(U)})=\overline{B}^{\phi(\overline{U})} \cap \phi(U).
\end{equation}
 Now we claim that \(\overline{B}^{\phi(\overline{U})} \subseteq V\).  This is because \(g|_{\phi(\overline{U})}\inverse(\C^*)\subseteq g\inverse(\C^*)\) and hence
\[\overline{B}^{\phi(\overline{U})}=\overline{g|_{\phi(\overline{U})}\inverse(\C^*)}\cap \phi(\overline{U})\subseteq \overline{g|_{\phi(\overline{U})}\inverse(\C^*)}\subseteq \overline{g\inverse(\C^*)}=\supp(g)\subseteq V.
\]
 Now Equation~\eqref{eq:res-is-cc-1} and the observation made at the beginning of the proof together yield that
\[
\supp(g|_{\phi(U)})=\overline{B}^{\phi(\overline{U})} \cap \phi(U)=\overline{B}^{\phi(\overline{U})} \cap \phi(\overline{U})=\overline{B}^{\phi(\overline{U})}=\supp(g|_{\phi(\overline{U})}).
\]
Note that since \(\overline{B}^{\phi(\overline{U})}\) is a closed subset of the compact set \(\phi(\overline{U})\), \(\overline{B}^{\phi(\overline{U})}\) is compact in \(\phi(\overline{U})\). This along with the observation prior to the statement of this lemma implies that \(\overline{B}^{\phi(\overline{U})}=\supp(g|_{\phi(U)})\) is compact in \(Y\) as well as \(\phi(U)\).
\end{proof}

\begin{lemma}
  \label{lem:loc-homeo-fn-ext-restr}
Let  \(X\) and \(Y\) be locally compact spaces with \(Y\) Hausdorff. Let \(\phi\colon X\to Y\) be a local homeomorphism onto its image. Let \(U\in\phi^{\HoC}\) and \(U'\in \ExtS_\phi(U)\). Then
\begin{enumerate}[i)]
\item  for \(f\in\Contc(X;U)\), \(\Ext_\phi(f)\) is nonempty;
\item  for \(f\in \Contc(X;U)\) and \(f'\in \Ext_\phi(f)\), \(\Res^\phi(f')=f\);
\item  for \(g\in \Contc(Y,U')\), \(g\in \Ext(\Res^\phi(g))\).
\end{enumerate}
Similar claims for hold the measurable case.
\end{lemma}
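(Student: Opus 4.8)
The plan is to dispose of parts (ii) and (iii) by directly unwinding the definitions of \(\Res^\phi\) and \(\Ext_\phi\), and to concentrate the real work on the existence statement (i), which I would prove with a Tietze--Urysohn argument. The cocompactness hypothesis \(U\in\phi^{\HoC}\) is used only in (i).

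For (ii), recall that membership \(f'\in\Ext_\phi(f)\) is exactly the statement \(f'\circ\phi|_U=f\) on \(U\), while by definition \(\Res^\phi(f')\) equals \(f'\circ\phi|_U\) on \(U\) and vanishes on \(X\setminus U\). Hence \(\Res^\phi(f')=f\) on \(U\); and since \(\supp(f)\subseteq U\) forces \(f=0\) on \(X\setminus U\), the two functions agree on all of \(X\). For (iii), I would first invoke Lemma~\ref{lem:restr-is-Cc} (with \(V=U'\)) to know \(\Res^\phi(g)\in\Contc(X;U)\), so that \(\Ext_\phi(\Res^\phi(g))\) is meaningful. Since \(g\in\Contc(Y,U')\) and \(U'\in\ExtS_\phi(U)\), the only thing to verify is \(g=\Res^\phi(g)\circ\phi|_U\inverse\) on \(\phi(U)\); but \(\Res^\phi(g)=g\circ\phi|_U\) on \(U\), and post-composing with \(\phi|_U\inverse\) on \(\phi(U)\) returns \(g\) there. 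This gives \(g\in\Ext_\phi(\Res^\phi(g))\).

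The substance is (i). The obstruction is that the natural candidate \(f\circ\phi|_U\inverse\) is only defined on \(\phi(U)\), which is open in \(\phi(X)\) but in general neither open nor closed in \(Y\); in particular \(\phi(U)\) need not lie in the interior of its closure, so extension by zero fails to be continuous. I would set \(K\defeq\phi(\overline{U})\), which is compact, hence closed in the Hausdorff space \(Y\), and on which \(h\defeq f\circ\phi|_{\overline{U}}\inverse\) is a well-defined continuous function (here \(\phi|_{\overline{U}}\) is a homeomorphism by the definition of \(\phi^{\HoC}\)). Since \(\phi(\overline{U})\subseteq\phi(X)\) and \(U'\cap\phi(X)=\phi(U)\), one checks \(K\cap U'=\phi(\overline{U})\cap\phi(U)=\phi(U)\). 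Because \(\supp(f)\) is a compact subset of \(U\), its image \(C\defeq\phi(\supp f)\) is compact, contained in \(\phi(U)=K\cap U'\subseteq U'\), and the set \(\{h\neq0\}\) is contained in \(C\).

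To build the extension I would apply Urysohn's lemma to the compact set \(C\) and the open set \(U'\) to obtain \(\psi\in\Contc(Y)\) with \(0\le\psi\le1\), \(\psi\equiv1\) on \(C\) and \(\supp(\psi)\subseteq U'\); and the Tietze extension theorem (applicable since \(K\) is a compact, hence closed, subset of the locally compact Hausdorff space \(Y\)) to extend \(h\) to \(H\in\Cont(Y)\). Setting \(f'\defeq\psi H\) gives \(f'\in\Contc(Y,U')\) because \(\supp(f')\subseteq\supp(\psi)\subseteq U'\) is compact. Finally, for \(y\in\phi(U)\) one has \(H(y)=h(y)\) and \(f'(y)=\psi(y)h(y)\); if \(h(y)\neq0\) then \(y\in C\) so \(\psi(y)=1\) and \(f'(y)=h(y)\), while if \(h(y)=0\) then \(f'(y)=0=h(y)\). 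Hence \(f'=h=f\circ\phi|_U\inverse\) on \(\phi(U)\) (the two inverses agree there), so \(f'\in\Ext_\phi(f)\) and (i) holds. The measurable versions are easier: using that \(\phi(U)=\phi(\overline{U})\cap U'\) is Borel in \(Y\), the extension by zero of \(f\circ\phi|_{\overline{U}}\inverse\) off \(\phi(\overline{U})\) already lies in \(\ExtB_\phi(f)\), and the analogues of (ii) and (iii) are unchanged. I expect the only delicate point to be the continuity of the extension in (i), which is precisely what the bump \(\psi\) and the Tietze extension are arranged to secure.
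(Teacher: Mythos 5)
Your proposal is correct and follows essentially the same route as the paper: parts (ii) and (iii) by unwinding the definitions (with Lemma~\ref{lem:restr-is-Cc} making \(\Ext_\phi(\Res^\phi(g))\) meaningful), and part (i) by Tietze-extending \(f\circ\phi|_{\overline{U}}\inverse\) from the compact set \(\phi(\overline{U})\) and multiplying by an Urysohn bump that is \(1\) on \(\phi(\supp f)\) and supported in the extension set, with the Borel case handled by extension by zero. The only cosmetic difference is that the paper takes the Tietze extension to be compactly supported and calls the bump \(w\), whereas you keep a plain continuous extension \(H\) and let the bump \(\psi\) supply compact support; also note that cocompactness is used not only in (i) but implicitly in (iii), since Lemma~\ref{lem:restr-is-Cc} requires \(U\in\phi^{\HoC}\).
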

\begin{proof}
Consider the measurable case first. 
i): For \(f\in \Bor(X;U)\), \(0^Y(f\circ\phi|_U\inverse)\) lies in \(\ExtB_\phi(f)\). (ii) and (iii) can be proved on the similar lines as the continuous case. Now we prove the claims for the continuous case.

\noindent i): We know that \(\phi(\overline{U})\subseteq Y\) is compact, therefore,
\[
f\circ\phi|_{\phi(\overline{U})}\inverse\colon \phi(\overline{U})\to \C
\]
is a well-defined continuous function. We extend \(f\circ\phi|_{\phi(\overline{U})}\inverse\) to a function \(\tilde f\in\Contc(Y)\) with \(f=\tilde f\) on \(\phi(\overline{U})\) using Tietze's extension theorem.

Let \(V\in \ExtS_\phi(U)\). Then \(\phi(\supp(f))\subseteq V\) is compact, since \(\phi(\supp(f))\) is compact in the subspace \(\phi(U)\) of \(V\). Let \(w\in \Contc(Y)\) be a function which is \(1\) on the compact set \(\phi(\supp(f))\) and has support in \(V\). Then \(f'\defeq \tilde f w\in\Contc(Y)\) is an element of \(\Ext_\phi(f)\): it is plain that \(f'|_{\phi(U)}=\tilde f|_{\phi(U)}=f\circ {\phi|}_{U}\inverse\). Moreover,
\[
\left(\supp(f')\cap \phi(X)\right) \subseteq \left((\supp(\tilde f)\cap\supp(w))\cap \phi(X)\right)\subseteq\,\supp(w)\cap \phi(X)\subseteq \phi(U).
\]
Thus \(f'\in \Ext_\phi(f)\).

\noindent ii): If \(f'\in \Ext_\phi(f)\), then by definition \(f'\circ \phi|_U=f\) on \(\phi(U)\); hence \(\Res^\phi(f') = f'\circ \phi|_U=f\) on \(\phi(U)\). Since \(f\in \Contc(X;U)\), \(f=0\)  on \(X-U\), and so is \(\Res^\phi(f')\).

\noindent iii): Lemma~\ref{eq:res-is-cc-1} says that \(\Res^\phi(g)\in \Contc(X;U)\), hence we may define \(\Ext_\phi(\Res^\phi(g))\). The rest follows directly from the definitions of \(\Res^\phi(g)\) and \(\Ext_\phi(f)\).
\end{proof}

\paragraph{\textbf{Restriction of a measure, and construction of a measures using local data:}}

 Let \(X\) be a locally compact, Hausdorff space and \(U\) an open subset of \(X\). Identify \(\Contc(X,U)\) with \(\Contc(U)\) in the obvious way, see~\ref{not:1}(1). Recall from elementary measure theory that the measure \(\lambda\colon \Contc(X)\to\R\) on \(X\), when restricted to the subspace \(\Contc(X,U)\iso\Contc(U)\) gives a measure on \(U\) which is called the restriction of the \(\lambda\) to \(U\). We denote this restriction by \(\lambda|_U\).

In general, the measure \(\lambda\) can be restricted  not only to an open set but also to any Borel set. Let \((X,\lambda)\) be a Borel measure space and \(Y\) a Borel subset of \(X\). Then any Borel subset \(U\subseteq Y\) is of type \(U=U'\cap Y\) for a Borel \(U'\subseteq X\). For \(Y\subseteq X\) as above, define \(\lambda|_Y\), the restriction of \(\lambda\) to \(Y\), by \(\lambda|_Y(U)=\lambda(U'\cap Y)\) where \(U\subseteq Y\) and \(U'\subseteq X\) are  Borel sets with \(U=U'\cap Y\). We shall often use the following result from elementary measure theory: let \(f\) be an integrable function on \(X\) then
\[
\int_X f|_Y\,\dd\lambda\defeq\int_Y f\,\dd\lambda=\int_Y f\,\dd\lambda|_Y.
\]

Let \(X\) and \(\lambda\) be as in the last paragraph, and let \(U\) and \(V\) be Borel subsets of \(X\) with \(V\subseteq U\). It  is easy to check that the restricted measures satisfy the equality \(\lambda|_V=(\lambda|_U)|_V\). We will not need such a generalised notion of restricted measures, but we shall need the notion of restriction of a measure to a locally compact space, see~\cite{Bourbaki2004Integration-I-EN}*{Nr.\,7, {\S}5, IV.\,74}. 

 The following proposition gives a criterion to extend measures defined on elements of an open cover of \(X\) to whole of the space.
\begin{proposition}[Proposition~\cite{Bourbaki2004Integration-I-EN}*{Chpater III, \S 2, 1, Proposition 1}]
  \label{prop:bourbaki-pasting-measures}
Let \(\{Y_\alpha\}_{\alpha\in A}\) be an open cover of \(X\), \(X\) a locally compact Hausdorff space, and suppose given, on each subspace \(Y_\alpha\), a measure \(\mu_\alpha\), in such a way that for every pair \((\alpha,\beta)\), the restriction of \(\mu_\alpha\) and \(\mu_\beta\) to \(Y_\alpha\cap Y_\beta\) are identical. Under these conditions, there is one and only one measure \(\mu\) on \(X\) whose restriction to \(Y_\alpha\) is equal to \(\mu_\alpha\) for every index \(\alpha\).
\end{proposition}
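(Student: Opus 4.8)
The plan is to work at the level of positive linear functionals on $\Contc$, which is how a Radon measure on a locally compact Hausdorff space is described (indeed, in the Bourbaki framework a measure \emph{is} such a functional), and to assemble the global functional $\mu$ from the local data $\{\mu_\alpha\}_{\alpha\in A}$ using partitions of unity subordinate to the cover. The central observation is that any $f\in\Contc(X)$ has compact support, so $\supp(f)$ meets only finitely many members of the cover and can be covered by finitely many $Y_{\alpha_1},\dots,Y_{\alpha_n}$; choosing a partition of unity $h_1,\dots,h_n$ with $\supp(h_i)\subseteq Y_{\alpha_i}$ and $\sum_i h_i=1$ on $\supp(f)$ lets us decompose $f=\sum_i h_i f$, where each $h_i f\in\Contc(X,Y_{\alpha_i})\cong\Contc(Y_{\alpha_i})$. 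Existence of such subordinate partitions is guaranteed by local compactness and Hausdorffness of $X$, so no extra hypotheses are needed.

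First I would dispose of uniqueness. If $\mu$ and $\mu'$ both restrict to every $\mu_\alpha$, then for $f$ as above $\mu(f)=\sum_i\mu(h_i f)=\sum_i\mu_{\alpha_i}(h_i f)=\sum_i\mu'(h_i f)=\mu'(f)$, since each $h_if$ is supported in $Y_{\alpha_i}$, where both functionals agree with $\mu_{\alpha_i}$. Thus $\mu$ is forced, and this same formula is the candidate for existence: set $\mu(f)\defeq\sum_i\mu_{\alpha_i}(h_i f)$.

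The work is then to show this is well defined, i.e.\ independent of the chosen finite subcover and partition. Given a second partition $k_1,\dots,k_m$ subordinate to $Y_{\beta_1},\dots,Y_{\beta_m}$ with $\sum_j k_j=1$ on $\supp(f)$, I would insert it into the first and compute $\sum_i\mu_{\alpha_i}(h_i f)=\sum_{i,j}\mu_{\alpha_i}(k_j h_i f)$; since $k_j h_i f$ is supported in $Y_{\alpha_i}\cap Y_{\beta_j}$, the compatibility hypothesis gives $\mu_{\alpha_i}(k_j h_i f)=\mu_{\beta_j}(k_j h_i f)$, and the symmetric computation shows this double sum also equals $\sum_j\mu_{\beta_j}(k_j f)$. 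This is the main obstacle, and it is precisely where the assumption that the $\mu_\alpha$ agree on overlaps is used. Linearity then follows by evaluating two functions against a common partition of unity, and positivity follows by taking the $h_i$ nonnegative, so that $f\ge 0$ forces each $\mu_{\alpha_i}(h_i f)\ge 0$.

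Finally I would verify $\mu|_{Y_\alpha}=\mu_\alpha$. For $f\in\Contc(Y_\alpha)$, viewed inside $\Contc(X,Y_\alpha)$, choose a partition $h_i$ as above; each $h_i f$ is supported in $Y_\alpha\cap Y_{\alpha_i}$, so compatibility gives $\mu_{\alpha_i}(h_i f)=\mu_\alpha(h_i f)$, whence $\mu(f)=\sum_i\mu_\alpha(h_if)=\mu_\alpha\!\left(\sum_i h_i f\right)=\mu_\alpha(f)$, because $\sum_i h_i=1$ on $\supp(f)$. Together with the uniqueness argument above, this establishes both the existence and the uniqueness of $\mu$.
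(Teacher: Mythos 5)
Your proof is correct. Note that the paper does not actually prove this proposition --- it is quoted directly from Bourbaki (Integration, Chapter III, \S 2, No.\,1, Proposition 1) and used as a black box --- so there is no in-paper argument to compare against; your partition-of-unity construction is essentially the standard proof, and indeed the one Bourbaki gives. The structure is sound: uniqueness via the decomposition \(f=\sum_i h_i f\) with each \(h_i f\in\Contc(X,Y_{\alpha_i})\); existence by taking that formula as the definition; well-definedness by inserting a second partition \(\{k_j\}\) and using the compatibility hypothesis on \(Y_{\alpha_i}\cap Y_{\beta_j}\), which is exactly where the agreement-on-overlaps assumption enters; and the verification \(\mu|_{Y_\alpha}=\mu_\alpha\) again via compatibility. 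One small wording slip: a compact set need \emph{not} meet only finitely many members of an open cover (the cover is not assumed locally finite --- consider the cover of \(\R\) by all open intervals); what compactness gives, and what your argument actually uses, is only that \(\supp(f)\) is \emph{covered} by finitely many \(Y_{\alpha_1},\dots,Y_{\alpha_n}\). With that clause deleted, the argument stands as written, including the remark that positivity of the functional suffices for it to be a Radon measure in the Bourbaki framework.
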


\begin{lemma}
  \label{lem:pulling-and-pasting-measures-1}
Let \(X\) and \(Y\) be locally compact, Hausdorff spaces and let \(\lambda\) be a measure on \(Y\). If \(\phi\colon X\to Y\) be a local homeomorphism (onto) \(Y\), then \(\lambda\) induces a unique measure \(\phi^*(\lambda)\) on \(X\) such that for every \(f\in \Contc(X,U)\), where \(U\in\phi^{\Ho}\), we have
\begin{equation}
  \label{eq:pullback-measure-1}
\int_X f\,\phi^*(\lambda)=\int_Y 0^Y(f\circ \phi|_U\inverse) \;\dd\lambda. 
\end{equation}
\end{lemma}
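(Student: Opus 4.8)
The plan is to build \(\phi^*(\lambda)\) by pulling \(\lambda\) back on each member of the open cover \(\phi^{\Ho}\) and then gluing the resulting local measures by means of Proposition~\ref{prop:bourbaki-pasting-measures}. Recall from Notation~\ref{not:1}(3) that, as \(\phi\) is a local homeomorphism, \(\phi^{\Ho}\) is an open cover of \(X\) that is closed under intersections, and for each \(U\in\phi^{\Ho}\) the restriction \(\phi|_U\colon U\to\phi(U)\) is a homeomorphism onto the open set \(\phi(U)\subseteq Y\). First I would define, for each \(U\in\phi^{\Ho}\), a measure \(\mu_U\) on \(U\) as the image of the restricted measure \(\lambda|_{\phi(U)}\) under the homeomorphism \(\phi|_U\inverse\colon\phi(U)\to U\); concretely, \(\int_U f\,\dd\mu_U=\int_{\phi(U)}f\circ\phi|_U\inverse\,\dd\lambda\) for \(f\in\Contc(U)\). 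Because \(\phi|_U\) is a homeomorphism, \(\mu_U\) is a well-defined positive Radon measure on \(U\).

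The second step is to verify the compatibility hypothesis of Proposition~\ref{prop:bourbaki-pasting-measures}: for \(U,V\in\phi^{\Ho}\) the restrictions of \(\mu_U\) and \(\mu_V\) to \(U\cap V\) coincide. This is where the argument really sits, though it is short. Since \(\phi^{\Ho}\) is closed under intersections, \(U\cap V\in\phi^{\Ho}\), and on \(U\cap V\) both \(\phi|_U\) and \(\phi|_V\) are nothing but \(\phi|_{U\cap V}\) (all three are restrictions of the single map \(\phi\)). As forming the image measure commutes with restriction to an open subset, \(\mu_U|_{U\cap V}\) and \(\mu_V|_{U\cap V}\) are both equal to \(\mu_{U\cap V}\), the image of \(\lambda|_{\phi(U\cap V)}\) under \(\phi|_{U\cap V}\inverse\). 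With compatibility in hand, Proposition~\ref{prop:bourbaki-pasting-measures} produces a unique measure \(\phi^*(\lambda)\) on \(X\) whose restriction to each \(U\in\phi^{\Ho}\) is \(\mu_U\).

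It remains to check the defining formula~\eqref{eq:pullback-measure-1} and to read off uniqueness. Given \(f\in\Contc(X,U)\) with \(U\in\phi^{\Ho}\), the support of \(f\) lies in \(U\), so \(\int_X f\,\dd\phi^*(\lambda)=\int_U f\,\dd\mu_U=\int_{\phi(U)}f\circ\phi|_U\inverse\,\dd\lambda\); and since \(0^Y(f\circ\phi|_U\inverse)\) vanishes off \(\phi(U)\) by construction, this last integral equals \(\int_Y 0^Y(f\circ\phi|_U\inverse)\,\dd\lambda\), which is exactly~\eqref{eq:pullback-measure-1}. Uniqueness then follows at once: any measure on \(X\) obeying~\eqref{eq:pullback-measure-1} must restrict to \(\mu_U\) on every \(U\in\phi^{\Ho}\) (test it against all \(f\in\Contc(X,U)\)), and a measure is determined by its restrictions to an open cover, again by Proposition~\ref{prop:bourbaki-pasting-measures}. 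I expect the only mildly delicate point to be the bookkeeping in the compatibility step, in particular confirming that the image measure commutes with restriction to the open set \(U\cap V\); the remainder is a direct application of the gluing proposition together with the definition of the image measure under a homeomorphism.
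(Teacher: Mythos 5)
Your proposal is correct and follows essentially the same route as the paper: define the local measures \(\mu_U(f)=\int_{\phi(U)}f\circ\phi|_U\inverse\,\dd\lambda\) on each \(U\in\phi^{\Ho}\), check compatibility on intersections using that \(\phi^{\Ho}\) is closed under intersections, and glue via Proposition~\ref{prop:bourbaki-pasting-measures}, finishing with the observation that the integral over \(\phi(U)\) equals the integral over \(Y\) of the extension by zero. Your write-up is slightly more explicit than the paper's on the compatibility step and on uniqueness, but the substance is identical.
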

\begin{proof}
We know that \(\phi^{\Ho}\) is an open cover of \(X\). For each \(U\in\phi^{\Ho}\) define a measure \(\phi^*(\lambda)_U\) on \(U\) by 
\[
\phi^*(\lambda)_U(f)=\int_{\phi(U)} f\circ \phi|_U\inverse \,\dd\lambda.
\]
 If \(U, V\in \phi^{\Ho}\), then \(U\cap V\in \phi^{\Ho}\). Furthermore, for \(f\in \Contc(U\cap V)\),
\[
\phi^*(\lambda)_{U}(f)=\phi^*(\lambda)_{U\cap V}(f)=\phi^*(\lambda)_{V}(f).
\]
Now Proposition~\ref{prop:bourbaki-pasting-measures} says that there is a unique measure on \(X\), which we denote by \(\phi^*(\lambda)\), which has the property that restriction of \(\phi^*(\lambda)\) to each \(U\) in \(\phi^{\Ho}\) equals \(\phi^*(\lambda)_U\). The result, now, follows from the observation that \(\lambda_{\phi(U)}(f\circ \phi|_U\inverse)=\lambda(0^Y(f\circ \phi|_{U}\inverse))\) for \(f\in\Contc(X,U)\) where \(U\in\phi^{\Ho}\).
\end{proof}
One may observe that Lemma~\ref{lem:pulling-and-pasting-measures-1} can be restated when the function \(f\) in Equation~\eqref{eq:pullback-measure-1} is an integrable function in \(\Bor(X,U)\) for \(U\in \phi^{\Ho}\). The proof is similar to the one above.

Let \(Y\) be a locally compact, Hausdorff space and \(\lambda\) a measure on it. Then~\cite{Bourbaki1966Topologoy-Part-1}*{Proposition 12, {\S}9.\,7, Chapter I} says that any locally compact Hausdorff subspace \(Z\) of \(Y\) is locally closed. Moreover,~\cite{Bourbaki1966Topologoy-Part-1}*{Proposition 5, {\S}3.\,3, Chapter I} implies that \(Z\) is an intersection of a closed and open subsets of \(Y\). Hence \(Z\) is a Borel subspace of \(Y\) and the restriction of \(\lambda\) to \(Z\) is defined.
\begin{corollary}
  \label{cor:pulling-and-pasting-measures-2}
Let \(X\) and \(Y\) be locally compact Hausdorff spaces, and \(\lambda\) a measure on \(Y\). If \(\phi\colon X\to Y\) is a local homeomorphism onto its image, then \(\lambda\) induces a unique measure \(\phi^*(\lambda)\) on \(X\) such that for every \(f\in \Contc(X,U)\) where \(U\in\phi^{\HoC}\) we have
\begin{equation}
  \label{eq:pullback-measure-2}
\int_X f\,\dd\phi^*(\lambda)=\int_{_{\phi(U)}} f'\,\dd\lambda=\int_{_{\phi(X)}} f'\,\dd\lambda
\end{equation}
for any extension \(f'\in\Ext_\phi(f)\).
 A similar statement holds for \(f\in \Bor(X,U)\) and \(f'\in\ExtB_\phi(f)\).
\end{corollary}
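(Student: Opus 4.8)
The plan is to reduce the non-surjective case to the surjective local homeomorphism already treated in Lemma~\ref{lem:pulling-and-pasting-measures-1}, by corestricting \(\phi\) onto its image. First I would set \(Z\defeq\phi(X)\) and check that \(Z\) is a locally compact Hausdorff subspace of \(Y\): it is Hausdorff as a subspace of \(Y\), and it is locally compact because \(\phi\) is a local homeomorphism onto its image, so every point of \(Z\) has an open neighbourhood \(\phi(U)\) (with \(U\in\phi^{\Ho}\)) homeomorphic to an open subset of the locally compact space \(X\). By the discussion immediately preceding the statement, such a \(Z\) is locally closed, hence Borel in \(Y\), so the restriction \(\lambda|_Z\) is a well-defined measure on \(Z\).

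Next I would pass to the corestriction \(\phi'\colon X\to Z\), which is now a surjective local homeomorphism. Since \(\phi|_U\) and \(\phi'|_U\) have the same image \(\phi(U)\) for every open \(U\subseteq X\), the collections \(\phi^{\Ho}\) and \(\phi^{\HoC}\) coincide with \((\phi')^{\Ho}\) and \((\phi')^{\HoC}\), and \(\phi|_U\inverse=\phi'|_U\inverse\) as maps \(\phi(U)\to U\). Applying Lemma~\ref{lem:pulling-and-pasting-measures-1} to \(\phi'\) and the measure \(\lambda|_Z\) on \(Z\) then yields a unique measure on \(X\), which I define to be \(\phi^*(\lambda)\defeq(\phi')^*(\lambda|_Z)\), satisfying
\[
\int_X f\,\dd\phi^*(\lambda)=\int_Z 0^Z(f\circ\phi'|_U\inverse)\,\dd\lambda|_Z
\]
for every \(f\in\Contc(X,U)\) with \(U\in\phi^{\Ho}\). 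Uniqueness is inherited from Lemma~\ref{lem:pulling-and-pasting-measures-1}; and because \(\phi^{\HoC}\) is itself an open cover of \(X\) (Notation~\ref{not:1}(3)), the restricted identity over \(U\in\phi^{\HoC}\) already determines \(\phi^*(\lambda)\) on each such \(U\), hence on all of \(X\).

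It remains to rewrite the right-hand side in terms of an extension. Taking \(U\in\phi^{\HoC}\), Lemma~\ref{lem:loc-homeo-fn-ext-restr} guarantees \(\Ext_\phi(f)\neq\emptyset\), and for any \(f'\in\Ext_\phi(f)\) the discussion in Notation~\ref{not:1}(4) gives \(f'|_Z=0^Z(f\circ\phi|_U\inverse)=0^Z(f\circ\phi'|_U\inverse)\), so the value is independent of the chosen extension. Combining this with the elementary restriction identity \(\int_Z g\,\dd\lambda=\int_Z g\,\dd\lambda|_Z\) gives \(\int_X f\,\dd\phi^*(\lambda)=\int_{\phi(X)} f'\,\dd\lambda\); and the inclusion \(\supp(f')\cap\phi(X)\subseteq\phi(U)\), valid for every extension by the definition of \(\ExtS_\phi(U)\), lets me shrink the domain of integration from \(\phi(X)\) to \(\phi(U)\). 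This produces the full chain~\eqref{eq:pullback-measure-2}. The Borel statement follows verbatim, replacing \(\Contc\), \(\Ext_\phi\) by \(\Bor\), \(\ExtB_\phi\) and invoking the Borel versions of Lemmas~\ref{lem:pulling-and-pasting-measures-1} and~\ref{lem:loc-homeo-fn-ext-restr}.

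The only genuine obstacle is bookkeeping: keeping the three ambient spaces \(X\), \(Z=\phi(X)\), \(Y\) and their three measures straight, and in particular verifying that the function \(0^Z(f\circ\phi'|_U\inverse)\) on \(Z\) produced by Lemma~\ref{lem:pulling-and-pasting-measures-1} is precisely the restriction to \(Z\) of an arbitrary extension \(f'\) living on \(Y\). This identification is exactly what the ``any two extensions agree on \(\phi(X)\)'' remark in Notation~\ref{not:1}(4) was arranged to supply, so once it is invoked the argument is routine.
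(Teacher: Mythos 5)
Your proposal is correct and follows essentially the same route as the paper's own proof: corestrict \(\phi\) to \(Z=\phi(X)\), verify \(Z\) is a locally compact (hence Borel) subspace of \(Y\) so that \(\lambda|_Z\) makes sense, apply Lemma~\ref{lem:pulling-and-pasting-measures-1} to the surjective local homeomorphism \(\phi'\colon X\to Z\), and then use the ``extensions agree on \(\phi(X)\)'' observation from Notation~\ref{not:1}(4) to pass to Equation~\eqref{eq:pullback-measure-2}. The only cosmetic differences are your slightly more explicit treatment of uniqueness via the cover \(\phi^{\HoC}\) and your direct argument for local compactness of \(Z\), both of which the paper leaves implicit or phrases differently.
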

\begin{proof}
Let \(\phi'\colon X\to \phi(X)\) be the map obtained from \(\phi\) by restricting the codomain; here \(\phi(X)\subseteq Y\) is equipped with subspace topology. Then \(\phi'\) is a local homeomorphism and hence open. Since the image of a locally compact space under an open map is locally compact, \(\phi(X)\) is locally compact when equipped with the subspace topology from \(Y\). Thus \(\phi(X)\) is a locally compact subspace of \(Y\). Now we bestow \(\phi(X)\) with the restricted measure \(\lambda|_{\phi(X)}\) and apply Lemma~\ref{lem:pulling-and-pasting-measures-1} to \(\phi':X\to \phi(X)\). This gives us the measure \({\phi'}^*(\lambda|_{\phi(X)})\) which we denote by \(\phi^*(\lambda)\). Equation~\eqref{eq:pullback-measure-1} tells us that for \(f\in \Contc(X;U)\), where \(U\in\phi^{\Ho}\),
\begin{equation}
\label{eq:ext-1}
\int_X f\,\dd\phi^*(\lambda)=\int_{\phi(X)}0^{\phi(X)}(f\circ \phi|_{\phi(U)}\inverse)\;\,\dd\lambda.
\end{equation}
Since \(0^{\phi(X)}(f\circ \phi|_{\phi(U)}\inverse)\in \Contc(\phi(X),\phi(U))\), we can write
\[
\int_X f\,\dd\phi^*(\lambda)=\int_{\phi(X)}0^{\phi(X)}(f\circ \phi|_{\phi(U)}\inverse)\;\dd\lambda=\int_{\phi(U)}0^{\phi(X)}(f\circ \phi|_{\phi(U)}\inverse)\;\dd\lambda.
\]
Let \(f'\in\Ext_\phi(f)\). Then Notation~\ref{not:1}(4) tells us that \(f'|_{\phi(X)}=0^{\phi(X)}(f\circ\phi|_U\inverse)\) and \(f'|_{\phi(X)}=(f'|_{\phi(U)})_{\phi(X)}^0\). From this discussion and Equation~\eqref{eq:ext-1} we may write that
\begin{equation*}
 \int_X f\,\dd\phi^*(\lambda)
=\int_{\phi(U)}(f\circ\phi|_U\inverse)^0_{\phi(X)}\,\dd\lambda
=\int_Y f'|_{\phi(X)}\,\dd\lambda
=\int_Y f'|_{\phi(U)}\,\dd\lambda.
\end{equation*}

The claim of the lemma follows by observing that the last two terms in the above equation are \(\int_{\phi(X)}
  f'\,\dd\lambda\) and \(\int_{\phi(U)} f'\,\dd\lambda\) , respectively.

The proof for the measurable case can be written along the same lines as above.
\end{proof}

\begin{lemma}
  \label{lem:conc-measu-equi}
 Let \(Y\) be a locally compact, Hausdorff space, \(\lambda\) a measure on it and \(Z\) a locally compact subspace of \(Y\). Then the following are equivalent:
 \begin{enumerate}[i)]
 \item For every nonnegative \(f\in \Contc(Y)\), \(\int_Y f\,\dd\lambda = \int_Z f\,\dd\lambda\).
 \item For every \(f\in \Contc(Y)\), \(\int_Y f\,\dd\lambda = \int_Z f\,\dd\lambda\).
 \item For every \(f\in \Lone(Y)\), \(\int_Y f\,\dd\lambda = \int_Z f\,\dd\lambda\).
 \item For every nonnegative \(f\in \Lone(Y)\), \(\int_Y f\,\dd\lambda = \int_Z f\,\dd\lambda\).
 \item Every \(y\in Y\) has a neighbourhood \(V\subseteq Y\) such that \(\lambda(V\cap (Y-Z))=0\).
 \end{enumerate}
\end{lemma}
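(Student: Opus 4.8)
The plan is to prove the cyclic chain (i)$\Rightarrow$(ii)$\Rightarrow$(iii)$\Rightarrow$(iv)$\Rightarrow$(i), which settles the equivalence of the four integral conditions, and then to close the loop with the separate equivalence (i)$\Leftrightarrow$(v). Throughout I would use that, since $Z$ is a locally compact Hausdorff subspace of the Hausdorff space $Y$, it is Borel in $Y$ (as recalled just before the statement), so that $\int_Z f\,\dd\lambda=\int_Y f\chi_Z\,\dd\lambda$ and hence
\[
\int_Y f\,\dd\lambda-\int_Z f\,\dd\lambda=\int_{Y-Z} f\,\dd\lambda .
\]
Each of the conditions (i)--(iv) is thus the assertion that this last integral vanishes on the relevant class of functions, and condition (v) is a statement of local nullity of the complement $Y-Z$.

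Among the four integral conditions, three of the implications in the chain are essentially formal. The implication (i)$\Rightarrow$(ii) follows by writing an arbitrary $f\in\Contc(Y)$ as a complex-linear combination of four nonnegative functions in $\Contc(Y)$ (the positive and negative parts of its real and imaginary parts) and invoking the linearity of both $\int_Y$ and $\int_Z$. The implication (iii)$\Rightarrow$(iv) is a mere restriction of the quantifier, and so is (iv)$\Rightarrow$(i) once one notes that every nonnegative $f\in\Contc(Y)$ lies in $\Lone(Y)$ because $\lambda$ is Radon. The only nonformal step of the chain is (ii)$\Rightarrow$(iii): here I would observe that $f\mapsto\int_{Y-Z}f\,\dd\lambda$ is a linear functional on $\Lone(Y)$ of norm at most one, since $\bigl\lvert\int_{Y-Z}f\,\dd\lambda\bigr\rvert\le\int_Y\lvert f\rvert\,\dd\lambda=\norm{f}_1$. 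By (ii) it vanishes on $\Contc(Y)$, which is dense in $\Lone(Y)$ for the Radon measure $\lambda$; being continuous it then vanishes identically, which is exactly (iii).

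It remains to prove (i)$\Leftrightarrow$(v), which is the measure-theoretic heart of the lemma. For (v)$\Rightarrow$(i), let $f\in\Contc(Y)$ be nonnegative and set $K\defeq\supp(f)$. Covering the compact set $K$ by finitely many of the neighbourhoods supplied by (v), I would conclude that $K\cap(Y-Z)$ is a finite union of $\lambda$\nb-null sets, hence $\lambda$\nb-null; since $f$ is supported in $K$, this yields $\int_{Y-Z}f\,\dd\lambda=0$, which is (i). For the converse (i)$\Rightarrow$(v), fix $y\in Y$ and a relatively compact open neighbourhood $V$ of $y$, so that $W\defeq V\cap(Y-Z)$ is Borel of finite measure. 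If $\lambda(W)>0$, then by inner regularity of the Radon measure $\lambda$ on sets of finite measure there is a compact $K\subseteq W$ with $\lambda(K)>0$, and Urysohn's lemma furnishes a nonnegative $f\in\Contc(Y)$ with $f=1$ on $K$ and $\supp(f)\subseteq V$. Since $f$ vanishes off $V$ and $f\ge 0$, we get $\int_{Y-Z}f\,\dd\lambda\ge\int_K f\,\dd\lambda=\lambda(K)>0$, contradicting (i); hence $\lambda(W)=0$, which is (v).

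The main obstacle is exactly this equivalence with (v). Passing from the integral identity to the local nullity statement requires combining inner regularity with an Urysohn bump function, while the reverse passage must be routed through functions of \emph{compact support}, because an uncountable union of the null sets in (v) need not be null. This is why I prove (v)$\Rightarrow$(i) rather than attempting (v)$\Rightarrow$(iv) directly, and then recover the $\Lone$ statements only afterwards, through the density argument in (ii)$\Rightarrow$(iii).
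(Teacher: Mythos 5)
Your proof is correct, but it is organised differently from the paper's, and one step uses genuinely heavier machinery. The paper proves the single cycle (i)\(\implies\)(ii)\(\implies\)(iii)\(\implies\)(iv)\(\implies\)(v)\(\implies\)(i), whereas you close a four\nb-term cycle (i)\(\implies\)(ii)\(\implies\)(iii)\(\implies\)(iv)\(\implies\)(i) and then prove (i)\(\iff\)(v) separately. The most substantive difference is how (v) is reached: the paper deduces it from (iv) almost for free, by plugging the indicator \(\chi_V\) of a relatively compact neighbourhood into the \(\Lone\)\nb-identity and using \(\chi_Z\cdot\chi_{Y-Z}=0\); since you start instead from (i), where only continuous compactly supported test functions are available, you are forced to invoke inner regularity of the Radon measure together with an Urysohn bump function and argue by contradiction --- correct, but more machinery than the paper needs. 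In the reverse direction (v)\(\implies\)(i) your argument is actually \emph{more} elementary than the paper's: you observe that \(\supp(f)\cap(Y-Z)\) is a finite union of null sets and conclude directly, where the paper routes the same finite subcover through a partition of unity. Your (ii)\(\implies\)(iii) (a bounded functional on \(\Lone(Y)\) vanishing on the dense subspace \(\Contc(Y)\)) is a cleaner packaging of the paper's explicit \(\epsilon\)\nb-approximation of \(f-f\chi_Z\), resting on the same density fact. Net effect: the paper's route is the more self-contained one (indicator functions and a partition of unity suffice, with no appeal to inner regularity), while yours isolates the equivalence of the four integral conditions from the local statement (v) and keeps (i)\(\iff\)(v) as a standalone, reusable argument.
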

\begin{proof}
\noindent (i)\(\implies\)(ii): Let \(f\in \Contc(Y)\). Put \(f
^+ = \max\{f,0\}\) and \(f^-=-\min\{f,0\}\) where \(0\) is the constant function \(0\) on \(Y\). Then \(f^+\) and \(f^-\) are continuous, and they are compactly supported since \(\supp(f^+)\) and \(\supp(f^-)\) are contained in  \(\supp(f)\). Furthermore, \(f= f^+ - f^-\) . Hence
\[
\int_Y f\,\dd\lambda = \int_Y f^+\,\dd\lambda - \int_Y f^-\,\dd\lambda = \int_Z f^+\,\dd\lambda - \int_Z f^-\,\dd\lambda =\int_Z f\,\dd\lambda.
\]
\smallskip

\noindent (ii)\(\implies\)(iii): Let \(f\in \Lone(Y)\). Then \(f-f|_Z = f-f\chi_Z\) is also in \(\Lone(Y)\) and we may choose \(g\in \Contc(Y)\) such that \(\int_Y \abs{(f-f|_Z)-g}\,\dd\lambda < \epsilon/2\) for given \(\epsilon >0\). Now
\begin{multline}
 \label{eq:meas-conc-qui-1}
  \int_Y \abs{g}\,\dd\lambda=\int_Z \abs{g}\,\dd\lambda\leq \int_Z \abs{g-(f-f|_Z)}\,\dd\lambda + \int_Z (f-f|_Z)\,\dd\lambda\\
=\int_Z \abs{g-(f-f|_Z)}\,\dd\lambda\leq \int_Y \abs{g-(f-f|_Z)}\,\dd\lambda< \epsilon/2.
\end{multline}
In above computation, the first equality is due to the hypothesis, the second equality is due to the fact that \(\int_Z (f-f|_Z)\,\dd\lambda=\int_Y \chi_Z (f-f \chi_Z)\,\dd\lambda=0\) and the second last inequality is because of the fact that \(\abs{h\chi_Z}\leq \abs{h}\) for any integrable function \(h\in \Lone(Y)\). 
Now
\[
  0\leq \abs{\int_Y f\,\dd\lambda-\int_Y f|_Z\,\dd\lambda}\leq \int_Y(f-f|_Z)\,\dd\lambda\leq \int_Y\abs{(f-f|_Z)-g}\,\dd\lambda+ \int_Y \abs{g}\,\dd\lambda< \epsilon.
\]
The last inequality above is due to the choice of \(g\) and Equation~\eqref{eq:meas-conc-qui-1}. Since above equation holds for any \(\epsilon>0\), we have \(\int_Y f\,\dd\lambda=\int_Y f|_Z\,\dd\lambda\). 

\noindent (iii)\(\implies\)(iv): Clear.

\noindent (iv)\(\implies\)(v): For \(y\in Y\), let \(V\subseteq Y\) be a relatively compact neighbourhood. Then \(\lambda(\chi_V)=\lambda(V)\leq \lambda(\overline{V})< \infty\); thus \(\chi_V\in \Lone(Y)\) is a nonnegative function. Using the hypothesis we compute
\[
\lambda(V\cap (Y-Z))= \int_Y \chi_V\cdot\; \chi_{(Y-Z)}\;\dd\lambda=\int_Z \chi_V\cdot\; \chi_{(Y-Z)}\;\dd\lambda=\int_Y \chi_V\cdot\; \chi_Z\cdot\;\chi_{(Y-Z)}\;\dd\lambda=0, \]
as \(\chi_Z\cdot\;\chi_{(Y-Z)}=0\).

\noindent (v)\(\implies\)(i): Let \(y\in Y\) and \(V\)  a neighbourhood of \(y\) in \(Y\) such that \(\lambda(V\cap (Y-Z))=0\). Let \(g\in \Contc(Y,V)\) be a nonnegative function then
 \[\int_{Y-Z} g\,\dd\lambda=\int_Y g\cdot\, \chi_V\cdot\;\chi_{Y-Z}\;\dd\lambda\leq \norm{g}_\infty \int_Y \chi_V\cdot\;\chi_{Y-Z}\;\dd\lambda =\norm{g}_\infty\lambda(V\cap (Y-Z))=0.
\]
Now the equality \(\int_Y g\,\dd\lambda = \int_Z g\,\dd\lambda + \int_{Y-Z} g\,\dd\lambda\) gives us that \(\int_Y g\,\dd\lambda = \int_Z g\,\dd\lambda\).

Let \(f\) be a nonnegative function in \(\Contc(Y)\). For every \(y\in \supp(f)\), let \(V_y\) be a neighbourhood of \(y\) such that \(\lambda(V_y\cap (Y-Z))=0\). Because the support of \(f\) is compact and \(\{V_y\}_{y\in \supp(f)}\) cover \(\supp(f)\), we may choose finitely many neighbourhoods \(V_{y_1},\dots,V_{y_n}\) which cover \(\supp(f)\). Let \(u_1,\dots,u_n\) be a partition of unity on \(\supp(f)\) subordinated to this finite cover. Then for every \(i\in \{1,\dots,n\}\), \(f u_i\) is a nonnegative function in \(\Contc(Y,V_{x_i})\). Since \(f=\sum_{i=1}^n fu_i\) using the result proved in the previous paragraph we see that
\[
\int_Y f\,\dd\lambda= \int_Y\sum_{i=1}^n fu_i\,\dd\lambda= \sum_{i=1}^n \int_Y fu_i\,\dd\lambda =\sum_{i=1}^n \int_Z fu_i\,\dd\lambda=\int_Z f\,\dd\lambda. 
\]
\end{proof}
Let \(Y\) be a locally compact, Hausdorff space and \(\lambda\) a measure on it. Let \(Z\) be locally compact subspace of \(Y\). We say \(\lambda\) is \emph{concentrated} on \(Y\), if any one of the equivalent conditions in Lemma~\ref{lem:conc-measu-equi} holds. Condtion~{(v)} in Lemma~\ref{lem:conc-measu-equi} is~\cite{Bourbaki2004Integration-I-EN}*{Definition 4, Nr.\,7, {\S}5, V.\,55}.
\begin{lemma}
  \label{lem:loc-concentrated-measure}
Let \(Y,Z\) and \(\lambda\) be as in Lemma~\ref{lem:conc-measu-equi} above. Let \(\{U_i\}\) be an open cover of \(Y\). Then, for each \(i\), \(\lambda\) is concentrated on \(Z\) if and only if the restricted measure \(\lambda|_{U_i}\) is concentrated on \(U_i\cap Z\) for each \(i\).
\end{lemma}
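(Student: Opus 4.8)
The plan is to work entirely with the local characterisation (v) of Lemma~\ref{lem:conc-measu-equi}, since there "concentrated on \(Z\)" is expressed as the condition that every point has a neighbourhood on which \(\lambda\) assigns the complement of \(Z\) zero mass. This is a manifestly local statement, precisely the sort of thing that should pass freely between \(Y\) and the members of an open cover, whereas the integral conditions (i)--(iv) would force me to deal with partitions of unity and global integrals. Before starting I would record for well-definedness that each \(U_i\) is locally compact Hausdorff as an open subspace of \(Y\), and that \(U_i\cap Z\) is a locally compact subspace of \(U_i\) (it is open in \(Z\), and an open subspace of a locally compact space is locally compact), so that the phrase ``\(\lambda|_{U_i}\) is concentrated on \(U_i\cap Z\)'' is meaningful.

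The one bookkeeping fact I would isolate first is that for any Borel set \(B\subseteq U_i\) one has \(\lambda|_{U_i}(B)=\lambda(B)\). This is immediate from the definition of the restriction to a Borel (here open) subset given before Proposition~\ref{prop:bourbaki-pasting-measures}: taking \(B'=B\) in \(B=B'\cap U_i\) gives \(\lambda|_{U_i}(B)=\lambda(B'\cap U_i)=\lambda(B)\). I would also note the elementary set identity \(U_i-(U_i\cap Z)=U_i\cap(Y-Z)\), which is what lets me compare the two ``complement of \(Z\)'' sets across the two pictures.

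For the forward direction I would fix \(i\) and \(y\in U_i\), invoke (v) for \(\lambda\) to obtain a neighbourhood \(V\) of \(y\) in \(Y\) with \(\lambda(V\cap(Y-Z))=0\), and set \(W=V\cap U_i\), a neighbourhood of \(y\) in \(U_i\). Since \(W\cap(U_i\cap(Y-Z))\subseteq V\cap(Y-Z)\), monotonicity together with the restriction identity gives \(\lambda|_{U_i}(W\cap(U_i\cap(Y-Z)))=0\), which is exactly (v) for \(\lambda|_{U_i}\) relative to \(U_i\cap Z\). For the converse I would fix \(y\in Y\), choose \(i\) with \(y\in U_i\) (possible since \(\{U_i\}\) covers \(Y\)), and apply (v) for \(\lambda|_{U_i}\) to get a neighbourhood \(W\) of \(y\) in \(U_i\), which I may take open, with \(\lambda|_{U_i}(W\cap U_i\cap(Y-Z))=0\). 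Because \(U_i\) is open in \(Y\), \(W\) is also an open neighbourhood of \(y\) in \(Y\); and as \(W\subseteq U_i\), the restriction identity yields \(\lambda(W\cap(Y-Z))=\lambda|_{U_i}(W\cap(Y-Z))=0\), which is (v) for \(\lambda\) relative to \(Z\).

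I expect no serious obstacle: the substantive decision is to use the local characterisation (v) rather than one of the integral conditions, after which both implications reduce to set-theoretic bookkeeping. The only points requiring a moment's care are the identity \(U_i-(U_i\cap Z)=U_i\cap(Y-Z)\) and the fact that restriction to the open set \(U_i\) does not change the measure of subsets of \(U_i\); the openness of the cover is used exactly once, in the converse, to promote a neighbourhood in \(U_i\) to a neighbourhood in \(Y\).
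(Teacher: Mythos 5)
Your proof is correct, but it runs along a genuinely different track from the paper's. You work exclusively with the local characterisation (v) of Lemma~\ref{lem:conc-measu-equi}: in the forward direction you intersect a null neighbourhood \(V\) with \(U_i\), and in the converse you promote an open neighbourhood \(W\subseteq U_i\) to an open neighbourhood in \(Y\) (using openness of \(U_i\)), with the identity \(\lambda|_{U_i}(B)=\lambda(B)\) for Borel \(B\subseteq U_i\) and the set identity \(U_i-(U_i\cap Z)=U_i\cap(Y-Z)\) doing all the remaining work. The paper instead works with the integral characterisations: the forward direction is a chain of equalities obtained by multiplying \(f\in\Contc(Y,U_i)\) by characteristic functions \(\chi_{U_i}\) and \(\chi_{Z\cap U_i}\) and invoking condition (ii), while the converse covers \(\supp(f)\) by finitely many \(U_{i_j}\), takes a partition of unity \(u_1,\dots,u_n\) subordinate to that cover, and sums the local integral equalities, mirroring the (v)\(\implies\)(i) argument of Lemma~\ref{lem:conc-measu-equi}. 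Your route buys brevity and avoids both partitions of unity and integral estimates --- concentration via (v) is manifestly a local property, so localising it to an open cover is pure bookkeeping; the cost is that you lean entirely on the already-proved equivalence (i)--(v), whereas the paper's integral computation stays in the form in which ``concentrated'' is actually consumed later in the article (equality of integrals), making the forward direction directly quotable. Both proofs use the openness of the cover in exactly one place (you, to promote \(W\) to a neighbourhood in \(Y\); the paper, to form the subordinate partition of unity), and both are complete.
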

\begin{proof}
  If \(\lambda\) is concentrated on \(Z\), then for every \(U_i\) and every \(f\in \Contc(Y,U_i)\iso \Contc(U_i)\) we have
  \begin{multline*}
\int_{U_i} f\,\dd\lambda= \int_Y f\,\chi_{U_i}\;\dd\lambda= \int_Y f\,\dd\lambda\\
=\int_Z f\,\dd\lambda= \int_Z f\,\chi_{U_i}\,\dd\lambda=\int_Y f\,\chi_{Z\cap U_i}\;\dd\lambda=\int_{Z\cap U_i} f\,\;\dd\lambda,
  \end{multline*}
which proves that \(\lambda|_{U_i}\) is concentrated on \(Z\cap U_i\).

Conversely, for given \(f\in \Contc(Y)\), cover its support by finitely many elements in \(\{U_i\}\), say \(U_{i_1},\dots,U_{i_n}\) where \(n\in \N\). Let \(u_1,\dots,u_n\) be the partition of unity on \(\supp(g)\) subordinated to the open cover \(\{U_{i_1},\dots,U_{i_n}\}\). Then an argument as in the proof of the part (v)\(\implies\)(i) in Lemma~\ref{lem:conc-measu-equi} shows that \(\int_Y f\,\dd\lambda=\int_Z f\,\dd\lambda\).
\end{proof}

\begin{notation}
  \label{not:cov-ext}
  Let \(X\), \(Y\) be spaces and \(\phi\colon X\to Y\) a continuous map which is also a local homeomorphism onto its image. Let \(\mathcal{A}\) be an open cover of \(X\). An extension of \(\mathcal{A}\) via \(\phi\) is a collection of open sets \(\mathcal{A}'\) of \(Y\) with the following properties
  \begin{enumerate}[(i)]
  \item if \(U'\in\Cov'\), then either there is
    \(U\in\mathcal{A}\) such that \(\phi(U)=\phi(X)\cap U'\), or
    \(\phi(X)\cap U'=\emptyset\);
  \item for each \(U\in\Cov\), \(\ExtS_\phi(U)\cap\Cov'\neq \emptyset\), that is, there is at least one set \(U'\in\Cov'\) such that \(U'\cap\phi(X)=\phi(U)\).
  \end{enumerate}
  That is, each set in \(\mathcal{A}'\) which intersects \(\Img(\phi)\) is an extension of a set in \(\mathcal{A}\) via \(\phi\), and for given \(U\in\Cov'\), \(\Cov'\) contains at least one extension of \(U\). Note that an extension via \(\phi\) always exits for any cover of \(X\).
The collection \(\Cov'\) is called \emph{exhaustive} if it covers \(Y\). If \(\phi\) is an inclusion map of a subspace, we shall simply say an  ``extension'' or ``exhaustive extension''.
\end{notation}

\begin{example}
\label{exa:ext-of-open-cov-closed-sb}
Let \(\phi\colon X\to Y\) be a local homeomorphism onto its image where \(X\) and \(Y\) are spaces. Assume that \(\Img(\phi)\subseteq Z\) is closed, then every open cover \(\{U_i\}_{i\in I}\) of \(X\) has an exhaustive extension in \(Y\) which is 
\[
\{U_i'\subseteq Y: i\in I, U_i' \text{ is open in Y, and } \phi(U_i)= U_i'\cap Z\}\cup \{Y-\Img(X)\}.
\]
\end{example}

As the next example shows, it is not true that any open cover of a subspace admits an exhaustive extension.
\begin{example}
  \label{exa:refined-ext-counterex}
We show that there is a cover of the open subspace \(\R-\{0\}\) of \(\R\) which does not admit an exhaustive extension to \(\R\). Let \(\{\R^-,\R^+\}\) be a cover of \(\R^*\). Then this cover does not have an exhaustive extension to \(\R\). Given any open cover \(\{U_i\}_{i\in I}\) of \(\R\), choose an index \(j\in I\) such that \(0\in U_j\). Then  \(U_j\cap \R^-\neq\emptyset\) and \(U_j\cap \R^+\neq\emptyset\). Hence \(U_j\cap \R^* \nsubseteq \R^-\) as well as \(U_j\cap \R^* \nsubseteq \R^+\).

In fact, one may show that no open cover of \(\R-\{0\}\) admits an exhaustive  extension to \(\R\); the proof uses an argument similar to the one above.
\end{example}

\begin{lemma}
  \label{lem:pulling-and-pasting-measures-3}
Let \(X\) and \(Y\) be locally compact Hausdorff spaces, and \(\lambda\) a measure on \(Y\). Let \(\phi\colon X\to Y\) be a local homeomorphism onto its image. Let \(\phi^*(\lambda)\) be as in Corollary~\ref{cor:pulling-and-pasting-measures-2}. Let \(\mathcal{A}\) be an open of \(X\) consisting of sets in \(\phi^{\HoC}\),  and let \(\mathcal{A}'\) be an extension of \(\mathcal{A}\) via \(\phi\). Assume that for every \(V\in\mathcal{A}\), \(f\in \Contc(X,V)\), every \(V'\in \mathcal{A}'\) which is an extension of \(V\) via \(\phi\), and any extension \(f'\in\Contc(Y,V')\) of \(f\) via \(\phi\), we have
\begin{equation}
  \label{eq:pullback-measure-3}
\phi^*(\lambda)(f)=\lambda(f').
\end{equation}
 Then the following statements are true:
\begin{enumerate}[i)]
\item For every \(V\in \mathcal{A}\) and its extension \(V'\in \mathcal{A}'\), the measure \(\lambda|_{V'}\) is concentrated on \(\phi(V)\).
\item If \(B\defeq \cup_{V'\in \mathcal{A}'}\; V'\), then \(B\subseteq Y\) is open and the measure \(\lambda|_{B}\) is concentrated on \(\phi(X)\).
\item If the open cover \(\mathcal{A}'\) is exhaustive, then \(\lambda\) is concentrated on \(\phi(X)\).
\end{enumerate}
\end{lemma}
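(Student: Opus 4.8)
The plan is to extract statement (i) directly from the hypothesis by testing $\lambda$ against functions supported in a single extension set, and then to assemble (ii) and (iii) from these local statements by means of Lemma~\ref{lem:loc-concentrated-measure}.

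I would begin with (i). Fix $V\in\mathcal{A}$ and an extension $V'\in\mathcal{A}'$ of $V$ via $\phi$, so that $V\in\phi^{\HoC}$, $V'$ is open, and $V'\cap\phi(X)=\phi(V)$; note that $\phi(V)$ is then a locally compact subspace of $V'$. Since $\lambda|_{V'}$ is carried by $V'$, to prove it is concentrated on $\phi(V)$ it suffices, by the equivalence of conditions~(i) and~(ii) in Lemma~\ref{lem:conc-measu-equi}, to show that $\int_Y g\,\dd\lambda=\int_{\phi(V)} g\,\dd\lambda$ for every $g\in\Contc(Y,V')$. The key move is to transport such a $g$ back to $X$: by Lemma~\ref{lem:restr-is-Cc} the function $f\defeq\Res^\phi(g)$ lies in $\Contc(X,V)$, and by Lemma~\ref{lem:loc-homeo-fn-ext-restr}(iii) the original $g$ is an extension of $f$ via $\phi$. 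Thus the standing assumption~\eqref{eq:pullback-measure-3} applies to the pair $(f,g)$ and gives $\phi^*(\lambda)(f)=\lambda(g)=\int_Y g\,\dd\lambda$, while the defining identity~\eqref{eq:pullback-measure-2} of $\phi^*(\lambda)$ from Corollary~\ref{cor:pulling-and-pasting-measures-2}, read with $f'=g$, evaluates the same quantity as $\int_{\phi(V)} g\,\dd\lambda$. Comparing the two evaluations of $\phi^*(\lambda)(f)$ yields the required equality.

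For (ii) and (iii) I would patch these local results together. The set $B$ is open, being a union of open sets, and $\mathcal{A}'$ is by construction an open cover of $B$. Applying Lemma~\ref{lem:loc-concentrated-measure} to the space $B$, the measure $\lambda|_B$, the subspace $\phi(X)\cap B$, and the cover $\mathcal{A}'$ reduces the concentration of $\lambda|_B$ on $\phi(X)$ to the concentration of each $(\lambda|_B)|_{V'}=\lambda|_{V'}$ on $V'\cap\phi(X)$. When $V'$ is an extension of some $V\in\mathcal{A}$ we have $V'\cap\phi(X)=\phi(V)$, and the needed local concentration is exactly (i). Statement (iii) is then the special case in which $\mathcal{A}'$ is exhaustive: there $B=Y$, so $\lambda|_B=\lambda$ and (ii) reads that $\lambda$ itself is concentrated on $\phi(X)$.

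The step I expect to be delicate is precisely this patching. Lemma~\ref{lem:loc-concentrated-measure} demands a local concentration statement on \emph{every} member of the cover $\mathcal{A}'$, whereas the hypothesis~\eqref{eq:pullback-measure-3} only controls those $V'$ that genuinely extend a set of $\mathcal{A}$; on a set $V'\in\mathcal{A}'$ with $V'\cap\phi(X)=\emptyset$ the hypothesis is silent, and concentration there would force $\lambda|_{V'}=0$. Making (ii) and (iii) go through therefore hinges on handling these sets that miss $\Img(\phi)$, by arguing that they are $\lambda$\nb-null or by restricting attention to extensions built so that such sets do not occur (compare Example~\ref{exa:ext-of-open-cov-closed-sb} and the discussion of exhaustive extensions). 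Everything else is the routine bookkeeping of the identification $\Contc(Y,V')\cong\Contc(V')$ from Notation~\ref{not:1} together with the elementary properties of restricted measures.
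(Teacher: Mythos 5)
Your proof of part (i) is correct and is essentially the paper's own argument: both take \(g\in\Contc(Y,V')\iso\Contc(V')\), use Lemma~\ref{lem:restr-is-Cc} and Lemma~\ref{lem:loc-homeo-fn-ext-restr}(iii) to view \(g\) as an element of \(\Ext_\phi(\Res^\phi(g))\) with \(\Res^\phi(g)\in\Contc(X,V)\), and then play the hypothesis~\eqref{eq:pullback-measure-3} against Equation~\eqref{eq:pullback-measure-2} to get \(\int_{V'}g\,\dd\lambda=\int_{\phi(V)}g\,\dd\lambda\), which is condition~(ii) of Lemma~\ref{lem:conc-measu-equi}. Your plan for (ii) and (iii) --- patch the local statements with Lemma~\ref{lem:loc-concentrated-measure} and read (iii) off as the case \(B=Y\) --- is also exactly the paper's.

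The delicate step you isolate is, however, a genuine gap, and you should know the paper does not close it: its proof of (ii) opens with the assertion that ``by definition'' \(B\) is covered by sets \(V'\in\mathcal{A}'\) which are extensions of sets \(V\in\mathcal{A}\), whereas Notation~\ref{not:cov-ext} explicitly permits members of \(\mathcal{A}'\) disjoint from \(\phi(X)\), and on those the hypothesis~\eqref{eq:pullback-measure-3} places no constraint on \(\lambda\). In fact, parts (ii) and (iii) are false as literally stated. Take \(X=(0,1)\), \(Y=(0,1)\cup(4,6)\), \(\phi\) the inclusion, \(\lambda\) equal to Lebesgue measure on \((0,1)\) plus the unit point mass at \(5\), \(\mathcal{A}=\{(a,b):0<a<b<1\}\subseteq\phi^{\HoC}\), and \(\mathcal{A}'=\mathcal{A}\cup\{(4,6)\}\). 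Then \(\mathcal{A}'\) is an exhaustive extension of \(\mathcal{A}\) via \(\phi\); the only member of \(\mathcal{A}'\) extending a given \(V=(a,b)\) is \(V\) itself, and the only admissible extension of \(f\in\Contc(X,V)\) supported there is \(0^Y(f)\), so~\eqref{eq:pullback-measure-3} holds; yet \(\lambda\) is not concentrated on \(\phi(X)\) because of the atom at \(5\). Hence no argument can complete (ii) and (iii) from the stated hypotheses: the statement must be repaired in exactly the way you anticipate, namely by requiring every member of \(\mathcal{A}'\) to meet \(\phi(X)\) (equivalently, to be an extension of a member of \(\mathcal{A}\)), or by defining \(B\) as the union of only such members. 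Under that reading your argument, like the paper's, is complete. Note also that the gap propagates into the paper's applications: the remark preceding Lemma~\ref{lem:closed-image-of-Mlp-measure-conc} and that lemma itself invoke part (iii) with the exhaustive extension of Example~\ref{exa:ext-of-open-cov-closed-sb}, which contains the set \(Y-\Img(\phi)\), so the same caveat is needed there.
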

\begin{proof}
  (i): Let \(g\in \Contc(V')\). Then Lemma~\ref{lem:loc-homeo-fn-ext-restr}(iii) says that \(g\) is in \(\Ext_\phi(\Res^\phi(g))\) and the hypothesis tells us that
\[
\int_X\Res^\phi(g)\, \dd\phi^*(\lambda)=\int_Y g \,\dd\lambda.
\]
Note that the last term equals \(\int_{V'} g\,\dd\lambda\) since \(g\in \Contc(Y,V')\). Using Equation~\eqref{eq:pullback-measure-2} we see that
\[
\int_X \Res^\phi(g)\;\dd\phi^*(\lambda)= \int_{\phi(V)} g\,\dd\lambda
\]

The above two equations imply that \(\int_{V'} g\,\dd\lambda = \int_{\phi(V)} g\,\dd\lambda\) for any \(g\in \Contc(Y,V')\iso \Contc(V')\). Hence using Lemma~\ref{lem:conc-measu-equi}(ii) we infer that \(\lambda|_{V'}\) is concentrated on \(\phi(V)\).
\smallskip

\noindent (ii): By definition, the space \(B\) is covered by \(V'\in \mathcal{A}'\) where \(V'\) is an extension of some \(V\in \phi^{\HoC}\). For every \(V'\) like this, \(V'\cap \phi(X)=\phi(V)\). Now (i) of this lemma and Lemma~\ref{lem:loc-concentrated-measure} together yield the desired result.
\smallskip

\noindent (iii): In this case, \(B=Y\) and the result follows from~{(ii)} above.
\end{proof}
\medskip

Let \(X\) be a space, and \(\lambda\) and \(\mu\) measures on \(X\). To say that \(\lambda\) is absolutely continuous with respect to \(\mu\), we write ``\(\lambda\ll\mu\)'' and to say that  \(\lambda\) and \(\mu\) are equivalent we write ``\(\lambda\sim\mu\)''.
\begin{lemma}
  \label{lem:RN-positive-makes-pullback-measure-equi}
Let \(X,Y,\phi\) and \(\lambda\) be as in Lemma~\ref{lem:pulling-and-pasting-measures-3}. Let \(\mu\) be another measure on \(Y\).
\begin{enumerate}[i)]
\item If \(\mu\ll\lambda\), then \(\phi^*(\mu)\ll\phi^*(\lambda)\). Moreover, \(\frac{\dd\phi^*(\mu)}{\dd\phi^*(\lambda)}=\frac{\dd\mu}{\dd\lambda}\circ \phi\).
\item If \(\mu\sim\lambda\), then \(\phi^*(\mu)\sim\phi^*(\lambda)\).
\end{enumerate}
\end{lemma}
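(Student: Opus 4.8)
The plan for part (i) is to show directly that $\phi^*(\mu)$ equals the measure $(h\circ\phi)\cdot\phi^*(\lambda)$, where $h\defeq\frac{\dd\mu}{\dd\lambda}$ is the Radon--Nikodym derivative on $Y$; this simultaneously yields $\phi^*(\mu)\ll\phi^*(\lambda)$ and the asserted formula for the derivative. Since $\mu=h\cdot\lambda$ is Radon, $h$ is locally $\lambda$-integrable, and because each $\phi(U)$ for $U\in\phi^{\HoC}$ has relatively compact image it follows that $h\circ\phi$ is locally $\phi^*(\lambda)$-integrable; hence $(h\circ\phi)\cdot\phi^*(\lambda)$ is a well-defined Radon measure on $X$. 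It therefore suffices to verify
\[
\int_X f\,\dd\phi^*(\mu)=\int_X f\cdot(h\circ\phi)\,\dd\phi^*(\lambda)
\]
for every $f\in\Contc(X)$, since two Radon measures agreeing on $\Contc(X)$ coincide by the Riesz representation theorem.

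First I would reduce to $f\in\Contc(X,U)$ with $U\in\phi^{\HoC}$. For arbitrary $f\in\Contc(X)$, the compact set $\supp(f)$ is covered by finitely many members of the cover $\phi^{\HoC}$ (which is closed under intersection), and a subordinate partition of unity writes $f$ as a finite sum of functions each lying in some $\Contc(X,U)$, $U\in\phi^{\HoC}$; both sides of the displayed identity are additive in $f$. So fix $U\in\phi^{\HoC}$, $f\in\Contc(X,U)$, and choose an extension $f'\in\Ext_\phi(f)\subseteq\Contc(Y,V')$ with $V'\in\ExtS_\phi(U)$. Applying Corollary~\ref{cor:pulling-and-pasting-measures-2} to $\mu$ and then using $\dd\mu=h\,\dd\lambda$ gives
\[
\int_X f\,\dd\phi^*(\mu)=\int_{\phi(U)} f'\,\dd\mu=\int_{\phi(U)} f'\,h\,\dd\lambda.
\]

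The key step is to recognise $f'h$ as a Borel extension of $f\cdot(h\circ\phi)$ via $\phi$. The function $f\cdot(h\circ\phi)$ lies in $\Bor(X,U)$, and on $\phi(U)$ one computes, using $\phi\circ\phi|_U\inverse=\Id$ and $f'=f\circ\phi|_U\inverse$,
\[
\bigl(f\cdot(h\circ\phi)\bigr)\circ\phi|_U\inverse=(f\circ\phi|_U\inverse)\cdot h=f'\cdot h\quad\text{on }\phi(U),
\]
while $f'h$ is a Borel function supported in $V'$. Hence $f'h\in\ExtB_\phi\bigl(f\cdot(h\circ\phi)\bigr)$, and the Borel form of Corollary~\ref{cor:pulling-and-pasting-measures-2} yields
\[
\int_X f\cdot(h\circ\phi)\,\dd\phi^*(\lambda)=\int_{\phi(U)} f'\,h\,\dd\lambda.
\]
Comparing the two displays establishes the identity for $f\in\Contc(X,U)$, and the partition-of-unity reduction completes part (i). Part (ii) is then immediate: $\mu\sim\lambda$ means $\mu\ll\lambda$ and $\lambda\ll\mu$, so applying part (i) in both directions gives $\phi^*(\mu)\ll\phi^*(\lambda)$ and $\phi^*(\lambda)\ll\phi^*(\mu)$, i.e.\ $\phi^*(\mu)\sim\phi^*(\lambda)$.

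I expect the only genuine subtlety to be the bookkeeping in the key step: one must confirm that multiplying the continuous extension $f'$ by the merely Borel density $h$ again lands in $\ExtB_\phi$ of the correct function and that the Borel version of the pullback formula applies to it, all of whose integrals are finite thanks to the local $\lambda$-integrability of $h$ on the relatively compact sets $\phi(U)$. Everything else is a routine transport of measures through the local homeomorphisms $\phi|_U$.
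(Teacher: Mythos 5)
Your proposal is correct and takes essentially the same route as the paper: both proofs apply Corollary~\ref{cor:pulling-and-pasting-measures-2} in its continuous form to \(\mu\) and in its Borel form to \(\lambda\), with the same key observation that \(f'\cdot\frac{\dd\mu}{\dd\lambda}\) is a Borel extension (an element of \(\ExtB_\phi\)) of \(f\cdot\bigl(\frac{\dd\mu}{\dd\lambda}\circ\phi\bigr)\). The only cosmetic differences are your explicit partition-of-unity reduction from \(\Contc(X)\) to \(\Contc(X,U)\), \(U\in\phi^{\HoC}\) (left implicit in the paper, since \(\phi^*\) is determined by its restrictions to such sets), and your proof of (ii) by applying (i) in both directions, where the paper instead invokes positivity of the Radon--Nikodym derivative.
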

\begin{proof}
(i): Due to Corollary~\ref{cor:pulling-and-pasting-measures-2}, for \(U\in \phi^{\HoC}\) and \(f\in \Contc(X;U)\), we have that
\begin{align}
 \int_X f(x)\,\dd\phi^*(\mu)(x)\notag
&= \int_{\phi(X)} f'(y)\,\dd\mu(y)\notag\\
&= \int_{\phi(X)} f'(y)\,\frac{\dd\mu}{\dd\lambda}(y)\;\dd\lambda(y)\label{eq:pulback-qui-lemma-1}
\end{align}
for \(f'\in \Ext_\phi(f)\). Let
\[
F= f\cdot \frac{\dd\mu}{\dd\lambda}\circ\phi
\]
be a function on \(X\). Then \(F\in \Bor(X;U)\) is integrable. Now note that \(f'\,\frac{\dd\mu}{\dd\lambda} \in \ExtB_\phi\left(F\right)\) and compute
\begin{align*}
 \int_X \left(f\, \frac{\dd\mu}{\dd\lambda}\circ \phi\right)\dd\phi^*(\lambda)(x)
&=  \int_X F\,\dd\phi^*(\lambda)(x)\\
&=\int_{\phi(X)} f'(y)\frac{\dd\mu}{\dd\lambda}(y)\,\dd\lambda(y)\\
&=\int_Xf(x)\,\dd\phi^*(\mu)(x) && \text{(due to Equation~\eqref{eq:pulback-qui-lemma-1}).}
\end{align*}
The second equality in the above computation uses the measurable version of Equation~\eqref{eq:pullback-measure-2}). This shows that \(\phi^*(\mu)\ll\phi^*(\lambda)\) and \(\frac{\dd\phi^*(\mu)}{\dd\phi^*(\lambda)}=\frac{\dd\mu}{\dd\lambda}\circ \phi\).

\noindent (ii): Proof of (i) above shows that \(\frac{\dd\phi^*(\mu)}{\dd\phi^*(\lambda)}=\frac{\dd\mu}{\dd\lambda}\circ \phi\). If \(\mu\sim\lambda\), then \(\frac{\dd\mu}{\dd\lambda}>0\). Hence \(\frac{\dd\mu}{\dd\lambda}\circ \phi>0\) which is equivalent to saying that \(\phi^*(\lambda)\sim\phi^*(\mu)\).
\end{proof}

\subsection{Free, proper and transitive actions of groupoids}
\label{sec:act-gpd}

Let \(G\) be a topological groupoid and \(X\) a left \(G\)\nb-space. Let \(Y\) be a space. We call a map \(f\colon X\to Y\) \(G\)\nb-invariant if \(f(\gamma x)=f(x)\) for all \((\gamma,x)\in G\times_{\base}X\).

A map \(f\colon X\to Y\) of locally compact spaces is called proper if the inverse image of a (quasi)-compact set in \(Y\) under \(f\) is quasi-compact. A proper map has various characterisations, for example, see~\cite{Tu2004NonHausdorff-gpd-proper-actions-and-K}*{Proposition 1.6}.

\begin{definition}
  \label{def:mult-fn}
For a groupoid \(G\), a left \(G\)\nb-space \(X\), and a \(G\)\nb-invariant map \(f\) from \(X\) to a space \(Y\), define the function \( \Mlp_f\colon G\times_{\base}X\to X\times_{f,Y,f}X\) by
 \[
  \Mlp_f(\gamma,x)=(\gamma x,x).
 \]
\end{definition}
\noindent A similar definition can be written for a right \(G\)\nb-space. 

Let \(G\) be a groupoid acting on a space \(X\). Then the map \((\gamma, x)\mapsto (\gamma x,x)\) from \(G\times_{\base}X\to X\times X\) can be realised as the one in Definition~\ref{def:mult-fn} by taking the space \(Y\) to be the singleton \(\Pt\) and \(f\) the constant map; in this case, we denote the map by \(\Mlp_0\).
We call the map in Definition~\ref{def:mult-fn} ``the map \(\Mlp_f\)''.

 Let \(G\) be a topological groupoid, \(X\) a left \(G\)\nb-space and \(\Mlp_0\) as in the above paragraphs. We call the action of \(G\) on \(X\)
 \begin{enumerate*}[(i)]
 \item \emph{free} if the map \(\Mlp_0\) is one-to-one;
 \item \emph{proper} if \(\Mlp_0\) is proper;
 \item \emph{basic} if \(\Mlp_0\) is a homeomorphism onto its image (see~\cite{Meyer-Zhu2014Groupoids-in-categories-with-pretopology}).
 \end{enumerate*}
\noindent It can be checked that the action of \(G\) on \(X\) is free if and only if for any two composable pairs \((\gamma,x)\) and \((\gamma',x)\) in \(G\times_{\base}X\), \(\gamma x=\gamma' x\) implies \(\gamma=\gamma'\).

\begin{remark}
\label{rem:prop-act-im-closed}
  Talking about a proper action, a proper map is closed,
  see~\cite{Bourbaki1966Topologoy-Part-1}*{Proposition I, \S 10.1,
    Chapter I}. Thus for a groupoid \(G\) and a \(G\)\nb-space \(X\)
  \begin{enumerate*}[(i)]
  \item the image of \(G\times_{\base}X\)
    under \(\Mlp_0\)
    is a closed subset of \(X\times X\)
    if the action of \(G\) on \(X\) is proper;
  \item the action of \(G\)
    on \(X\)
    is free as well as proper if and only if the image of \(\Mlp_0\)
    is a closed subset of \(X\times X\),
    and \(\Mlp_0\) is a homeomorphisms onto its image; the action is basic.
  \end{enumerate*}
  The inclusion of the open interval \((0,1)\hookrightarrow [0,1]\)
  is a map which is homeomorphism onto its image but is not proper.
  in \((0,1)\) is not compact.
\end{remark}

Let \(G\) be a topological groupoid and \(X\) a left \(G\)\nb-space. Let \(Y\) be a space, and let \(f\colon X\to Y\) be a \(G\)\nb-invariant map. Let \([f]\colon G\7 X\to Y\) be the continuous map induced by \(f\); thus \([f]([x])=f(x)\) for all \(x\in X\). We always bestow \(G\7X\) with the quotient topology.

\begin{lemma}
  \label{lem:m-and-quotient-of-sx}
Let \(G\) be a topological groupoid and \(X\) a left \(G\)\nb-space. Let \(Y\) be a space and \(f\colon X\to Y\) a \(G\)\nb-invariant map. Then the following statements hold:
\begin{enumerate}[i)]
\item The action of \(G\) on \(X\) is free if and only if the map \(\Mlp_f\colon G\times_{\base}X\to X\times_{f,Y,f}X\) is one-to-one.
\item The map \([f]\colon G\7 X\to Y\) is surjective if and only if \(f\colon X\to Y\) is surjective. If \(f\) is open, then so is \([f]\).
\item \([f]\colon G\7 X\to Y\) is one-to-one if and only in the map \(\Mlp_f\) is surjective.
\end{enumerate}
\end{lemma}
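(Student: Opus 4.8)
The plan is to establish all three parts by unwinding the relevant definitions; the one structural observation that drives (i) and (iii) is that \(\Mlp_f\) is nothing but \(\Mlp_0\) with its codomain cut down to the fibre product. Indeed, since \(f\) is \(G\)\nb-invariant we have \(f(\gamma x)=f(x)\), so the point \((\gamma x,x)=\Mlp_0(\gamma,x)\) already lies in \(X\times_{f,Y,f}X\subseteq X\times X\); thus \(\Mlp_f\) and \(\Mlp_0\) have the same underlying rule and differ only in the declared codomain.

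For (i), I would note that restricting the codomain of a function alters neither its injectivity nor the values it assumes, so \(\Mlp_f\) is one-to-one if and only if \(\Mlp_0\) is. As freeness of the action is by definition the injectivity of \(\Mlp_0\), this settles (i) at once. For (iii) I would unwind both sides to the same orbit statement. Surjectivity of \(\Mlp_f\) means that for every \((a,b)\in X\times_{f,Y,f}X\), i.e.\ every pair with \(f(a)=f(b)\), there is a composable \((\gamma,x)\) with \(\gamma x=a\) and \(x=b\), hence a \(\gamma\) with \(\gamma b=a\); equivalently, \(f(a)=f(b)\) forces \(a\) and \(b\) into the same \(G\)\nb-orbit. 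On the other hand \([f]\) is one-to-one exactly when \([f]([a])=[f]([b])\), i.e.\ \(f(a)=f(b)\), forces \([a]=[b]\) in \(G\7X\), which is the very same orbit condition. Hence \([f]\) is one-to-one iff \(\Mlp_f\) is surjective.

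For (ii), the surjectivity equivalence is immediate, since \(\Img([f])=\{[f]([x]):x\in X\}=\{f(x):x\in X\}=\Img(f)\), so \([f]\) is onto iff \(f\) is onto. The one point that genuinely uses the topology is the openness claim, which I would handle through the quotient map \(q_X\colon X\to G\7X\) and the defining relation \([f]\circ q_X=f\). Given an open \(W\subseteq G\7X\), the set \(q_X\inverse(W)\) is open in \(X\) by the definition of the quotient topology, and since \(q_X\) is surjective we have \(q_X(q_X\inverse(W))=W\); therefore \([f](W)=[f](q_X(q_X\inverse(W)))=f(q_X\inverse(W))\), which is open in \(Y\) because \(f\) is open. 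This yields openness of \([f]\).

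None of the three parts presents a real obstacle: the lemma is essentially a bookkeeping exercise in the definitions of freeness, of the induced map \([f]\), and of the fibre product. The only place demanding the slightest care is keeping the composability and orbit conventions straight in (iii)—in particular, that the orbit relation is symmetric, so that ``\(\exists\,\gamma\colon \gamma b=a\)'' and ``\([a]=[b]\)'' really coincide—together with the routine quotient-topology verification of openness in (ii).
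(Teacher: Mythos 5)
Your proof is correct and follows essentially the same route as the paper: part (i) by identifying \(\Mlp_f\) with \(\Mlp_0\) up to restriction of codomain, part (iii) by translating both injectivity of \([f]\) and surjectivity of \(\Mlp_f\) into the same orbit condition, and the surjectivity claim in (ii) from the equality of images. Your explicit argument for openness of \([f]\) via \([f](W)=f(q_X\inverse(W))\) is in fact more detailed than the paper's one-line appeal to the universal property of the quotient, and correctly supplies what that appeal leaves implicit.
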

\begin{proof}
  \noindent (i): Note that the images of \(\Mlp_f\) and \(\Mlp_0\) are the same subsets of \(X\times X\), since \(f\) is invariant. Let \(\iota\colon X\times_{f,Y,f}X\hookrightarrow X\times X\) be the inclusion map. Then \(\Mlp_f=\Mlp_0\circ \iota\). Since \(\iota\) is one-to-one, \(\Mlp_f\) is one-to-one if and only if \(\Mlp_0\) is one-to-one.

\noindent (ii): The first claim follows from the fact that \(f\) and \([f]\) have the same images. The second claim follows from the universal property of the quotient map of topological spaces.

\noindent (iii): Firstly, assume that \([f]\) is one-to-one. Let \((x,z)\in X\times_{f,Y,f}X\) be given, then \(f(x)=f(z)\). Now the fact that \([f]([x])=f(x)=f(z)=[f]([z])\) along with the injectivity of \([f]\) implies that \([x]=[z]\), that is, \(x=\gamma z\) for some \(\gamma\in G\). Thus \((x,z)=(\gamma z, z)=\Mlp_f(\gamma,z)\).

Conversly, assume that \(\Mlp_f\) is surjective. Let \([x],[z]\in G\7 X\) be such that \([f]([x])=[f]([z])\). Then \(f(x)=f(z)\) and we see that \((x,z)\in X\times_{f,Y,f}X\). Now the surjectivity of \(\Mlp_f\) gives \(\gamma\in G\) such that \((\gamma z,z)=\Mlp_f(\gamma,z)=(x,z)\), thus \([x]=[z]\).
\end{proof}

\begin{definition}
  \label{def:trans-act}
Let \(G\) be a groupoid, \(X\) a \(G\)\nb-space, and \(Y\) a space. For a \(G\)\nb-invariant map \(f\colon X\to Y\) we say that the action of \(G\) on \(X\) is \emph{transitive over \(f\)} if one of the following equivalent conditions hold:
\begin{enumerate}[i), leftmargin=*]
\item the map \(\Mlp_f\) is surjective, that is, for all \((x,x')\in X\times_{f,Y,f}X\), there is \(\gamma\in G\) with \(\gamma x'=x\);
\item the map \([f]\colon G\7 X\to Y\) induced by \(f\) is one-to-one.
\end{enumerate}
\end{definition}
\noindent Lemma~\ref{lem:m-and-quotient-of-sx}~{(iii)} shows that the two conditions in Definition~\ref{def:trans-act} are equivalent. We say that an action of a groupoid \(G\) on a space \(X\) is transitive if it is transitive over the constant map \(X\to{\Pt}\). A groupoid is transitive if and only if the obvious action of the groupoid on its space of units is transitive.
\smallskip

Let \(G\) and \(H\) be groupoids, and \(X\) a \(G\)-\(H\)\nb-bispace. The conditions that \(\Mlp_{s_X}\colon G\times_{\base}X\to X\times_{s_X,\base[H],s_X}X\) is a homeomorphism, or the map \([s_X]\colon G/X\to \base[H]\) is one-to-one (or a homeomorphism) appear in many works concerning equivalences, generalised morphisms, and actions of groupoids. We revisit these conditons in the following remark.
\begin{remark}
  \label{rem:relation-between-sX-quo-sX-Mlp}
Let \(X\) be a \(G\)\nb-\(H\)-bispace for groupoids \(G\) and \(H\). Let \(s_X\colon X\to \base[H]\) be an open surection. Recall the following one-sided classical conditions in for a groupoid equivalence in~\cite{Muhly-Renault-Williams1987Gpd-equivalence}, namely,
\begin{enumerate}[i)]
\item The action of \(G\) on \(X\) is free.
\item The action of \(G\) on \(X\) is proper.
\item The map \(s_X\) induces an isomorphism \([s_X]\colon G/X\to \base[H]\).
\end{enumerate}
  
\noindent 1) Since \(s_X\) is an open surjection, due to Lemma~\ref{lem:m-and-quotient-of-sx}{(ii)}. Hence Condition~{(iii)} above is equivalent to, and can be replaced by, the one that the action of \(G\) on \(X\) is transitive over \(s_X\).

\noindent 2) As the action of \(G\) on \(X\) is free and proper, Remark~\ref{rem:prop-act-im-closed} implies that \(\Mlp_f\) is an isomorphism onto its image \(\Img(G\times_{\base}X)\). But the action of \(G\) on \(X\) is transitive over \(s_X\), hence \(\Img(G\times_{\base}X)= X\times_{s_X,\base[H],s_X}\), that is, \(\Mlp_{s_X}\) is an isomorphism--- this is one of the classical conditions that is given as an alternate to the third one above, and as we see, in present situation, this condition also is equivalent to that the action of \(G\) on \(X\) is transitive over \(s_X\).
\end{remark}

\subsection{Locally free actions of groupoids}
\label{sec:locally-free-actions}

Let \(X\) be a left \(G\)\nb-space where \(G\) is a groupoid. For \(x\in X\), the isotropy group of \(x\), which we denote by \(\Ist(x)_G\), is the group \(\{\gamma\in G: \gamma x=x\}\). The element \(r_X(x)\)  is the unit in \(\Ist(x)_G\).

Let \(G\) and \(X\) be as above, and let \(\gamma\in G\). We say that the action of \(G\) is \emph{locally free at \(\gamma\)} if there is a neighbourhood \(U\subseteq G\) of \(\gamma\) with the property that for all \(\eta\in U\) and \(x\in X\), \(\gamma x=\eta x\) implies \(\gamma=\eta\). In this case, we say that ` \(\gamma\) acts freely on \(X\) in the neighbourhood \(U\)'. Caution: this nomenclature does not mean that for\emph{ any} two elements \(\eta,\delta\in U\) and any \(x\in X\) with \(\eta x= \delta x\implies \eta =\delta\); the statement is true if and only if one of \(\eta\) or \(\eta'\) is \(\gamma\).
 \begin{lemma}
  \label{lem:loc-free-act}
Let \(G\) be a groupoid and \(X\) a \(G\)\nb-space. Assume that the momentum map \(r_X\) is surjective. Then the following statements are equivalent.
\begin{enumerate}[i)]
\item The action of \(G\) on \(X\) is locally free at every \(u\in \base\).
\item The action of \(G\) on \(X\) is locally free at every \(\gamma\in G\).
\item The isotropy group of every \(x\in X\) is discrete.
\end{enumerate}
\end{lemma}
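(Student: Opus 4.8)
The plan is to base everything on one algebraic observation and then deduce the three implications from it. If \((\gamma,x),(\eta,x)\in G\times_{\base}X\) satisfy \(\gamma x=\eta x\), then comparing source and range maps forces \(s_G(\gamma)=s_G(\eta)=r_X(x)\) and \(r_G(\gamma)=r_X(\gamma x)=r_X(\eta x)=r_G(\eta)\); hence \(\gamma\inverse\eta\) is defined, lies in the isotropy group \(\Ist(x)_G\), and equals the unit \(r_X(x)\) exactly when \(\gamma=\eta\). Conversely, if \(r_G(\eta)\neq r_G(\gamma)\) then \(\gamma x\) and \(\eta x\) lie in different \(r_X\)\nb-fibres, so \(\gamma x=\eta x\) cannot hold; this lets me discard such \(\eta\) throughout. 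With this in hand I would prove \((\mathrm{ii})\Rightarrow(\mathrm{i})\) trivially, establish \((\mathrm{i})\Leftrightarrow(\mathrm{iii})\), and close with a translation argument for \((\mathrm{i})\Rightarrow(\mathrm{ii})\).

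Since \(\base\subseteq G\), every unit is an element of \(G\), so \((\mathrm{ii})\Rightarrow(\mathrm{i})\) needs no argument. For \((\mathrm{i})\Rightarrow(\mathrm{iii})\) I would fix \(x\in X\), set \(u=r_X(x)\), and take the neighbourhood \(V\subseteq G\) of \(u\) from local freeness at \(u\); testing its defining property against the single point \(x\) gives \(V\cap\Ist(x)_G=\{u\}\), so the unit is isolated in the topological group \(\Ist(x)_G\), which makes it discrete. For \((\mathrm{i})\Rightarrow(\mathrm{ii})\) I would use translation: given \(\gamma\), put \(u=s_G(\gamma)\) and let \(V\) be the isolating neighbourhood at \(u\). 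Left multiplication \(\eta\mapsto\gamma\inverse\eta\) is continuous on the range\nb-fibre \(r_G\inverse(r_G(\gamma))\) and sends \(\gamma\) to \(u\in V\), so there is an open \(U\ni\gamma\) in \(G\) with \(\gamma\inverse\bigl(U\cap r_G\inverse(r_G(\gamma))\bigr)\subseteq V\). If \(\eta\in U\) and \(\gamma x=\eta x\), the backbone computation gives \(r_G(\eta)=r_G(\gamma)\), hence \(\gamma\inverse\eta\in V\cap\Ist(x)_G=\{u\}\), so \(\eta=\gamma u=\gamma\); elements \(\eta\) with \(r_G(\eta)\neq r_G(\gamma)\) are harmless by the opening remark. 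Thus \(U\) witnesses local freeness at \(\gamma\).

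The step I expect to be the main obstacle is \((\mathrm{iii})\Rightarrow(\mathrm{i})\). Running the contrapositive, failure of local freeness at a unit \(u\) produces a net \(\eta_\alpha\to u\) with \(\eta_\alpha\neq u\) together with points \(x_\alpha\in r_X\inverse(u)\) satisfying \(\eta_\alpha\in\Ist(x_\alpha)_G\). Discreteness of a \emph{single} isotropy group contradicts this only if the witnesses \(x_\alpha\) can be pinned to one fixed \(x\); the trouble is that they range over the entire fibre \(r_X\inverse(u)\), so fibrewise discreteness must be upgraded to a neighbourhood of \(u\) that isolates the unit section \emph{uniformly} across \(r_X\inverse(u)\). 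Surjectivity of \(r_X\) only realises \(u\) as one value \(r_X(x)\) and so controls a single point of the fibre, which is not enough by itself; for an action whose stabilisers shrink along a non\nb-compact fibre, pointwise discreteness genuinely fails to force uniform isolation. I would therefore concentrate the effort here and scrutinise what additional structure on the fibres (beyond the stated hypotheses) is needed to convert the pointwise statement into a single neighbourhood valid across the whole fibre.
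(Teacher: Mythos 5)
Your three proven implications are correct and essentially reproduce the paper's own arguments. The implication (ii)\(\implies\)(i) is immediate in both. Your proof of (i)\(\implies\)(iii) --- testing the isolating neighbourhood at \(u=r_X(x)\) against the single point \(x\) and then using homogeneity of the topological group \(\Ist(x)_G\) --- is the paper's argument. Your translation proof of (i)\(\implies\)(ii) differs only cosmetically: the paper chooses \(U_1\ni\gamma\inverse\) and \(U_2\ni\gamma\) with \(\textup{mult}_G(U_1\times_{\base}U_2)\subseteq Z\), where \(Z\) is the isolating neighbourhood of \(s_G(\gamma)\), and takes \(U=\textup{inv}_G(U_1)\cap U_2\), while you pull \(Z\) back along left translation by \(\gamma\inverse\) restricted to the range fibre; both arguments reduce \(\gamma x=\eta x\) to \((\gamma\inverse\eta)x=x\) with \(\gamma\inverse\eta\) trapped in \(Z\).

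The interesting part is your refusal to prove (iii)\(\implies\)(i), and your diagnosis is exactly right: this is where the paper's own proof fails. For a given unit \(u\) the paper picks one point \(x\) with \(r_X(x)=u\) (the only use of surjectivity), an open \(U\) with \(U\cap\Ist(x)_G=\{u\}\), and then asserts that ``if \(\gamma\in U\) with \(\gamma y=y\), then \(\gamma\in\Ist(x)_G\)''. That step is a non sequitur: such a \(\gamma\) lies in \(\Ist(y)_G\), and discreteness of each \(\Ist(y)_G\) separately provides no single neighbourhood of \(u\) that works for every \(y\in r_X\inverse(u)\) simultaneously. Moreover, the implication is genuinely false under the stated hypotheses, by precisely the mechanism you describe. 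Let \(G=\R\) (a group, so \(\base=\Pt\) and \(r_X\) is automatically surjective) act by translation on \(X=\bigsqcup_{n\geq 1}\R/\tfrac{1}{n}\Z\). Every isotropy group is \(\tfrac{1}{n}\Z\), hence discrete, so (iii) holds; but any neighbourhood of \(0\in\R\) contains \(1/n\) for large \(n\), and \(1/n\) fixes every point of the \(n\)-th circle, so (i) fails. (Compare Example~\ref{exa:R-S-1}, where the conclusion does hold because all isotropy groups are equal to \(\Z\), so the isolation is uniform over the fibre.) In summary, your proposal establishes everything in the lemma that is actually true, namely (i)\(\iff\)(ii)\(\implies\)(iii), and correctly identifies that the converse requires an additional uniformity hypothesis on the isotropy groups over each fibre of \(r_X\); the gap lies in the paper's proof, not in your approach.
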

\begin{proof}
We prove that \((i)\mathrel{\iff}(ii)\) and \((i)\iff (iii)\).

\noindent \((i)\iff (ii)\): The implication \((ii)\mathrel{\implies}(i)\) is clear. To prove the converse, let \(\gamma\in G\) be given. Let \(Z\subseteq G\) be an open neighbourhood of \(s_G(\gamma)\) in which \(s_G(\gamma)\) acts freely on \(X\). Let \(\textup{mult}_G\colon G\times_{s_G,\base,r_G}G\to G\) denote the multiplication map, \(\textup{mult}_G(\eta,\eta')= \eta\eta'\). Then \( \textup{mult}_G(\gamma\inverse, \gamma)=s_G(\gamma)\). Using the continuity of \(\textup{mult}_G\), choose open neighbourhoods \(U_1\subseteq G\) of \(\gamma\inverse\), and \(U_2\subseteq G\) of \(\gamma\) with \(\textup{mult}_G(U_1\times_{\base}U_2)\subseteq Z\).
 Let \(\textup{inv}_G\colon G\to G\) denote the inverting function \(\eta\mapsto\eta\inverse\). Then \(U=\textup{inv}_G(U_1)\cap U_2\) is an open neighbourhood containing \(\gamma\), to see this, we observe that \(\gamma=(\gamma\inverse)\inverse\in \textup{inv}_G(U_1)\) and \(\gamma \in U_2\), hence \(\gamma\in U\). And \(U\) is open because \(\textup{inv}_G\) is a homeomorphism.

We claim that \(\gamma\) acts freely on \(X\) in \(U\). Let \(\eta\in U\) and \(x\in X\) be such that \(\gamma x=\eta x\), that is,  \(\eta\inverse\gamma x=\textup{mult}_G(\eta\inverse,\gamma)x=x\). Since \(\eta\in U = \textup{inv}_G(U_1)\cap U_2\), we have that \(\eta\inverse \in U_1\). Then \(\textup{mult}_G(\eta\inverse, \gamma)\in Z\). But \(Z\) acts freely on \(X\). Hence \(\eta\inverse\gamma x= x\) implies \(\eta\inverse\gamma = r_X(x)\), that is, \(\gamma=\eta\).
\smallskip

\noindent \((i)\iff (iii)\):
 Assume that the action of \(G\) is locally free at every \(\gamma\in G\). Then, for each \(x\in X\), we produce a neighbourhood of the unit \(r_X(x)\) in \(\Ist(x)_G\) which contains only the unit. Let \(x\in X\) be given. Let \(U\subseteq G\) be a neighbourhood of the unit \(r_X(x)\in \Ist(x)_G\) in which \(r_X(x)\) acts freely on \(X\). Then \(U\cap\Ist(x)_G=\{r_X(x)\}\) because for every \(\eta\in U\), \(\eta x = x\) is equivalent to \(\eta x = r_X(x)x = x\) which implies that \(\eta=r_X(x)\).

Conversely, assume that \(\Ist(x)_G\) is discrete for all \(x\) in \(X\). Let \(u\in \base\) be given. Then \(u=r_X(x)\) for some \(x\in X\). Let \(U\subseteq G\) be an open neighbourhood with \(U\cap \Ist(x)_G=\{r_X(x)\}\). Then \(u\) acts freely on \(X\) in the  neighbourhood \(U\). Because if \(\gamma\in U\) with \(\gamma y =  y\), then \(\gamma\in\Ist(x)_G\). Thus \(\gamma\in (U\cap\, \Ist(x)_G)=\{r_X(x)\}\). Hence \(\gamma = r_X(x)\).
\end{proof}

Assume that \(G\) and \(X\) are as in Lemma~\ref{lem:loc-free-act}. Then the proof of (i)\(\implies\)(ii) above shows that for \(x\in X\) the isotropy group \(\Ist(x)_G\) is discrete if and only if the action of \(G\) is locally free at \(r_X(x)\in\base\), or equivalently, for \(x\in X\) the group \(\Ist(x)_G\) is discrete if and only if the action of \(G\) is locally free at \(\gamma\in G\) for some \(\gamma\in G_{r_X(x)}\).

\begin{definition}
  \label{def:loc-free-act}
  \begin{enumerate}[i),leftmargin=*]
  \item An action of a groupoid \(G\) on a space \(X\) is called \emph{locally free} if any of \((i)\)---\((iii)\) in Lemma~\ref{lem:loc-free-act} holds.
  \item An action of a groupoid \(G\) on a space \(X\) is called \emph{locally locally free} if every \(\gamma\in G\) has a neighbourhood \(U\subseteq G\) and every \(y\in X^{s_G(\gamma)}\) has a neighbourhood \(V\subseteq X\) such that for all \(\eta\in U\) and \(x\in V\), \(\gamma x=\eta x\) implies \(\gamma=\eta\).
  \end{enumerate}
\end{definition}
Clearly, a locally free action is locally locally free. For a group, the converse holds. To see it, one may use an argument as in the proof of Lemma~\ref{lem:loc-free-act}{(i)\(\implies\)(iii)}, and observe that all isotropy groups are discrete.
\begin{example}
  \label{exa:R-S-1}
The action of \(\R\) on the unit circle \(\UCr\), \(r\cdot\me^{2\pi\mi\theta}=\me^{2\pi\mi(\theta+r)}\) is not free; here \(r\in \R\) and \(\me^{2\pi\mi\theta}\in \UCr\). But this action is locally free since \(\Ist(\me^{2\pi\mi\theta})_{\R}\iso \Z\) is discrete for all \(\me^{2\pi\mi\theta} \in \UCr\).
\end{example}

Let \(G\) be a groupoid acting on a space \(X\). For \(\gamma\in G\), we say that the action of \(G\) is \emph{strongly locally free at \(\gamma\)} if there is a neighbourhood \(U\subseteq G\) of \(\gamma\) with the property that for all \(\eta,\delta\in U\) and \(x\in X\), \(\eta x= \delta x\) implies that \(\eta = \delta\). We say that `the neighbourhood \(U\) (of \(\gamma\))' acts freely on \(X\).

 \begin{lemma}
  \label{lem:str-loc-free-act}
Let \(G\) be a groupoid and \(X\) a \(G\)\nb-space. Assume that the momentum map \(r_X\) is surjective. Let \(Y\) be a space and \(f\colon X\to Y\) a \(G\)\nb-invariant map. Then the following statements are equivalent.
\begin{enumerate}[i)]
\item The action of \(G\) on \(X\) is strongly locally free at every \(u\in \base\).
\item Every unit \(u\in \base\) has a neighbourhood \(U\) in \(G\) such that such that the restriction of the map \(\Mlp_f\) to \(U\times_{\base}X\) is one-to-one.
\item The action of \(G\) on \(X\) is strongly locally free at every \(\gamma\in G\).
\item Every element \(\gamma\in G\) has a neighbourhood \(U\) in \(G\) such that the restriction of the map \(\Mlp_f\) to \(U\times_{\base}X\) is one-to-one.
\end{enumerate}
\end{lemma}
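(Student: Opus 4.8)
The plan is to split the four-way equivalence into two purely definitional equivalences, (i)$\iff$(ii) and (iii)$\iff$(iv), obtained by unravelling what injectivity of $\Mlp_f$ means, together with the genuinely geometric equivalence (i)$\iff$(iii), of which only (i)$\implies$(iii) requires real work.

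\textbf{Definitional equivalences.} For any open $U\subseteq G$ I would observe that the restriction of $\Mlp_f$ to $U\times_{\base}X$ is one-to-one exactly when, for all $\eta,\delta\in U$ and $x\in X$ with $s_G(\eta)=s_G(\delta)=r_X(x)$, the equality $\eta x=\delta x$ forces $\eta=\delta$. Indeed $\Mlp_f(\eta,x)=(\eta x,x)$, so two points of $U\times_{\base}X$ have the same image precisely when their $X$-coordinates agree and $\eta x=\delta x$; injectivity of the restriction is then exactly the stated implication. The codomain $X\times_{f,Y,f}X$ rather than $X\times X$ is irrelevant, since $\Mlp_f$ and $\Mlp_0$ differ only by the injection $\iota\colon X\times_{f,Y,f}X\hookrightarrow X\times X$ (namely $\Mlp_0=\iota\circ\Mlp_f$), so the restriction is one-to-one for $\Mlp_f$ iff it is for $\Mlp_0$ and is independent of $f$, exactly as in Lemma~\ref{lem:m-and-quotient-of-sx}(i). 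Taking $U$ to range over neighbourhoods of a unit $u\in\base$ gives (i)$\iff$(ii) verbatim; taking $U$ a neighbourhood of an arbitrary $\gamma\in G$ gives (iii)$\iff$(iv).

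\textbf{The geometric equivalence.} Here (iii)$\implies$(i) is immediate, since every unit is an element of $G$. For (i)$\implies$(iii) I would follow the pattern of Lemma~\ref{lem:loc-free-act}(i)$\implies$(ii), but comparing two nearby elements rather than a single element with $\gamma$. Given $\gamma\in G$, set $u=s_G(\gamma)$ and let $Z\subseteq G$ be a neighbourhood of $u$ witnessing strong local freeness at $u$. The key computation is: if $\eta,\delta$ lie near $\gamma$, $x\in X$, and $\eta x=\delta x$, then reading off range maps gives $r_G(\eta)=r_X(\eta x)=r_X(\delta x)=r_G(\delta)$, so $\delta\inverse\eta$ is composable, fixes $x$ (as $(\delta\inverse\eta)x=\delta\inverse(\delta x)=x$), and satisfies $s_G(\delta\inverse\eta)=s_G(\eta)=r_X(x)$. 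I then compare the isotropy element $\delta\inverse\eta$ with the unit $r_X(x)$: both send $x$ to $x$, so provided both lie in $Z$, strong local freeness at $u$ yields $\delta\inverse\eta=r_X(x)$, i.e.\ $\eta=\delta$.

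\textbf{Choosing the neighbourhood and the main obstacle.} It remains to pick $W\ni\gamma$ guaranteeing both membership conditions. For $r_X(x)=s_G(\eta)\in Z$, continuity of $s_G$ with $s_G(\gamma)=u\in Z$ gives a neighbourhood $W_0$ of $\gamma$ with $s_G(W_0)\subseteq Z$. For $\delta\inverse\eta\in Z$, I use continuity of $(\delta,\eta)\mapsto\delta\inverse\eta=\textup{mult}_G(\textup{inv}_G(\delta),\eta)$ together with $\gamma\inverse\gamma=u\in Z$: choose $V_1\ni\gamma\inverse$ and $V_2\ni\gamma$ with $\textup{mult}_G(V_1\times_{\base}V_2)\subseteq Z$, and set $W_1=\textup{inv}_G(V_1)\cap V_2$. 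Then $W=W_0\cap W_1$ works, for if $\eta,\delta\in W$ and $\eta x=\delta x$, then composability holds, $\delta\inverse\in V_1$ and $\eta\in V_2$ give $\delta\inverse\eta\in Z$, and $s_G(\eta)\in Z$; strong local freeness at $u$ forces $\eta=\delta$, so $W$ witnesses strong local freeness at $\gamma$. I expect the main obstacle to be exactly this bookkeeping: ensuring all composites $\delta\inverse\eta$ and sources $s_G(\eta)$ are defined and land in the \emph{single} neighbourhood $Z$ of $u$, and justifying that one compares the isotropy element $\delta\inverse\eta$ against the unit $r_X(x)$ (rather than against a fixed group element as in the free case). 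This last point is precisely where the strength in ``strongly locally free'' is consumed.
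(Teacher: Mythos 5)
Your proof is correct and takes essentially the same approach as the paper: (i)\(\iff\)(ii) and (iii)\(\iff\)(iv) by unravelling injectivity of \(\Mlp_f\), (iii)\(\implies\)(i) trivially, and (i)\(\implies\)(iii) by reducing \(\eta x=\delta x\) to \(\delta\inverse\eta x=x\) with \(\delta\inverse\eta\) trapped in the witnessing neighbourhood \(Z\) of \(s_G(\gamma)\) via continuity of multiplication and inversion, exactly as in the paper's adaptation of Lemma~\ref{lem:loc-free-act}. Your extra neighbourhood \(W_0\) guaranteeing \(r_X(x)=s_G(\eta)\in Z\) is sound bookkeeping that the paper leaves implicit: in its construction \(U=\textup{inv}_G(U_1)\cap U_2\) this is automatic, since \(\eta\in U\) gives \(s_G(\eta)=\eta\inverse\eta\in\textup{mult}_G(U_1\times_{\base}U_2)\subseteq Z\).
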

\begin{proof}
It can be readily seen that (i) and (ii) are different phrasing of the same fact, and so are (iii) and (iv).
The implication (iii)\(\implies\)(i) is obvious and the proof of its converse can be written on exactly the same lines as that of the proof of (ii)\(\implies\)(i) of Lemma~\ref{lem:loc-free-act}. We sketch the proof roughly: let \(\gamma \in G\). Let \(Z\subseteq G\) be an open neighbourhood of \(s_X(\gamma)\) that acts freely on \(X\). Let \(U_1\) and \(U_2\) be as in the proof of Lemma~\ref{lem:loc-free-act}. Then as shown in the proof of Lemma~\ref{lem:loc-free-act}, \(U=\textup{inv}_G(U_1)\cap U_2\) is an open neighbourhood of \(\gamma\) in \(G\). We claim that \(U\) acts freely on \(X\). Let \(\eta,\delta\in U\), then \(\eta x=\delta x\iff \delta\inverse \eta x = x\). As in the same proof above, we observe that \(\delta\inverse \eta \in Z\). And since \(Z\) acts freely on \(X\), we must have \(\delta\inverse\eta=r_G(\eta)\), that is, \(\eta=\delta\).
\end{proof}

\begin{definition}
  \label{def:str-loc-free-act}
  \begin{enumerate}[i),leftmargin=*]
  \item An action of a groupoid \(G\) on a space \(X\) is called \emph{strongly locally free} if any one of {(i)}-{(iv)} in Lemma~\ref{lem:str-loc-free-act} above holds.
  \item  An action of a groupoid \(G\) on a space \(X\) is called \emph{locally strongly locally free} if every \(\gamma\in G\) has a neighbourhood \(U\subseteq G\) and every \(y\in X^{s_G(\gamma)}\) has a neighbourhood \(V\subseteq X\) such that for all \(\delta,\eta\in U\) and \(x\in V\), \(\eta x=\delta x\) implies \(\eta=\delta\). 
  \end{enumerate}
\end{definition}
Clearly, a strongly locally free action is locally free as well as locally strongly locally free. As discussed on page~\pageref{page:gp-LF-is-SLF}, for groups, all these four notions of free action coincide.
\begin{example}
  \label{exa:local-free-and-not-str-loc-free-action}
Let
\begin{align*}
G&=\{(0,0)\}\cup\{(1/n,i/n^2)\in \R^2: n\in\N \text{ and } 0\leq i\leq n\},\\
\base&\defeq  \{(0,0)\}\cup \{(1/n,0)\in \R^2: n\in\N\}
\end{align*}
and equip both sets with the subspace topology coming from \(\R^2\); then \(\base\) is a subspace of \(G\). We think of \(G\) as a group bundle over the space \(\base\): for \(n\in\N\), the fibre over \((1/n,0)\) is
\[
\{(1/n,i/n^2): 0\leq i\leq n\}=\{(1/n,0),(1/n,1/n^2),\dots,(1/n,n/n^2)\}
\]
which we identify with the finite cyclic group \(\Z/(n+1)\Z\) of order \(n\) by the map \(i/n^2\mapsto [i]\in \Z/(n+1)\Z\). The fibre over the origin \((0,0)\) is the trivial group. One may see that this group bundle is a continuous group bundle. We make this bundle into a groupoid in the standard way: \(\base\) is a the space of units, the projection onto the first factor, \(G\to \base\), \((x,y)\mapsto (x,0)\), is the source as well as range map for \(G\). The composite of \((1/n,i/n^2),(1/n,j/n^2)\in G\) is \((1/n,[i+j]/n^2)\) and the inverse of \((1/n,i/n^2)\) is \((1/n,(n+1-i)/n)\). For the fibre over zero, the operations are defined in the obvious way.

Let \(X=\base\) and \(s_X\colon \base\to G\) the inclusion map. Then we claim that the trivial left action of \(G\) on \(X\) is locally free but not strongly locally free. The action is locally free because for every \((x,0)\in\base\) the isotropy group \(\Ist_G((x,0))\) is discrete, to be precise, for \(n\in\N\)  \(\Ist_G((1/n,0))\iso\Z/(n+1)\Z\), and \(\Ist_G((0,0))\) is the trivial group. Now Lemma~\ref{lem:loc-free-act} tells us that action of \(G\) is locally free. We claim that given no neighbourhood of \((0,0)\) in \(G\) acts freely on \(X\).

Let \(U\) be an open neighbourhood of \((0,0)\) and let \(U'\subseteq \R^2\) be an open set such that \(U=U'\cap G\). Let \(\epsilon>0\) be sufficiently small so that, \(W'\defeq (-\epsilon,\epsilon)\times(-\epsilon,\epsilon)\), an open square centred at the origin is contained in \(U'\). Put \(W=W'\cap G\), then \(W\subseteq U\) is an open set containing \((0,0)\). Since the sequences  \(\{(1/n,1/n)\}_{n\in\N}\) and \(\{(1/n,0)\}_{n\in\N}\) converge to \((0,0)\) in \(G\) as well as \(\R^2\), for a sufficiently large \(n\) we have \((1/n,0),(1/n, 1/n)\in W\subseteq U\). Thus for \((1/n,0),(1/n, 1/n)\in G\) and for \(x\defeq(1/n,0)\in X\), we have
\[
x=(1/n,0)\cdot x=(1/n,1/n)\cdot x;
\]
here \(\cdot\) is the trivial action of \(G\) on \(X\). Thus \(U\) does not act strongly freely on \(X\).

Furthermore, the contrapositive of Proposition~\ref{prop:etale-gpd-loc-free-act} says that \(G\) is not {\etale}. Thus \(G\) is a groupoid in which the fibre over every unit is discrete but the groupoid is not {\etale}.
\end{example}

In this article, locally strongly locally free actions are what interest us the most.
 \begin{lemma}
  \label{lem:loc-str-loc-free-act}
Let \(G\) be a groupoid and \(X\) a \(G\)\nb-space. Assume that the momentum map \(r_X\) is surjective. Let \(f\colon X\to Y\) be a \(G\)\nb-invariant map to a space \(Y\). Then the following statements are equivalent.
\begin{enumerate}[i), leftmargin=*]
\item The action of \(G\) on \(X\) is locally strongly locally free at every \(\gamma\in G\).
\item The map \(\Mlp_f\) is locally one-to-one.
\end{enumerate}
 Additionally, if \(\Mlp_f\) is an open map onto its image, then the following statements are equivalent.
\begin{enumerate}[resume, leftmargin=*, label=\roman*)]
\item The action of \(G\) on \(X\) is locally strongly locally free at every \(\gamma\in G\).
\item The map \(\Mlp_f\) is a local homeomorphism onto its image.
\end{enumerate}
\end{lemma}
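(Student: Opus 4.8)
The plan is to reduce everything to a single observation about \(\Mlp_f\) on a base of the fibre product, and then to bootstrap local injectivity to a local homeomorphism using the openness hypothesis. First I would record the elementary fact, already noted in the preliminaries, that the sets \(U\times_{\base}V\) with \(U\subseteq G\) and \(V\subseteq X\) open form a base for the topology of \(G\times_{\base}X\); hence a neighbourhood of a composable pair \((\gamma,x)\) may always be shrunk to one of this form. The defining formula \(\Mlp_f(\eta,x)=(\eta x,x)\) shows that \(\Mlp_f(\eta,x)=\Mlp_f(\delta,x')\) forces \(x=x'\) and \(\eta x=\delta x\); therefore \(\Mlp_f\) restricted to \(U\times_{\base}V\) is one-to-one precisely when, for all \(\eta,\delta\in U\) and \(x\in V\), the equality \(\eta x=\delta x\) implies \(\eta=\delta\)---which is exactly the condition of Definition~\ref{def:str-loc-free-act}(ii) attached to the pair \((U,V)\). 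Consequently (i) and (ii) are just two phrasings of the same fact: that every \(\gamma\) together with every \(x\in X^{s_G(\gamma)}\) admits neighbourhoods \(U\ni\gamma\), \(V\ni x\) on which the freeness condition holds is the same as saying that \(\Mlp_f\) is one-to-one on the basic neighbourhood \(U\times_{\base}V\) of \((\gamma,x)\), i.e.\ that \(\Mlp_f\) is locally one-to-one. I would write out both implications, but each is merely reading the same witnesses in the two directions.

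For the second equivalence, assume in addition that \(\Mlp_f\) is open onto its image. By the first equivalence it suffices to show that, under this hypothesis, \(\Mlp_f\) is locally one-to-one if and only if it is a local homeomorphism onto its image. One direction needs no openness: a local homeomorphism onto its image is in particular locally injective, since each homeomorphism piece is one-to-one. For the converse, fix a point of \(G\times_{\base}X\) and, by local injectivity, choose a basic neighbourhood \(N\) on which \(\Mlp_f\) is one-to-one. Then \(\Mlp_f|_N\colon N\to\Mlp_f(N)\) is a continuous bijection, and it remains only to see that it is open onto \(\Mlp_f(N)\). This is where the global hypothesis enters: for open \(O\subseteq N\) the set \(\Mlp_f(O)\) is open in \(\Img(\Mlp_f)\), and since \(\Mlp_f(O)\subseteq\Mlp_f(N)\subseteq\Img(\Mlp_f)\) with \(\Mlp_f(N)\) carrying the subspace topology, \(\Mlp_f(O)=\Mlp_f(O)\cap\Mlp_f(N)\) is open in \(\Mlp_f(N)\). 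Hence \(\Mlp_f|_N\) is a homeomorphism onto its image, so \(\Mlp_f\) is a local homeomorphism onto its image.

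The argument has no deep obstacle; its content is careful bookkeeping of definitions. The step deserving most attention is the last one, where the hypothesis ``open onto the whole image \(\Img(\Mlp_f)\)'' must be localised to ``open onto the image \(\Mlp_f(N)\) of the chosen piece''; this hinges on the point-set fact that \(\Mlp_f(N)\) inherits its topology from \(\Img(\Mlp_f)\). A lesser point is to keep the quantifiers of Definition~\ref{def:str-loc-free-act}(ii) aligned with the pointwise nature of local injectivity, which is exactly what the base \(\{U\times_{\base}V\}\) of the fibre product makes transparent.
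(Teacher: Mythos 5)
Your proof is correct and follows essentially the same route as the paper: both arguments read conditions (i) and (ii) through the base of basic open sets \(U\times_{\base}V\) of \(G\times_{\base}X\), on which injectivity of \(\Mlp_f\) is literally the freeness condition of Definition~\ref{def:str-loc-free-act}(ii). The paper dismisses the second equivalence as ``an easy exercise''; your localisation of the global hypothesis ``open onto \(\Img(\Mlp_f)\)'' to ``open onto \(\Mlp_f(N)\)'' via the subspace topology is exactly the intended argument and fills that gap correctly.
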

\begin{proof}
  Assume that the action of \(G\) is locally strongly locally free at every element \(\gamma\). Given a point \((\gamma,x)\in G\times_{\base}X\), we need to find a neighbourhood of it such that the restriction of \(\Mlp_f\) to this neighbourhood is one-to-one. Let \(U\) be a neighbourhood of \(\gamma\) and \(V\subseteq X\) be the one of \(x\) with the property that for all \(\delta,\eta\in U\) and \(x\in V\), \(\eta x=\delta x\implies\eta=\delta\). Then \(U\times_{\base}V\) is the neighbourhood to which the restriction \(\Mlp_f|_{U\times_{\base}X}\) is one-to-one.

Conversely, let \(\gamma\in G\) is given. Choose \(x\) in \(X\) which is composable with \(\gamma\); this can be done since \(r_X\) is assumed to be surjective. Let \(W\) be an open neighbourhood of \((\gamma,x)\) in \(G\times_{\base}X\) such that \(\Mlp_f|_{W}\) is one-to-one. Let \(U\subseteq G\) and \(V\subseteq X\) be open neighbourhoods of \(\gamma\) and \(x\), respectively, such that \(U\times_{\base}V\) is a basic open neighbourhood of \((\gamma,x)\) and \(U\times_{\base}B\subseteq V\). Then \(U\) and \(V\) are the required neighbourhoods which prove the claim.

The rest of the proof is an easy exercise.
\end{proof}

One can see that a free action is locally free, locally locally free, strictly locally free and locally strictly locally free.

It is well-known that action of a group on a space is called locally free if the isotropy group of each point of the space is discrete, for example, see~\cite{Candel-Conlon2000Foliations-I}*{Definition 11.3.7} which discusses the special case of locally free action of a group on a foliation. It can be shown that for groups a locally free action as in Definition~\ref{def:loc-free-act} and a strictly locally free one as in Definition~\ref{def:str-loc-free-act} are equivalent:\label{page:gp-LF-is-SLF} assume that \(G\) is a group which acts on a space \(X\) and the action is locally free. Thus for each point in \(X\) the corresponding isotropy group is discrete. Let \(\gamma\in G\) be given. Choose \(x\in X\) and let \(U'\subseteq G\) be a neighbourhood of \(e\), the unit of \(G\), that intersects \(\Ist(x)_G\) only at \(e\). Let \(U\subseteq U'\) be a neighbourhood such that \(U\inverse U\subseteq U'\). Then \(\gamma U\) is the neighbourhood of \(\gamma\) which acts freely on \(X\). This is because, if \(\eta,\delta\in \gamma U\), then \(\eta'\defeq \gamma\inverse \eta\), \(\delta'\defeq\gamma\inverse \delta\) and \({\delta'}\inverse\eta'\) are in \(U'\). And then \(\eta' x = \delta' x \iff {\delta'}\inverse \eta' x=x\). Which implies \(\eta'=\delta'\) and hence \(\eta=\delta\). The neighbourhood \(U\) of \(U'\) above can be constructed, for example, see~\cite{Folland1995Harmonic-analysis-book}*{Proposition 2.1 b}.

The proof above also shows that, for groups, a locally locally free action is locally strongly locally free. Let \(G\) and \(X\) be as above, and assume that the action of \(G\) on \(X\) is locally locally free. Let \(U'\times V\subseteq G\times X\) a basic neighbourhood of \((e,x)\in G\times X\) such that \(\Mlp_0\) is a one-to-one when restricted to \(U\times \). Now the above proof goes through with \(X\) replaced by \(V\).

Thus for groups, the notion of locally free action is equivalent to a locally locally free action (see page~\pageref{def:loc-free-act}) and a strongly locally free action, and a locally locally free action is same as a locally strongly free action. This allows us to conclude that for groups, a strongly locally free action is same as a locally strongly free action. This shows that the four notions of locally free action coincide for groups.

The proofs above, however, do not work for groupoids. The reason is that there may be more than a single unit in a groupoid. As mentioned earlier, locally strictly locally free actions interest us the most. Since, in the case of groups, every locally free action is a locally strictly free action, we pocket a large class of examples of locally strictly locally free actions.

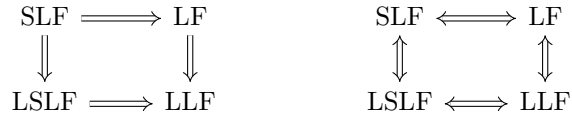
\begin{figure}[htb]
  \centering
\[
\begin{tikzcd}[scale=2]
\text{SLF}\arrow[Rightarrow]{r}\arrow[Rightarrow]{d}
 & \text{LF} \arrow[Rightarrow]{d}& &\text{SLF}\arrow[Leftrightarrow]{r}\arrow[Leftrightarrow]{d}& \text{LF} \arrow[Leftrightarrow]{d}\\
\text{LSLF} \arrow[Rightarrow]{r} & \text{LLF}& &\text{LSLF} \arrow[Leftrightarrow]{r} & \text{LLF}
 \end{tikzcd}
\]
\caption{The different types of free actions and their interrelations; the first square shows the case of groupoids, and the second one shows the case of groups; here S stands for \emph{strognly}, L for \emph{locally} and F for \emph{free}.}
\label{fig:LF-actions-rel}
\end{figure}
Speaking of the square for groupoid in Figure~\ref{fig:LF-actions-rel}, Example~\ref{exa:local-free-and-not-str-loc-free-action} shows that a locally free action need not be strongly locally free. But as-of-now, we do not know if a locally locally free should be locally free, or if a locally strictly locally free actions should be strictly locally free.
\medskip

Now we turn our attention to a special class of groupoids, namely, {\etale} groupoids. Here is an example first:
\begin{example}
  \label{exa:disc-grp}
Let \(\Z/2\Z=\{-1,1\}\) act on \([-1,1]\) as \(\pm 1\cdot t=\pm 1\) for all \(t\in [-1,1]\). This action is not free since \(\pm 1\cdot 0 =0\). But \(\Ist(t)_{\Z/2\Z}\) is discrete for each \(t\in [-1,1]\). Hence, this action is locally free.

In general, if \(G\) is a discrete group, then any \(G\) action is locally free. Because if \(X\) is a \(G\) space, then for each \(x\in X\), \(\Ist(x)_G\subseteq G\) is discrete. This example generalises very well to {\etale} groupoids. A groupoid is called {\etale} if its range map is a local homeomorphism.
\end{example}

Recall from~\cite{Renault1980Gpd-Cst-Alg}*{Definition 1.2.6} that a locally compact groupoid is called \(r\)\nb-discrete if its space of units is an open subset. A groupoid is {\etale} if and only if it is \(r\)\nb-discrete and has a Haar system (Proposition~\cite{Renault1980Gpd-Cst-Alg}*{1.2.8}).

\begin{proposition}
  \label{prop:etale-gpd-loc-free-act}
Let \(G\) be an \(r\)\nb-discrete groupoid. Then the action of \(G\) on \(X\) is strongly locally free.
\end{proposition}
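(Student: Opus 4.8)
The plan is to push everything down to the units of $G$, where $r$-discreteness makes the statement almost tautological, and then let Lemma~\ref{lem:str-loc-free-act} carry it back up to arbitrary $\gamma\in G$. By Definition~\ref{def:str-loc-free-act}, the action is strongly locally free as soon as any one of conditions (i)--(iv) of Lemma~\ref{lem:str-loc-free-act} holds, so I would aim only at condition~(i): that the action is strongly locally free at every unit $u\in\base$.

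First I would fix $u\in\base$. Because $G$ is $r$\nb-discrete, its unit space $\base$ is open in $G$, so $U\defeq\base$ is an open neighbourhood of $u$. The claim is then that $U$ acts strongly freely, i.e.\ that $\eta x=\delta x$ forces $\eta=\delta$ whenever $\eta,\delta\in\base$ and $x\in X$. The point is that the equation $\eta x=\delta x$ presupposes that $(\eta,x)$ and $(\delta,x)$ are composable; since $\eta,\delta$ are units, $s_G(\eta)=\eta$ and $s_G(\delta)=\delta$, and composability then reads $\eta=r_X(x)$ and $\delta=r_X(x)$. Hence $\eta=\delta=r_X(x)$, so condition~(i) of Lemma~\ref{lem:str-loc-free-act} holds. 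Invoking the equivalence (i)$\iff$(iii) of that lemma upgrades this to strong local freeness at every $\gamma\in G$, which is exactly the assertion.

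There is really no serious obstacle here: the entire content of the proposition is the single observation that the only unit composable with a given $x\in X$ is $r_X(x)$, and that this unit acts as the identity, so two units with a common action on $x$ must coincide. The only point that needs a moment's care is the passage from units to all of $G$. Lemma~\ref{lem:str-loc-free-act} carries a standing hypothesis that $r_X$ be surjective, but the implication (i)$\implies$(iii) that I use rests only on the continuity of multiplication and inversion (exactly as in the proof of Lemma~\ref{lem:loc-free-act}) and does not use surjectivity, so the argument applies without extra assumptions. As a byproduct, the contrapositive of this proposition is precisely what is used in Example~\ref{exa:local-free-and-not-str-loc-free-action} to conclude that the group bundle constructed there, though it has discrete fibres, is not \etale.
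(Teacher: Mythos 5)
Your proof is correct and is essentially the paper's argument: both take the open set $\base\subseteq G$ (open by $r$\nb-discreteness) as the neighbourhood of each unit, observe that the only unit composable with $x$ is $r_X(x)$ so that units act injectively, and then invoke Lemma~\ref{lem:str-loc-free-act}. The paper merely packages this same observation as the statement that $\Mlp_f$ restricted to $\base\times_{\base}X$ is the composite $\Dia_X\circ(r_X,\Id)\inverse$, hence a homeomorphism onto its image (condition~(ii) of the lemma rather than your condition~(i), which the paper itself notes are two phrasings of the same fact).
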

\begin{proof}
We claim that the restriction of \(\Mlp_f\) to \(\base\times_{\base}X\) is a homeomorphism onto its image. Because the map
\[
    (r_X,\Id)\colon X\to \base\times_{\base}X, \quad x\mapsto (r_X(x),x),
\]
is a homeomorphism, and so is the diagonal embedding
\[
\Dia_X\colon X\hookrightarrow X\times_{f,Y,f} X,\quad x\mapsto (x,x).
\]
Now observe that \(\Mlp_f\) is the composite \(\Dia_X\circ(r_X,\Id)\inverse\); this proves that claim. In this case, Lemma~\ref{lem:str-loc-free-act} implies that the action of \(G\) on \(X\) is strongly locally free.
\end{proof}
\medskip

We conclude this section by introducing a notation. Let \(G\) be a locally compact groupoid, \(X\) a \(G\)\nb-space, and let the \(G\) action be locally strongly locally free. Let \(Y\) be a space and \(f\colon X\to Y\) a \(G\)\nb-invariant map. Assume that \(\Mlp_f\) is open onto its image. Then every \((\gamma,x)\in G\times_{\base}X\) has a basic neighbourhood \(A\times_{\base}B \subseteq G\times_{\base}X\) restricted to which \(\Mlp_f\) is one-to-one. Thus \(G\times_{\base}X\) has an open cover consisting of basic open sets restricted to which \(\Mlp_f\) is a homeomorphism onto its image. One may choose an open cover where each basic open set is cocompact. Note that since \(X\) is Hausdorff, the fibre product \(G\times_{\base}X\) is Hausdorff.

\begin{notation}[Notation and discussion]
  \label{not:loc-sec}
Let \(G\) be a locally compact groupoid 
and \(X\) a left \(G\)\nb-space. Let \(Y\) be a space and \(f\colon X\to Y\) be a \(G\)\nb-invariant map. Assume that \(\Mlp_f\colon G\times_{\base}X\to X\times_{f,Y,f} X\), \((\gamma, x)\mapsto (\gamma x,y)\), is a local homeomorphism onto its image , that is, the action of \(G\) on \(X\) is locally strongly locally free and \(\Mlp_0\) is open onto its image.

 Let \(U\subseteq G\) and \(V\subseteq X\) be open, Hausdorff sets with \(\Mlp|_{U\times_{\base}V}\) a homeomorphism onto its image. Then for each \((x,y)\in \Mlp(U\times_{\base}V)\), there is a unique \(\gamma\in U\) with \(\gamma y=x\). We define the function 
\[
\LSec|_{\Mlp(U\times_{\base}V)}\colon \Mlp(U\times_{\base}V)\to G
\]
which sends \((x,y)\in \Mlp(U\times_{\base}V)\) to the unique \(\gamma\in U\) such that \(x=\gamma y\). Note that if \(pr_1\colon G\times_{\base}X\to G\) is the projection on the first factor, then \(\LSec|_{\Mlp(U\times_{\base}V)}=pr_1\circ\Mlp|_{\Mlp(U\times_{\base}V)}\inverse\). This shows that \(\LSec_{\Mlp(U\times_{\base}V)}\) is continuous. 

We call \(\LSec|_{\Mlp(U\times_{\base}V)}\) \emph{ the local section} at \(U\times_{\base}V\). Using \(\LSec|_{\Mlp(U\times_{\base}V)}\) we may write \(\Mlp|_{\Mlp(U\times_{\base}V)}\inverse\colon \Mlp(U\times_{\base}V)\to U\times_{\base}V\) as
\begin{align}
\Mlp|_{\Mlp(U\times_{\base}V)}\inverse(x,y)&=(\LSec|_{\Mlp(U\times_{\base}V)}(x,y),y)\notag\\
&=(\LSec|_{\Mlp(U\times_{\base}V)}(x,y),\LSec|_{\Mlp(U\times_{\base}V)} (x,y)\inverse x).\notag
\end{align}
If one writes \(\gamma_{(x,y)}\) for \(\LSec|_{\Mlp(U\times_{\base}V)}(x,y)\), then the above formula looks better:
\begin{align}
\Mlp|_{\Mlp(U\times_{\base}V)}\inverse(x,y)&=(\gamma_{(x,y)},\,y) \label{eq:loc-sec-inv-y}\\
&=(\gamma_{(x,y)},\, \gamma_{(x,y)}\inverse x).\label{eq:loc-sec-inv-x}
\end{align}
Equation~\eqref{eq:loc-sec-inv-y} expresses the local section \(\LSec|_{\Mlp(U\times_{\base}V)}\) of \(U\times_{\base}V\) in terms of \(y\) whereas Equation~\eqref{eq:loc-sec-inv-x} expreses it in terms of \(x\).
\end{notation}

\subsection{Proper groupoids and cutoff functions}
\label{sec:prop-group-cutoff}
This subsection is based on Section~{1.2} of~\cite{Holkar2017Composition-of-Corr}.

Let \(G\) be a groupoid. We call \(G\) \emph{proper} if the map \(s_G\times r_G\colon G\to \base\times\base\), \(s_G\times r_G(\gamma)=(s_G(\gamma), r_G(\gamma))\) is proper. An action of \(G\) on a space \(X\) is proper if the transformation groupoid \(G\ltimes X\) is proper; this definition is equivalent to the one given after Definition~\ref{def:mult-fn} on page~\pageref{def:mult-fn}. It is well-known that if the action of \(G\) on \(X\) is proper, then \(G/X\) inherits many \emph{good} topological properties from \(X\), that is, if \(X\) is locally compact (Hausdorff or paracompact or  second-countable), then \(G/X\) is also locally compact (respectively, Hausdorff or paracompact or second-countable).

For a groupoid \(G\), the space of units \(\base\) carries an action of \(G\) for which \(s_G\) is the momentum map, and the action is that for \(u\in\base\) and \(\gamma\in G_u\) \(\gamma u=r_G(\gamma)\). It can be checked that \(G\) is proper if and only if this action of \(G\) on \(\base\) is proper. We call this action \emph{the} left action of \(G\) on its space of units. \emph{The} right action of \(G\) on its space of units is defined similarly.

Let \(G\) be a groupoid. For nonempty subsets \(V,W\subseteq\base\), we denote the set \(\{\gamma\in G: r_G(\gamma)\in V\text{ and } s_G(\gamma)\in W\}\) by \(G^V_W\). If \(W=V\), then \(G^V_V\) is a groupoid called the \emph{restriction groupoid} for the subset \(V\subseteq \base\). The space of units of the restriction groupoid \(\base[G_V^V]\) is \(V\). \(G^V_V\) is a topological subgroupoid of \(G\) when bestowed with the subspace topology of \(G\).
\begin{lemma}
  \label{lem:rest-prop-gpd}
Let \(G\) be a proper groupoid and \(V\) a subset of \(\base\).
\begin{enumerate}[i)]
\item If \(G\) is proper the so is the restriction groupoid \(G_V^V\).
\item The inclusion map \(V\hookrightarrow \base\) induces a topological embedding \(G/V\hookrightarrow G/\base\).
\end{enumerate}
\end{lemma}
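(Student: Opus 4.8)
We treat the two parts in turn.

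\emph{Part (i).} Observe that, as a subset of $G$, the restriction groupoid is exactly the preimage $G_V^V=(s_G\times r_G)\inverse(V\times V)$, and that its source and range maps are the restrictions of $s_G$ and $r_G$. The plan is to deduce properness of $G_V^V$ from that of $G$ by a preimage argument. Let $K\subseteq V\times V$ be quasi-compact. By the observation preceding Lemma~\ref{lem:restr-is-Cc}, $K$ is quasi-compact as a subset of $\base\times\base$ as well. Since $G$ is proper, $(s_G\times r_G)\inverse(K)$ is quasi-compact in $G$; and because $K\subseteq V\times V$, this preimage is already contained in $G_V^V$ and coincides with the preimage of $K$ under $s_{G_V^V}\times r_{G_V^V}$. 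Invoking the same observation once more, it is quasi-compact in the subspace $G_V^V$. Hence $s_{G_V^V}\times r_{G_V^V}$ is proper and $G_V^V$ is a proper groupoid.

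\emph{Part (ii), setup.} Write $q\colon\base\to G/\base$ and $q_V\colon V\to G/V$ for the two orbit maps (the latter for the $G_V^V$\nb-action on $V$ supplied by part (i)). The first thing I would record is that the orbit relation on $V$ induced by $G_V^V$ is nothing but the trace on $V\times V$ of the orbit relation on $\base$ induced by $G$: if $v,v'\in V$ and $\gamma\in G$ satisfies $\gamma v=v'$, then $s_G(\gamma)=v\in V$ and $r_G(\gamma)=v'\in V$, so automatically $\gamma\in G_V^V$. Consequently the inclusion $V\hookrightarrow\base$ carries $G_V^V$\nb-orbits into $G$\nb-orbits and descends, by the universal property of $q_V$, to a continuous map $j\colon G/V\to G/\base$ with $j\circ q_V=q|_V$; and the orbit-relation identity just noted makes $j$ injective, with image $q(V)$.

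\emph{Part (ii), the embedding and the main obstacle.} It remains to show that $j$ is open onto its image $q(V)$. Here is where the argument really uses the hypotheses. I would use that the orbit map $q$ is open --- equivalently that $r_G$ is open, which holds in the setting at hand (in particular whenever $G$ carries a Haar system), since the $G$\nb-saturation $r_G(s_G\inverse(O))$ of an open set $O\subseteq\base$ is then open. Given an open $W\subseteq G/V$, the set $A\defeq q_V\inverse(W)$ is open in $V$ and $G_V^V$\nb-saturated, and $j(W)=q(A)$. To push $A$ forward by the open map $q$ one needs $A$ to be open in $\base$, which is exactly what the openness of $V$ in $\base$ provides; then $q(A)$ is open in $G/\base$, hence open in the subspace $q(V)$. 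Thus $j$ is a continuous open bijection onto $q(V)$, that is, a topological embedding.

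The step I expect to be the genuine obstacle is precisely this last matching of the quotient topology on $G/V$ with the subspace topology on $q(V)$: it hinges both on openness of the orbit map and on $V$ being open in $\base$. Indeed, the translation action of $\Z$ on $\R$ with $V=[0,1)$ produces a proper groupoid for which the induced continuous bijection $G/V\to G/\base\homeo\UCr$ fails to be a homeomorphism, so local closedness of $V$ alone does not suffice. For this reason I would make the open-orbit-map fact and the role of the openness of $V$ fully explicit, rather than dispatch the interaction of the two quotient topologies as routine bookkeeping.
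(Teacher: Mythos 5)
Your part (i) is essentially the paper's own argument: both proofs rest on the identity \(G_V^V=(r_G\times s_G)\inverse(V\times V)\). The paper then simply cites Bourbaki (Chapter I, \S 10, No.\,1, Proposition 3(a)), whereas you prove the preimage statement directly from the definition of a proper map, using the fact that quasi-compactness of a subset does not depend on the ambient space (the observation the paper records before Lemma~\ref{lem:restr-is-Cc}). The two are interchangeable.

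Part (ii) is where you genuinely diverge from the paper, and your caution is warranted. The paper's entire justification of openness onto the image is the clause that the induced map ``is open onto its image because \(V\hookrightarrow\base\) is so.'' Since \emph{every} subspace inclusion is trivially open onto its image in the paper's sense, that clause carries no force, and your example shows the conclusion can actually fail: for \(G=\Z\ltimes\R\) (a proper, indeed {\etale}, groupoid) and \(V=[0,1)\), the restriction groupoid \(G_V^V\) is the trivial groupoid on \([0,1)\), so \(G/V\homeo[0,1)\), while \(G/\base\homeo\UCr\); the induced map is a continuous bijection that is not open onto its image. Hence the lemma is false as stated for an arbitrary subset \(V\subseteq\base\), and the paper's proof of (ii) has a genuine gap exactly at the step you refused to treat as bookkeeping. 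The hypotheses you add --- \(V\) open in \(\base\) and the orbit map \(q\colon\base\to G/\base\) open (guaranteed when \(r_G\) is open, e.g.\ in the presence of a Haar system) --- are precisely the standing assumptions under which the paper actually applies the lemma in the remark following it, and under those hypotheses your argument (\(j(W)=q(q_V\inverse(W))\), with \(q_V\inverse(W)\) open in \(\base\) and \(q\) open) is complete and correct. In short: your (i) matches the paper; your (ii) is not a reproduction of the paper's proof but a necessary correction of it, and it would be worth stating the corrected hypotheses in the lemma itself rather than only in the remark that follows.
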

\begin{proof}
(i) Firstly, note that \(G^V_V=(r_G\times s_G)\inverse(V\times V)\). Now the claim of the lemma follows from~\cite{Bourbaki1966Topologoy-Part-1}*{Chapter I, \S 10 No.\,1, Proposition 3(a)}.

\noindent
(ii) First of all observe that for every \(v\in V\) the equivalence class of \(v\) in \(G/V\) and \(G/\base\) are the same sets. Thus the inclusion \(V\hookrightarrow \base\) does induce a well-defined one-to-one map \(G/V\hookrightarrow G/\base\). This map is continuous because of the universal property of the quotient space, and is open onto its image because \(V\hookrightarrow\base\) is so.
\end{proof}

Let \(G\) and \(V\) be as in Lemma~\ref{lem:rest-prop-gpd}. Assume that \(r_G\) is an open map. If \(V\) is an open subset of \(\base\), then {(ii)}~of Lemma~\ref{lem:rest-prop-gpd} says that \(G/V\) is homeomorphic to an open subset of \(G/\base\) since the quotient map \(\base\to G/\base\) is open in this case. Thus if \(G/\base\) is paracompact, then \(G/V\) is paracompact.
\begin{lemma}
  \label{lem:cutoff-function}
Let \(G\) be a locally compact proper groupoid with the range map open. Assume that \(G/\base\) is paracompact for the action of \(G\) on its space of units \(\base\). Then there is a continuous real-valued function \(F\) on \(\base\) which has the following properties
\begin{enumerate}[i)]
  \item $F$ is not identically zero on any equivalence class for the action of \(G\) on \(\base\);
  \item for every compact subset $K$ of $G/\base$, the intersection of $r_G\inverse(K)$ with $\supp(F)$ is compact.
  \end{enumerate}
\end{lemma}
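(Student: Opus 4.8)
The plan is to pull back a partition of unity from the orbit space $G/\base$ to $\base$ and to control supports by local finiteness. Write $q\colon \base\to G/\base$ for the quotient map of the (left) action of $G$ on its units. As recalled after Lemma~\ref{lem:rest-prop-gpd}, the hypothesis that $r_G$ is open makes $q$ open, and the properness of the action together with the local compactness and Hausdorffness of $\base$ make $G/\base$ locally compact and Hausdorff; by hypothesis it is also paracompact. First I would therefore fix a locally finite cover $\{W_i\}_{i\in I}$ of $G/\base$ by relatively compact open sets and a subordinate partition of unity $\{\chi_i\}_{i\in I}$ with $\chi_i\in\Contc(G/\base)$, $0\le \chi_i\le 1$, $\supp(\chi_i)\subseteq W_i$ and $\sum_{i\in I}\chi_i\equiv 1$.

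Next I would lift each $\chi_i$. Since $q$ is an open surjection and $\base$ is locally compact, the compact set $\supp(\chi_i)$ can be covered by finitely many sets $q(O_1),\dots,q(O_m)$ with each $O_j\subseteq\base$ relatively compact and open; putting $V_i\defeq O_1\cup\dots\cup O_m$ gives a relatively compact open set with $q(V_i)\supseteq\supp(\chi_i)$. Choosing, by Urysohn's lemma, $h_i\in\Contc(\base)$ with $0\le h_i\le 1$ and $h_i\equiv 1$ on $\overline{V_i}$, I would set
\[
F\defeq \sum_{i\in I}(\chi_i\circ q)\,h_i .
\]
Because $\supp\bigl((\chi_i\circ q)h_i\bigr)\subseteq q\inverse(\supp\chi_i)\subseteq q\inverse(W_i)$ and the family $\{q\inverse(W_i)\}_{i\in I}$ is locally finite in $\base$ (the family $\{W_i\}$ being locally finite and $q$ continuous), the sum is locally finite; hence $F$ is a well-defined, continuous, non-negative function on $\base$.

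To verify property (i), given an orbit $[u]$ I would use $\sum_i\chi_i\equiv 1$ to find $i$ with $\chi_i([u])>0$, so that $[u]\in\supp(\chi_i)\subseteq q(V_i)$; choosing $v\in V_i$ with $q(v)=[u]$ puts $v$ in the orbit of $u$ with $h_i(v)=1$, whence $F(v)\ge \chi_i(q(v))h_i(v)=\chi_i([u])>0$. For property (ii), fix a compact $K\subseteq G/\base$. Local finiteness yields a finite set $J\subseteq I$ with $W_i\cap K=\emptyset$ for $i\notin J$. If $x\in q\inverse(K)\cap\supp(F)$ then $x\in\supp\bigl((\chi_i\circ q)h_i\bigr)$ for some $i$ with $q(x)\in\supp(\chi_i)\cap K$, which forces $i\in J$ and $x\in\supp(h_i)$; thus $q\inverse(K)\cap\supp(F)\subseteq\bigcup_{i\in J}\supp(h_i)$, a finite union of compacta, and being closed it is itself compact. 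Under the inclusion $\supp(F)\subseteq\base\subseteq G$ this set is exactly $r_G\inverse(K)\cap\supp(F)$, giving (ii).

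The main obstacle is the simultaneous bookkeeping of continuity and the support estimate: the crux is lifting the compact sets $\supp(\chi_i)$ through the open surjection $q$ and transporting local finiteness along $q\inverse$, so that only finitely many summands can meet $q\inverse(K)$. Everything else is routine once these two points are in place.
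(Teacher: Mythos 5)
Your proof is correct and is essentially the argument the paper relies on: the paper's own ``proof'' is just a citation of \cite{Holkar2017Composition-of-Corr}*{Lemma 1.7}, whose proof (following Tu's cutoff construction) is exactly your construction --- a locally finite partition of unity \(\{\chi_i\}\) on the locally compact, Hausdorff, paracompact orbit space, lifted through the open quotient map \(q\colon\base\to G/\base\) and multiplied by compactly supported bumps \(h_i\) equal to \(1\) on relatively compact sets \(V_i\subseteq\base\) with \(q(V_i)\supseteq\supp(\chi_i)\). Your reading of condition (ii), namely that \(r_G\inverse(K)\cap\supp(F)\) is the set \(q\inverse(K)\cap\supp(F)\) once \(\supp(F)\) is viewed inside \(\base\subseteq G\), is also the intended one, as the paper's subsequent use of \(F\) to build the cutoff function \(c_G\) confirms.
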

\begin{proof}
  This proof follows from that of~\cite{Holkar2017Composition-of-Corr}*{Lemma 1.7}.
\end{proof}
Let \((G,\alpha)\) be a locally compact proper groupoid with a Haar system. Then \(r_G\) is open. Let \(F\) be a function on \(\base\) as in Lemma~\ref{lem:cutoff-function}. Now the proof of \cite{Holkar2017Composition-of-Corr}*{Lemma 1.7} shows that the function \(\bar F\) on \(\base\) defined as
\[
\bar F(u)=\int_G F\circ s_G(\gamma) \;\dd\alpha^u(\gamma)
\]
is continuous and \(0<\bar F(u)<\infty\) for all \(u\in\base\). The discussion in the same proof also shows that the function
\[
c_G\defeq \frac{F}{\bar F}\circ s_G
\]
on \(G\) is continuous has the property that 
\[
\int_{G^u} c_G(\gamma) \;\dd\alpha^u(\gamma) =1
\]
for all \(u\in\base\). We call \(c_G\) a cutoff function for \((G,\alpha)\). 
\begin{example}
  \label{exa:cutoff-for-H-space}
  Let \((H,\beta)\) be a locally compact groupoid with a Haar system and \(X\) a proper rigt \(H\)\nb-space with \(X/H\) paracompact. Then \(X\rtimes H\) is a locally compact proper groupoid. Recall that for \((x,\eta)\in X\rtimes H\), \[
(x,\eta)\inverse=(x\eta,\eta\inverse),\quad r_{X\rtimes H}(x,\eta)= x,\quad  \text{and}\quad s_{X\rtimes H}(x,\eta)=x\eta.
\] It is well known that the Haar system \(\beta\) induces a Haar system \(\tilde\beta\) for \(X\rtimes H\): for \(x\in X=\base[(X\rtimes H)]\) and \(f\in \Contc(X\rtimes H)\)
\[
\int_{X\rtimes H} f\,\dd\tilde\beta^u\defeq \int_{H} f(x,\eta)\,\dd\beta^{s_X(x)}(\eta).
\]
Let \(c\) be a function on \(X\) as in Lemma~\ref{lem:cutoff-function}. Then
\[
\bar c(u)=\int_{X\rtimes H} c\circ s_{X\rtimes H}(x,\eta) \;\dd\tilde\beta^u(\eta)= \int_{H} c(x\eta) \;\dd\beta^u(\eta),
\]
where \(u\in X\), is the function similar to \(\bar F\) above and 
\[
c_{X\rtimes H}= \frac{c}{\bar c}\circ s_{X\rtimes H}
\]
is a cutoff function like \(c_G\) above. Note that \(\bar c\) is \(H\) and \(X\rtimes H\)\nb-invariant. In this example, we call the function \(c\) a \emph{pre-cutoff} function and \(c/\bar c\) \emph{the normalised pre-cutoff function} corresponding to \(c\). Thus, the cutoff function \(c_{X\rtimes H}\) is the composite of the \emph{the normalised pre-cutoff function} corresponding to \(c\) and the source map of the groupoid \(X\rtimes H\).  We shall use this example and notation in (4) of the proof of the main theorem, Theorem~\ref{thm:prop-corr-main-thm}.
\end{example}
\subsection{Compact operators}
\label{comp-op}

Let \((H,\beta)\) be a locally compact groupoid with a Haar system. Let \((X,\lambda)\) be a pair consisting of a proper right \(H\)\nb-space \(X\) and an \(H\)\nb-invariant family of measures on \(\lambda\) along the momentum map \(s_X\). For $\zeta,\xi \in C_c(X)$ and $g \in C_c(H)$ define
\begin{equation}\label{def:r-act-inerpro}
 \left\{\begin{aligned}
(\zeta\cdot g )(x) &\defeq \int_{H^{s_X(x)}} \zeta(x\eta)\, g({\eta}\inverse) \; \dd\beta^{s_X(x)}(\eta) \text{ and }\\
\langle \zeta, \xi \rangle (\eta) &\defeq \int_{X_{r_H(\eta)}}\, \overline{\zeta(x)} \xi(x\eta) \; \dd\lambda_{r_H(\eta)}(x).
  \end{aligned}\right.
\end{equation}
Using the above equations one may make \(\Contc(X)\) into an inner product \(\Contc(H)\)\nb-module which completes to a Hilbert \(\Cst(H,\beta)\)\nb-module; see~\cite{Renault1985Representations-of-crossed-product-of-gpd-Cst-Alg}*{Corollaire 5.2} for details. We denote this Hilbert \(\Cst(H,\beta)\)\nb-module by \(\Hilm(X)\).

Let \((X,\lambda)\) and \((H,\beta)\) be as above. Then \(X\times_{s_X,\base[H],s_X}X\) carries the diagonal action of \(H\) which is proper; for \((x,y)\in X\times_{s_X,\base[H],s_X}X\) and \(\eta\in H^{s_X(x)}\) the action is \((x,y)\eta=(x\eta,y\eta)\). Let \(G\) denote the quotient space \((X\times_{s_X,\base[H],s_X}X)/H\). For \(f,g\in \Contc(G)\) define
\begin{align}
 f*g([x,z])&=\int_X f([x,y])g([y,z])\,\dd\lambda_{s_X(z)}(y)\label{eq:hgpd-con}\\
  f^*([x,y])&=\overline {f[y,x]}\label{eq:hgpd-inv}.
\end{align}
Then \(f^*\) is in \(\Contc(G)\) and~\cite{Holkar-Renault2013Hypergpd-Cst-Alg} shows that \(f*g\) is well-defined and lies in \(\Contc(G)\). When equipped with the convolution in Equation~\eqref{eq:hgpd-con} and the involution~\eqref{eq:hgpd-inv}, \(\Contc(G)\) becomes a {\Star}algebra.~\cite{Holkar-Renault2013Hypergpd-Cst-Alg}*{Theorem 2.2} shows the representations of the {\Star}algebra \(\Contc(H)\) induces  representations of \(\Contc(G)\). Using these induced representations we complete \(\Contc(G)\)  to a \(\Cst\)\nb-algebra which we denote by \(\Cst(G)\); see~\cite{Holkar-Renault2013Hypergpd-Cst-Alg} for details.

In~\cite{Renault2014Induced-rep-and-Hpgpd}, Renault shows that the quotient space \(G\) and the above \(\Cst\)\nb-algebra corresponding to it carry a well-defined mathematical structure. He shows that \(G\) is the \emph{spatial hypergroupoid} associated with the proper \(H\)\nb-space \(X\). The \(H\)\nb-invariant family of measures \(\lambda\) induces a Haar system \(\lambda_G\) on \(G\) and the algebra \(\Cst(G)\) is the \(\Cst\)\nb-algebra \(\Cst(G,\lambda_G)\) of the hypergroupoid \(G\) equipped with the Haar system \(\lambda_G\).

\begin{remark}
  \label{rem:cst-cat}[A long remark that explains the origins of Equations~\eqref{eq:hgpd-con},\eqref{eq:hgpd-inv} and~\eqref{eq:hgpd-l-act-inpro}.]

We advise reader to have look at Proposition~\ref{prop:implications-of-Cst-cat-2} and its proof until Equation~\eqref{eq:hgpd-l-act-inpro} below before reading this remark further. In this remark, we discuss a {\Star}category studied in~\cite{Holkar-Renault2013Hypergpd-Cst-Alg} and its relation to Equations~\eqref{eq:hgpd-con},\eqref{eq:hgpd-inv} and~\eqref{eq:hgpd-l-act-inpro}. For a locally compact groupoid \(H\) equipped with a Haar system \(\beta\), one may form a {\Star}category \(\mathfrak{C}_c(H,\beta)\) as follows: the objects of \(\mathfrak{C}_c(H,\beta)\) are the vector spaces \(\Contc(X,\lambda)\) where \((X,\lambda)\) is a proper \(H\)\nb-space with an \(H\)\nb-invariant family of measures.
The arrows from \(\Contc(X,\lambda)\) to \(\Contc(Y,\mu)\), where \(\Contc(X,\lambda), \Contc(Y,\mu)\in\!\in \mathfrak{C}_c(H,\beta)^0\), are functions in \(\Contc((X\times_{s_X,\base[H],s_Y}Y)/H)\). The involution of \(f\in\Contc((X\times_{s_X,\base[H],s_Y}Y)/H)\) is \(f^*([x,y])=\overline{f([y,x])}\).

Let \(\Contc(Z,\nu)\) be one more object in \(\mathfrak{C}_c(H,\beta)\). If \(f\colon \Contc(X)\to \Contc(Y)\) and \(g\colon \Contc(Y)\to \Contc(Z)\) are arrows in \(\mathfrak{C}_c(H,\beta)\), then their composite arrow, which goes from \((X,\lambda)\) to \((Z,\nu)\), is denoted by \(f*g\) and defined as
\[f*g[x,z]=\int_Y f[x,y]g[y,z]\,\dd\mu_{s_z(z)}(\psi).\]
When equipped with the involution in Equation~\eqref{eq:hgpd-inv}, \(\mathfrak{C}_c(H)\) becomes a {\Star}category. See~\cite{Holkar-Renault2013Hypergpd-Cst-Alg} for details.

\cite{Holkar-Renault2013Hypergpd-Cst-Alg}*{Theorem 2.2} shows that that the representations of \((H,\beta)\) induce the ones of \(\mathfrak{C}_c(H)\) which makes \(\mathfrak{C}_c(H)\) a normed {\Star}category. Using these induced representations, we complete \(\mathfrak{C}_c(H)\) to a \(\Cst\)\nb-category \(\mathfrak{C}(H,\beta)\).

Let \((X,\lambda)\in\!\in\mathfrak{C}_c(H,\beta)\). Note that \(\Contc(H,\beta\inverse)\) is also an object in \(\mathfrak{C}_c(H,\beta)\). Now it can be checked that Equation~\eqref{eq:hgpd-con} gives the composition of two arrows from \(\Contc(X,\lambda)\) to itself and Equation~\eqref{eq:hgpd-inv} is an involution of an arrow from \(\Contc(X,\lambda)\) to itself. Furthermore,~\cite{mythesis}*{Table 3.2} shows that Equation~\eqref{eq:hgpd-l-act-inpro} below is equivalent to the composition of certain arrows in \(\mathfrak{C}_c(H)\). This explains the origins and well-definedness of Equations~\eqref{eq:hgpd-con},\eqref{eq:hgpd-inv} and~\eqref{eq:hgpd-l-act-inpro}.
\end{remark}
\begin{proposition}
 \label{prop:implications-of-Cst-cat-2} 
Let \((X,\lambda)\) be a proper \(H\)\nb-space equipped with an \(H\)\nb-invariant family of measures. Let \(\Cst(G)\) be the \(\Cst\)\nb-algebra obtained by completing the {\Star}algebra \(C_c(G)\) as in~\cite{Holkar-Renault2013Hypergpd-Cst-Alg}. Then $\Cst(G)\iso\Comp(\Hilm(X))$.
\end{proposition}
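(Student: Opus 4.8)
\section*{Proof proposal}

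The plan is to produce a concrete \(*\)\nb-isomorphism by identifying the rank-one operators on \(\Hilm(X)\) with distinguished functions in \(\Contc(G)\) and then extending. Throughout write \(G=(X\times_{s_X,\base[H],s_X}X)/H\), and let \(\Psi\colon\Contc(G)\to\Bound(\Hilm(X))\) be the representation by left multiplication coming from Equation~\eqref{eq:hgpd-l-act-inpro}; that this is a \(*\)\nb-representation into the bounded adjointable operators, and hence extends to \(\Cst(G)\), is part of the \(\Cst\)\nb-category structure recalled in Remark~\ref{rem:cst-cat} and established in \cite{Holkar-Renault2013Hypergpd-Cst-Alg}. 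The goal is to show that this \(\Psi\) is an isometric isomorphism onto \(\Comp(\Hilm(X))\).

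First I would set up the correspondence on generators. For \(\xi,\zeta\in\Contc(X)\) define \(\Phi(\xi,\zeta)\in\Contc(G)\) to be the \(H\)\nb-average of \((x,y)\mapsto\xi(x)\overline{\zeta(y)}\), namely \(\Phi(\xi,\zeta)([x,y])=\int_{H^{s_X(x)}}\xi(x\eta)\overline{\zeta(y\eta)}\,\dd\beta^{s_X(x)}(\eta)\); this descends to the quotient because the diagonal \(H\)\nb-action on \(X\times_{s_X,\base[H],s_X}X\) is proper, and it is compactly supported. A direct computation with Fubini's theorem and the \(H\)\nb-invariance of \(\lambda\), using the inner product and right action from Equation~\eqref{def:r-act-inerpro} and the left action from Equation~\eqref{eq:hgpd-l-act-inpro}, then shows \(\Psi(\Phi(\xi,\zeta))=\theta_{\xi,\zeta}\), where \(\theta_{\xi,\zeta}(\chi)=\xi\cdot\inpro{\zeta}{\chi}\) is the rank-one operator. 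Comparing \(\Phi(\zeta,\xi)\) with Equation~\eqref{eq:hgpd-inv} and \(\Phi(\xi,\zeta)*\Phi(\xi',\zeta')\) with Equation~\eqref{eq:hgpd-con} then confirms the relations \(\theta_{\xi,\zeta}^*=\theta_{\zeta,\xi}\) and \(\theta_{\xi,\zeta}\theta_{\xi',\zeta'}=\theta_{\xi\cdot\inpro{\zeta}{\xi'},\zeta'}\), so that \(\Phi\) is compatible with the \(*\)\nb-algebra structure of \(\Contc(G)\) and \(\Psi\) is a \(*\)\nb-homomorphism on these elements.

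Next I would prove that the linear span \(S\) of \(\{\Phi(\xi,\zeta):\xi,\zeta\in\Contc(X)\}\) is dense in \(\Contc(G)\) for the inductive limit topology. Since the diagonal \(H\)\nb-action is proper, the \(H\)\nb-averaging map \(P\colon\Contc(X\times_{s_X,\base[H],s_X}X)\to\Contc(G)\) is a continuous surjection for the inductive limit topologies, and \(\Phi\) is exactly \(P\) applied to \((x,y)\mapsto\xi(x)\overline{\zeta(y)}\). A Stone--Weierstrass argument on the locally compact Hausdorff fibre product, with the usual support control, shows that finite sums of such functions are dense in the domain; continuity of \(P\) then gives \(\Contc(G)=P(\overline{\mathrm{span}})\subseteq\overline{P(\mathrm{span})}=\overline{S}\), so \(S\) is dense. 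Because \(\Psi\) is continuous from the inductive limit topology to the operator norm (it is dominated by the associated \(I\)\nb-norm), it follows that \(\Psi(\Contc(G))\subseteq\Comp(\Hilm(X))\) while containing every finite-rank operator \(\sum_i\theta_{\xi_i,\zeta_i}\); hence \(\Psi\colon\Cst(G)\to\Comp(\Hilm(X))\) is a \(*\)\nb-homomorphism with dense range, and therefore surjective.

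It remains to show \(\Psi\) is isometric, which I expect to be the main obstacle, since this is what pins the completion down to \(\Cst(G)\) rather than some smaller quotient. The \(\Cst\)\nb-norm on \(\Cst(G)\) is by definition the supremum over the representations induced from representations of \((H,\beta)\) (Remark~\ref{rem:cst-cat} and \cite{Holkar-Renault2013Hypergpd-Cst-Alg}*{Theorem 2.2}). Via the Hilbert module \(\Hilm(X)\), a nondegenerate representation \(\pi\) of \(\Cst(H,\beta)\) induces the representation of \(\Comp(\Hilm(X))\) on \(\Hilm(X)\otimes_\pi\mathcal H_\pi\), and precomposing with \(\Psi\) recovers exactly the hypergroupoid-induced representation of \(\Contc(G)\) attached to \(\pi\). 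Since the intrinsic \(\Cst\)\nb-norm of \(\Comp(\Hilm(X))\) is already attained on a single faithful such induced representation, one obtains \(\|\Psi(f)\|=\|f\|_{\Cst(G)}\) for all \(f\in\Contc(G)\). Thus \(\Psi\) extends to an isometric, surjective \(*\)\nb-homomorphism, giving \(\Cst(G)\iso\Comp(\Hilm(X))\).
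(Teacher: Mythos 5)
Your proposal is correct, and its computational core coincides with the paper's proof, but you reach the conclusion by a genuinely different route. Your identity \(\Psi(\Phi(\xi,\zeta))=\theta_{\xi,\zeta}\) is precisely the paper's compatibility relation \({_G\inpro{\zeta}{\xi}}\cdot\theta=\zeta\cdot\inpro{\xi}{\theta}\), proved there by the same Fubini-plus-invariance computation, and your Stone--Weierstrass density of the span of the \(\Phi(\xi,\zeta)\) is exactly the paper's fullness of the \(\Contc(G)\)\nb-valued inner product. The difference is the concluding mechanism: the paper additionally proves fullness of the \(\Cst(H,\beta)\)\nb-valued inner product (via~\cite{Renault1985Representations-of-crossed-product-of-gpd-Cst-Alg}*{Lemme 1.1}, using properness of \(\lambda\)), concludes that \(\Hilm(X)\) is an imprimitivity bimodule, and reads off \(\Cst(G)\iso\Comp(\Hilm(X))\) from Morita theory, delegating the passage to completions --- and with it the identification of the \(\Cst(G)\)\nb-norm with the operator norm on \(\Hilm(X)\) --- to the \(\Cst\)\nb-category machinery of~\cite{Holkar-Renault2013Hypergpd-Cst-Alg}. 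You bypass right-fullness entirely and instead prove directly that \(\Psi\) is an isometric isomorphism onto the compact operators, making explicit the norm bookkeeping that the paper compresses into ``one may see'': every representation defining the norm of \(\Cst(G)\) factors through \(\Psi\) by Rieffel induction, and a faithful representation of \(\Cst(H,\beta)\) induces an isometric representation of \(\Comp(\Hilm(X))\). What each approach buys: the paper's yields the stronger by-product that \(\Cst(G)\) and \(\Cst(H,\beta)\) are Morita equivalent, while yours uses fewer ingredients and settles the completion question head-on. One caveat: the linchpin of your isometry step --- that the hypergroupoid-induced representation attached to \(\pi\) is \(\Psi(\cdot)\otimes_\pi 1\) on \(\Hilm(X)\otimes_\pi\mathcal{H}_\pi\) --- is an appeal to how induction is defined in~\cite{Holkar-Renault2013Hypergpd-Cst-Alg} rather than something you verify; it is true, but it carries the entire weight of pinning the completion down to \(\Cst(G)\) rather than a proper quotient, so it deserves to be stated as an explicit identification.
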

   \begin{proof}
We show that \(\Cst(G)\) and \(\Cst(H,\beta)\) are Morita equivalent and \(\Hilm(X)\) is the imprimitivity bimodule; this implies that \(\Cst(G)\iso\Comp(\Hilm(X))\). The isomorphism \(\Cst(G)\iso\Comp(\Hilm(X))\) is given by the representation of \(\Cst(G)\) on \(\Hilm(X)\).

 For \(f\in \Contc(G)\), \(\xi, \zeta \in \Contc(X)\) and \(g\in \Contc(H)\) let \(\zeta a\) and \(\inpro{\zeta}{\xi}\) be as in Equation~\ref{def:r-act-inerpro}. Furthermore, define
\begin{align}
 \left\{\begin{aligned}\label{eq:hgpd-l-act-inpro}
  f\zeta(x)=\int_X f[x,y]\,\zeta(y)\,\dd\lambda_{s_X(x)}(y) \;\text{ and}\\
_G\inpro{\zeta}{\xi}=\int_H \zeta(x\eta)\,\overline{\xi(y\eta)}\,\dd\beta_{s_X(x)}(\eta).
  \end{aligned}\right.
\end{align}
The integrals in Equation~\ref{eq:hgpd-l-act-inpro} are well-defined, in addition to which, \(f\zeta \in \Contc(X)\) and \({_G\inpro{\zeta}{\xi}}\in \Contc(G)\), see Remark~\ref{rem:cst-cat} above. Equation~\eqref{eq:hgpd-l-act-inpro} defines a representation of the {\Star}algebra \(\Contc(G)\) on \(\Contc(X)\) and a \(\Contc(G)\)\nb-valued inner produce on \(\Contc(X)\), respectively. Completing the {\Star}category \(\mathfrak{C}_c(H,\beta)\) to the \(\Cst\)\nb-category, \(\mathfrak{C}(H,\beta)\), we get a representation of \(\Cst(G)\) on \(\Hilm(X)\) and the \(\Contc(G)\)\nb-valued inner product makes \(\Hilm(X)\) into a left Hilbert \(\Cst(G)\)\nb-module.

It is a standard computation to check that the actions of \(\Contc(G)\) and \(\Contc(H)\) on \(\Contc(X)\) commute in the usual sense, that is \((f\xi)g=f(\xi g)\) for all \(f\in\Contc(G), \xi\in\Contc(X)\) and \(g\in\Contc(H)\). Thus \(\Hilm(X)\) is a \(\Cst(G)\)\nb-\(\ Cst(H,\beta)\)-bimodule which carries \(\Cst(G)\) and \(\Cst(H,\beta)\)\nb-valued inner products \(\inpro{}{}\) and \({_G\inpro{}{}}\), respectively.

Since \(\lambda\) is a proper family of measures along \(s_X\), the set \(\{\inpro{\zeta}{\xi}|\zeta,\xi\in \Contc(X)\}\) is dense in \(\Contc(H,\beta)\), see~\cite{Renault1985Representations-of-crossed-product-of-gpd-Cst-Alg}*{Lemme 1.1}.

 The set \(\{ _G\inpro{\zeta}{\xi}:\zeta,\xi\in\Contc(X)\}\), that is, the image of \(\Contc(X)\tens_{\C}\Contc(X)\) in \(\Contc(G)\) under the map \((\zeta,\xi)\mapsto\, _G\inpro{\zeta}{\xi}\) is dense: this follows from the a standard argument that uses the family of measures in Equation~\eqref{eq:measures-along-the-quotient}. The the argument is as follows: The restriction map mapping \(\zeta\otimes\xi\in \Contc(X)\otimes_{\C}\Contc(X)\) to \(\zeta\otimes_{s_X,\base[H],s_X}\xi\in \Contc(X\times_{s_X,\base[H],s_X}X)\) has a dense image due to the Stone-Weirstra{\ss} theorem. The Haar system \(\beta\) induces the family of measures \(\beta_{X\times_{s_X,\base[H],s_X}X}\) along the quotient map \(X\times_{s_X,\base[H],s_X}X\to G\) similar to the one in Equation~\eqref{eq:measures-along-the-quotient}. Hence the image of the integration map \(B_{X\times_{s_X,\base[H],s_X}X}\colon \Contc(X\times_{s_X,\base[H],s_X}X)\to \Contc(G)\), induced by the family of measures \(\beta_{X\times_{s_X,\base[H],s_X}X}\), is dense.

Let \(\iota\) be the bijection from \(\Contc(X)\otimes_{\C}\Contc(X)\) itself given by
\[
\iota \colon \zeta\otimes_{\C}\xi \mapsto \zeta\otimes_{\C}\overline{\xi}.
\]
 Then we observe that \(\{ _G\inpro{\zeta}{\xi}:\zeta,\xi\in\Contc(X)\}\subseteq \Contc(G)\) is the image of the composite \(B_{X\times_{s_X,\base[H],s_X}X}\circ \iota\). Hence \(\{ _G\inpro{\zeta}{\xi}:\zeta,\xi\in\Contc(X)\}\subseteq \Contc(G)\) is dense.

 To prove that \(\Hilm(X)\) implements Morita equivalence between the two \(\Cst\)\nb-algebras, we need to check that the equality \( {_G\inpro{\zeta}{\xi}\cdot\theta}= \zeta\cdot\inpro{\xi}{\theta}\) holds for every \(\zeta,\xi\) and \(\theta\) in \(\Hilm(X)\). To check this let \(\zeta,\xi,\theta\in\Contc(X)\), then for any \(x\in X\)
\begin{align}
  {_G\inpro{\zeta}{\xi}\cdot\theta}\,(x)
&=\int_X {_G\inpro{\zeta}{\xi}}[x,y]\,\theta(y)\;\dd\lambda_{s_X(x)}(y)\notag\\
&=\int_X\int_G \zeta(x\eta)\,\overline{\xi(y\eta)}\,\theta(y)\;\dd\beta^{s_X(x)}(\eta)\,\dd\lambda_{s_X(x)}(y),\label{eq:implications-of-Cst-cat-2-1} && \text{ and}
\end{align}
\begin{align*}
  \zeta\cdot\inpro{\xi}{\theta}\,(x)
&=\int_G \zeta(x\eta)\,\inpro{\xi}{\theta}(\eta\inverse)\;\dd\beta^{s_X(x)}(\eta)\\
&=\int_G\int_X \zeta(x\eta)\,\overline{\xi(y)}\,\theta(y\eta\inverse)\;\dd\lambda_{s_X(x)}(y)\,\dd\beta^{s_X(x)}(\eta).
\end{align*}
Change the variable \(y\mapsto y\eta\) in the last term above and use the right invariance of the family of measures \(\lambda\) to see that the equation transforms to
\begin{equation}
  \label{eq:implications-of-Cst-cat-2-2}
  \zeta\cdot\inpro{\xi}{\theta}\,(x)=  \int_G\int_X \zeta(x\eta)\,\overline{\xi(y\eta)}\,\theta(y)\;\dd\lambda_{s_X(x)}(y)\,\dd\beta^{s_X(x)}(\eta).
\end{equation}
Now use Fubini's theorem in the above equation to interchange the integrals to see that Equations~\eqref{eq:implications-of-Cst-cat-2-1} and~\eqref{eq:implications-of-Cst-cat-2-2} are same. Thus 
\[  {_G\inpro{\zeta}{\xi}\cdot\theta}= \zeta\cdot\inpro{\xi}{\theta}\]
for all \(\zeta,\xi\) and \(\theta\in\Contc(X)\). One may see that the \(\Cst\)\nb-category in~\cite{Holkar2017Construction-of-Corr} allows to extend above equality to \(\Cst(G), \Hilm(X)\) and \(\Cst(H,\beta)\) which proves that \(\Hilm(X)\) is the imprimitivity bimodule between \(\Cst(G)\) and \(\Cst(H,\beta)\).
   \end{proof}
   \begin{notation}
     \label{not:Hgpd-of-comp-operators}
Let \(X,H,\beta\) and \(\lambda\) as in Proposition~\ref{prop:implications-of-Cst-cat-2}. Then we denote the hypergroupoid \(G=(X\times_{s_X,\base[H],s_X}X)/H\) in the proof of the proposition by \(\TComp(X)\); the suffix `T' stands for \emph{topological}. Hence we change \(\sigma_{G}\) and \( _{G}\inpro{}{}\) change to \(\sigma_{\TComp(X)}\) and \( _{\TComp(X)}\inpro{}{}\), respectively, in the further writing.
   \end{notation}

\section{Proper topological correspondences}
\label{sec:prop-corr}

In this section, we define a proper topological correspondence, see Definition~\ref{def:prop-corr}. Section~\ref{sec:some-examples}  discusses some details that may proof helpful to understand the definition of a proper topological correspondence. Without these details, the definition may look artificial or technical. We hope that Example~\ref{exa:concrete-exa-of-G-X-H} in Section~\ref{sec:some-examples} should setup the background for Definition~\ref{def:prop-corr}.

\subsection{Some examples}
\label{sec:some-examples}

 Example~\ref{exa:transf-gpd} revises some well-known facts about transformation groupoids; Example~\ref{exa:equi-rel-gpd} discusses the groupoid associated with a continuous equivalence relation and Example~\ref{exa:concrete-exa-of-G-X-H} puts these two examples together to elaborate the background in the definition of a proper correspondence.

\begin{example}[Transformation groupoid]
  \label{exa:transf-gpd}
  Let \(G\) be a locally compact groupoid and \(X\) a left \(G\)\nb-space. For \((\gamma, x)\in G\ltimes X\), \(r_{G\ltimes X}(\gamma,x)=(r_{G}(\gamma), \gamma x)\) and \(s_{G\ltimes X}(\gamma,x)=(s_G(\gamma),x)\). We often identify the space of units \(\base\times_{\Id_{\base},\base,r_X}X\) of this transformation groupoid with \(X\) in which case, the range and source of the above element are \(r_{G\ltimes X}(\gamma,x)=\gamma x\) and \(s_{G\ltimes X}(\gamma,x)=x\). Given a unit \((r_X(x),x)\in \base[G\ltimes X]\), \(r_{G\ltimes X}\inverse(r_X(x),x)=\{(\gamma, \gamma\inverse x): r_G(\gamma)=r_X(x)\}\).
  It is well known that a Haar system on \(G\) induces one on the transformation groupoid \(G\ltimes X\); if \(\alpha\) is a Haar system on \(G\), then the Haar system on \(G\ltimes X\), which we denote by \({\alpha_2}\), is given by
\[
\int_{G\ltimes X} f\,\dd{\alpha_2}^{x}\defeq \int_G f(\gamma,\gamma\inverse x)\,\dd\alpha^{r_X(x)}(\gamma)
\]
for \(f\in \Contc(G\ltimes X)\) and \(x\in X\homeo\base[(G\ltimes X)]\). 

 Let \(H\) be another groupoid. Assume that \(H\) acts on \(X\) from right. Then \((\gamma,x)\eta\mapsto (\gamma,x\eta)\), for \((\gamma,x)\in G\ltimes X\) and \((x,\eta)\in X\times_{\base[H]}H\), is a right action of groupoid \(H\) on groupoid \(G\ltimes X\) by invertible functors, see~\cite{Holkar2017Construction-of-Corr}*{Definition 1.8 and Example 1.11}.
\end{example}

\begin{example}[The groupoid of an equivalence relation]
\label{exa:equi-rel-gpd}
  Let \(X\) and \(Y\) be spaces, and \(\phi\colon X\to Y\) a continuous map. Then the fibre product  \(X\times_{\phi,Y,\phi}X\defeq\{(x,x'): \phi(x)=\phi(x')\}\) carries a groupoid structure: \((x,x')\) and \((x'',x''')\)
  in \(X\times_{\phi,Y,\phi}X\) are composable if and only if \(x'=x''\) and the composite \((x,x')(x', x''')=(x,x''')\). The space of units of \(X_\phi\) is the diagonal \(\{(x,x)\in X\times_{\phi,Y,\phi}X:x\in X\}\) in \(X\times_{\phi,Y,\phi}X\) which can be identified with \(X\). For the element \((x,x')\) as above,  its
  \begin{enumerate*}[(i)]
  \item source is \(x'\),
  \item range is \(x\) and
  \item inverse \((x,x')\inverse= (x',x)\).
  \end{enumerate*}
 We denote this groupoid by \(X_\phi\); this is the groupoid of the equivalence relation \(x\sim x'\) if and only if \(\phi(x)=\phi(x')\). 

In addition to the above data, let \(\lambda=\{\lambda_y\}_{y\in Y}\) be a continuous family of measures with full support along \(\phi\). Then \(\phi\) induces a Haar system \({\lambda_1}\) on \(X_\phi\); for \(f\in \Contc(X_\phi)\) and \(z\in X\iso \base[X_\phi]\),
\[
  \int_{X_\phi} f\,\dd{\lambda_1}^z \defeq \int_X
  f(z,x)\,\dd\lambda_{\phi(z)}(x).
\] 
Note that \(\lambda_1^z=\delta_z\otimes\lambda_{\phi(z)}\) where \(\delta_x\) is the point mass at \(x\in X\). The family of measures \({\lambda_1}\) is a continuous family of measures due to~\cite{Renault1985Representations-of-crossed-product-of-gpd-Cst-Alg}*{Lemme 1.2} and has full support because \(\lambda\) has full support. It is easy to see that \({\lambda_1}\) is left invariant: for \(f\in \Contc(X_\phi)\) and \((v,w)\in X_\phi\)
  \begin{multline*}
    \int_{X_\phi} f((v,w)(w,x))\,\dd{\lambda_1}^{r_{X_\phi}(v,w)}(w,x)= \int_{X_\phi} f((v,w)(w,x))\,\dd{\lambda_1}^{w}(w,x)\\
=\int_{X} f(v,x)\,\dd\lambda_{\phi(w)}(x)=\int_{X} f(v,x)\,\dd\lambda_{\phi(v)}(x)= \int_{X_\phi} f(v,x)\,\dd{\lambda_1}^{r_{X_\phi}(v,w)}(v,x).
  \end{multline*}
  In the above computation, the third equality due to the fact that \(\phi(v)=\phi(w)\). If \(\lambda\) does not have full support, then \({\lambda_1}\) is a continuous family of left invariant measures and not a Haar system.

  Additionally, if we assume that \(X\) and \(Y\) are right \(H\)\nb-spaces for a groupoid \(H\), and \(\phi\) is an \(H\)\nb-equivariant map, then \((x,y)\eta\mapsto (x\eta,y\eta)\), for \((x,y)\in X\times_{\phi,Y,\phi}X\) and \(\eta\in X\times_{\base[H]}H\), is an action of \(H\) on the groupoid \(X_\phi\) in the sense of~\cite{Holkar2017Construction-of-Corr}*{Definition 1.8}. The momentum map \(r_{X_\phi,H}\colon X_\phi\to \base[H]\) of this action is \(r_{X_\phi,H}(x,y)=s_X(x)\).

If \(\lambda\) is an \(H\)\nb-invariant family of measures, then a routine computation shows that the family of measures \(\lambda_1\) is also \(H\)\nb-invariant.
The example ends here.
\end{example}

We need the following remark: it is well-known that the right action \(r_H(\eta)\eta=s_H(\eta)\) of a groupoid \(H\) on its space of units \(\base[H]\) is well-defined continuous action. Moreover, for any right \(H\)\nb-space \(X\), the momentum map \(s_X\colon X\to \base[H]\) is \(H\)\nb-equivariant for this action.

\begin{example}
  \label{exa:concrete-exa-of-G-X-H}
Let \(G\) and \(H\) be locally compact groupoids, and \(X\) a \(G\)\nb-\(H\)-bispace. Let \(\alpha\) be a Haar system on \(G\) and \(\lambda\) an \(H\)\nb-invariant continuous family of measures on \(X\) along \(s_X\).
  \begin{enumerate}[leftmargin=*]
  \item Then \(s_X\colon X\to \base[H]\) is a \(G\)\nb-invariant map. Let \(X_{s_X}\) be the groupoid associated with the continuous map \(s_X\colon X\to \base[H]\) as in Example~\ref{exa:equi-rel-gpd}. It is a routine calculation to check that the map
 \[
\Mlp_{s_X}\colon G\ltimes X \to X_{s_X},\quad (\gamma, x)\mapsto (\gamma x,x),
 \]
 is a homomorphism of groupoids.  Furthermore, since \(X\) is a \(G\)\nb-\(H\)-bispace, both groupoids, \(G\ltimes X\) and \(X_{s_X}\), carry actions of \(H\) described in Examples~\ref{exa:equi-rel-gpd} and~\ref{exa:transf-gpd} and the map \(\Mlp_{s_X}\) is an \(H\)\nb-equivariant homomorphisms of groupoids. Note that \(\Mlp_{s_X}\) is the restriction of \(r_{G\ltimes X}\times s_{G\ltimes X}\colon G\ltimes X\to X\times X\)  to the closed subspace \(X\times_{s_X,\base[H],s_X}X\subseteq X\times X\).

 Identify \(X\) with the space of units of \(G\ltimes X\) as well as \(X_{s_X}\). Then
 \begin{enumerate}
 \item \(\Mlp_{s_X}|_{\base[G\ltimes X]}=\Id_X\colon X\to X\),
 \item \(r_{G\ltimes X}=r_{X_{s_X}}\circ \Mlp_{s_X}\).
\end{enumerate}
The last equality above implies that \(\Mlp_{s_X}\inverse\circ  r_{X_{s_X}}\inverse (x)=r_{G\ltimes X}\inverse(x)\) for all \(x\in X\).
   \item The Haar system \(\alpha\) on \(G\) induces the Haar system \({\alpha_2}\) on \(G\ltimes X\) (see Example~\ref{exa:transf-gpd}). As in Example~\ref{exa:equi-rel-gpd}, \(\lambda\) induces a continuous invariant family of measures \({\lambda_1}\) on the groupoid \(X_{s_X}\).  Since \(\lambda\) is \(H\)\nb-invariant, so is \(\lambda_1\). If \(\lambda\) has full support, then \({\lambda_1}\) is a Haar system for \(X_{s_X}\).
   \item In addition to the current hypotheses, assume that the action of \(G\) on \(X\) is locally strongly locally free and \(\Mlp_{s_X}\) is open onto its image, that is, \(\Mlp_{s_X}\) is a local homeomorphism onto its image (see Lemma~\ref{lem:loc-str-loc-free-act}). Let \(\RN\colon X\times_{s_X,\base[H],s_X}X\to \R\) be a nonnegative \(H\)\nb-invariant 
     continuous function. Then, for each \(x\in X\homeo\base[X]_{s_X}\), \(\RN(x,\_){\lambda_1}^x\) is an absolutely continuous measure with respect to \({\lambda}_1^x\) on \(X_{s_X}^x\). Furthermore, the Radon-Nikodym derivative \(\frac{\dd\RN(x,\_){\lambda}_1^x}{\dd{\lambda}_1^x}(x,y)=\RN(x,y)\). Let \(\RN\lambda_1\defeq \{\RN(x,\_)\lambda_1^x\}_{x\in X}\). We abuse the notation: for each \(x\in X\homeo\base[X]_{s_X}\), let \(\RN(x,\_){\lambda}_1^x\) denote the measure on \(X_{s_X}\) which equals  \(\RN(x,\_){\lambda}_1^x\) on the fibre \(X_{s_X}^x\) and zero outside it. Now Corollary~\ref{cor:pulling-and-pasting-measures-2} says that for each \(x\in X\homeo\base[X]_{s_X}\), \(\RN(x,\_){\lambda_1}^x\) induces a measure \(\Mlp_{s_X}^*(\RN(x,\_)\lambda_1^x)\) on \(G\ltimes X\), to be precise, on \(r_{G\ltimes X}\inverse(x)\subseteq G\ltimes X\).

     Since \(\RN\) is  \(H\)\nb-equivariant,  so is the family of measures \(\RN\lambda_1\). Recall from (1) above that \(\Mlp_{s_X}\) is an \(H\)\nb-equivariant map. The \(H\)\nb-equivariance of \(\Mlp_{s_X}\) and  \(\RN\lambda_1\) implies that the family of measures
     \[
       \Mlp_{s_X}^*(\RN\lambda_1)\defeq\{\Mlp_{s_X}^*(\RN(x,\_){\lambda}_1^x)\}_{x\in    X}
     \]
    is also \(H\)\nb-invariant--- this claim follows from a direct computation.
   Lemma~\ref{lem:loc-str-loc-free-act} describes this family of measures: for \(U\in \Mlp_{s_X}^{\HoC}\) and \(f\in \Contc(G\ltimes X,U)\),
    \begin{equation}
      \label{eq:RN-lambda-induced-measure}
    \Mlp^*_{s_X}(\RN(x,\_){\lambda_1^x})(f)= (\RN(x,\_){\lambda_1^x})(f')=\int_X f'(x,y) \RN(x,y)\,\dd\lambda_{s_X(x)}(y)  
    \end{equation}
     for any extension \(f'\in \Ext_{\Mlp_{s_X}}(f)\). 
   \item 
     In Definition~\ref{def:prop-corr}, one of the conditions demands that
     \[
       (\Mlp_{s_X}^* (\RN(x,\_){\lambda_1^x}) = {\alpha_2^x}
     \]
     for each \(x\in X\).
     
Recall from (3) above that \(\RN(x,\_){\lambda_1^x}\ll {\lambda_1^x}\) and the Radon-Nikodym derivative \(\frac{\dd(\RN(x,\_){\lambda_1^x})}{\dd{\lambda_1^x}}(x,\_)=\RN(x,\_)\). Lemma~\ref{lem:RN-positive-makes-pullback-measure-equi} says that \({\alpha_2}_x=\Mlp_{s_X}^*(\RN(x,\_){\lambda_1^x})\ll \Mlp_{s_X}^*({\lambda_1^x})\), and, the Radon-Nikodym derivative 
\[
\frac{\dd{\alpha_2^x}}{\dd \Mlp_{s_X}^*({\lambda_1^x})}=\RN\circ\Mlp_{s_X}.
\]
 
  \item Note that, for  \(U\in \Mlp_{s_X}^{\Ho}\) and \(f\in \Contc(G\ltimes X,U)\), we may write
    \[
    f\circ \Mlp_{s_X}|_U\inverse(x,y)= f(\gamma_{(x,y)}\inverse, y)
    \]
using Equation~\ref{eq:loc-sec-inv-y} or
    \[
    f\circ \Mlp_{s_X}|_U\inverse(x,y)=
    f(\gamma_{(x,y)},\gamma_{(x,y)}\inverse x)
    \]
    using Equation~\ref{eq:loc-sec-inv-x}. We use the second form of \(f\circ \Mlp_{s_X}|_U\inverse\) in Definition~\ref{def:prop-corr}.
  \end{enumerate}
\end{example}

\subsection{Proper correspondences}
\label{sec:prop-corr-1}

As advised in the Introduction, reader may eliminate the functions \(\RN\) and the adjoinig function \(\Delta\) in this section during the first reading--- one may consider \(\RN\) and \(\Delta\) are the constant function \(1\), and skip the remarks discussing these functions.
\begin{definition}
  \label{def:prop-corr}
Let \((G,\alpha)\) and \((H,\beta)\) be locally compact groupoids with Haar systems, and \((X,\lambda)\) a topological correspondence from \((G,\alpha)\) to \((H,\beta)\). Let \(X\) be Hausdorff and \(X/H\) paracompact. We call \((X,\lambda)\) \emph{proper} if there is an \(H\)\nb-invariant nonnegative continuous function \(\RN\colon X\times_{s_X,\base[H],s_X}X\to \R\) and following holds:
\begin{enumerate}[i)]
\item the left action of \(G\) on \(X\) is locally strongly locally free, and the map \(\Mlp_{s_X}\) is open onto its image,
\item the map \([r_X]\colon X/H\to \base\) induced by the left momentum map is proper,
\item there is an open cover \(\mathcal{A}\) of \(G\times_{\base} X\) consisting of sets in \(\Mlp_{s_X}^{\HoC}\) and there is an extension \(\Cov'\) of \(\Cov\) via \(\Mlp_{s_X}\) with the property that for all \(U\in \mathcal{A}\), any extension \(U'\in \Cov'\) of it via \(\Mlp_{s_X}\), given \(f\in \Contc(G\ltimes X, U)\), and given any extension \(f'\in \Contc(X\times_{s_X,\base[H],s_X}, U')\)  of \(f\), the following equality holds
 \begin{equation}
    \label{eq:prop-corr}
    \int_{X} f' (x,y)\;\RN(x,y)\;\dd\lambda_{s_X(x)}(y)= \int_{G} f(\gamma_{(x,y)},\gamma_{(x,y)}\inverse x)\,\dd\alpha^{r_X(x)}(\gamma)
 \end{equation}
for every \(x\in X\).
\end{enumerate}
\end{definition}

We make few remarks. In the following remarks, we denote restriction of a measure \(\mu\) to a set \(U\) by \(\mu|_U\) instead of \(\mu_U\). We adopt this convention only for the following discussion.

Lemma~\ref{lem:loc-str-loc-free-act} implies that the first condition in Definition~\ref{def:prop-corr} is equivalent to \(\Mlp_{s_X}\) is a local homeomorphism. Thus (iii) in Definition~\ref{def:prop-corr} makes sense, see Example~\ref{exa:concrete-exa-of-G-X-H} for details. Equation~\eqref{eq:RN-lambda-induced-measure} in that example says that the left side of Equation~\eqref{eq:prop-corr} is \(\Mlp^*_{s_X}(\RN(x,\_){\lambda_1^x})(f)\), and thus Equation~\eqref{eq:prop-corr} says that
\begin{equation}
   \label{equ:meaning-eq-prop-corr}
\Mlp^*_{s_X}(\RN(x,\_){\lambda_1^x})(f)(x)=\alpha_2^x(f)(x)
\end{equation}
for all \(f\in \Contc(G\ltimes X,U)\) where \(U\in \mathcal{A}\subseteq {s_X}^{\HoC}\) and \(x\in X\homeo\base[G\ltimes X]\homeo \base[X]_{s_X}\). Now Equation~\eqref{equ:meaning-eq-prop-corr} along with Lemma~\ref{lem:pulling-and-pasting-measures-3} tell us that for \(U'\in \ExtS_{s_X}(U)\cap\Cov'\), the measure \({\RN(x,\_){\lambda_1^x}}|_{U'}\) is concentrated on \(\Mlp_{s_X}(U)\); Lemma~\ref{lem:closed-image-of-Mlp-measure-conc} discusses when is the  measure  \(\RN(x,\_){\lambda_1^x}\) is concentrated on \(\Mlp_{s_X}(G\times_{\base} \{x\})\).
Furthermore, as discussed in Example~\ref{exa:concrete-exa-of-G-X-H}, the measure \(\alpha_2^x=\Mlp^*_{s_X}(\RN(x,\_){\lambda_1^x})\ll\Mlp^*_{s_X}({\lambda_1^x})\) and the Radon-Nikodym derivative
\[
\frac{\dd\Mlp^*_{s_X}({\lambda_1^x})}{\dd{\alpha_2^x}} = \RN(x,\_)\circ \Mlp_{s_X}\circ \textup{inv}_{G\ltimes X},
\]
see the composite map in~{(4)} of Example~\ref{exa:concrete-exa-of-G-X-H}.
\smallskip

In Definition~\ref{def:prop-corr}, if the open cover \(\{\Mlp_{s_X}(U)\}_{U\in \Mlp_{s_X}^{\HoC}}\) of \(\Mlp_{s_X}(G\ltimes X)\) admits an extension to \(X\times_{s_X,\base[H],s_X}X\), then, due to Lemma~\ref{lem:pulling-and-pasting-measures-3}, we may conclude that the measure \(\RN(x,\_){\lambda_1^x}\) is concentrated on the image of \(\Mlp_{s_X}\).

\begin{lemma}
  \label{lem:closed-image-of-Mlp-measure-conc}
In Definition~\ref{def:prop-corr}, if the image of \(\Mlp_{s_X}\) is a closed subset of \(X\times_{s_X,\base[H],s_X}X\), then the measure \(\RN(x,\_){\lambda_1^x}\) is concentrated on the image of \(\Mlp_{s_X}\).
\end{lemma}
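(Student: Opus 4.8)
The plan is to deduce the statement from Lemma~\ref{lem:pulling-and-pasting-measures-3}(iii), whose hypothesis is furnished verbatim by Definition~\ref{def:prop-corr}(iii). Fix $x\in X$ and abbreviate $Y\defeq X\times_{s_X,\base[H],s_X}X$, $\phi\defeq\Mlp_{s_X}$, $Z\defeq\Img(\phi)$ and $\nu\defeq\RN(x,\_)\lambda_1^x$, the latter viewed as a measure on $Y$ via Example~\ref{exa:concrete-exa-of-G-X-H}(3). As recorded in Example~\ref{exa:concrete-exa-of-G-X-H}(3)--(4) and in the remarks following Definition~\ref{def:prop-corr}, condition (iii) says precisely that $\phi^*(\nu)=\alpha_2^x$ and, read through Equation~\eqref{eq:RN-lambda-induced-measure}, it is exactly the pullback identity Equation~\eqref{eq:pullback-measure-3} of Lemma~\ref{lem:pulling-and-pasting-measures-3} for the cover $\mathcal{A}$ of $G\ltimes X$ and its extension $\mathcal{A}'$. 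Thus the only thing separating us from Lemma~\ref{lem:pulling-and-pasting-measures-3}(iii) is that $\mathcal{A}'$ need not be exhaustive: its members cover only the open neighbourhood $B\defeq\bigcup_{V'\in\mathcal{A}'}V'$ of $Z$.

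First I would use the closedness hypothesis to repair this. Since $Z$ is closed in $Y$, the set $Y\setminus Z$ is open, and, imitating Example~\ref{exa:ext-of-open-cov-closed-sb}, I set $\mathcal{A}''\defeq\mathcal{A}'\cup\{Y\setminus Z\}$. This $\mathcal{A}''$ is again an extension of $\mathcal{A}$ via $\phi$ in the sense of Notation~\ref{not:cov-ext}: the adjoined set satisfies $(Y\setminus Z)\cap Z=\emptyset$, so it falls under the degenerate clause of Notation~\ref{not:cov-ext}(i), while each $V\in\mathcal{A}$ retains its extension inside $\mathcal{A}'\subseteq\mathcal{A}''$. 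It is moreover exhaustive: for $V\in\mathcal{A}$ any extension $V'\in\mathcal{A}'$ has $V'\cap Z=\phi(V)$, and because $\mathcal{A}$ covers $G\ltimes X$ these $\phi(V)$ exhaust $Z$; hence $B\supseteq Z$ and $B\cup(Y\setminus Z)=Y$.

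Next I would observe that adjoining $Y\setminus Z$ costs nothing. The hypothesis Equation~\eqref{eq:pullback-measure-3} of Lemma~\ref{lem:pulling-and-pasting-measures-3} constrains only those members of $\mathcal{A}''$ that are genuine extensions of some $V\in\mathcal{A}$, and since $(Y\setminus Z)\cap Z=\emptyset\neq\phi(V)$ for nonempty $V$, the new set is not one of these. Hence the family of constraints for $\mathcal{A}''$ is identical to that for $\mathcal{A}'$, which holds by Definition~\ref{def:prop-corr}(iii). Applying Lemma~\ref{lem:pulling-and-pasting-measures-3}(iii) with the measure $\nu$ and the exhaustive extension $\mathcal{A}''$ then gives that $\nu=\RN(x,\_)\lambda_1^x$ is concentrated on $Z=\Img(\Mlp_{s_X})$, which is the claim. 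The only delicate point is the bookkeeping: one must be confident that the equality Equation~\eqref{eq:prop-corr} really is the pullback identity Equation~\eqref{eq:pullback-measure-3} (this is where Example~\ref{exa:concrete-exa-of-G-X-H} and Equation~\eqref{eq:RN-lambda-induced-measure} are used), and that the degenerate piece $Y\setminus Z$ simultaneously completes the cover yet imposes no new equation; no genuine analytic difficulty arises once closedness supplies the missing open set.
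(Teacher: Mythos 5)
Your proposal is, in substance, exactly the paper's own proof: the paper's two\nb-line argument cites Example~\ref{exa:ext-of-open-cov-closed-sb} --- which is precisely your adjunction of \(Y\setminus Z\) to manufacture an exhaustive extension --- and the remark preceding the lemma, which is precisely your appeal to Lemma~\ref{lem:pulling-and-pasting-measures-3}(iii). So you have correctly reconstructed the intended argument. The trouble is that this argument, yours and the paper's alike, has a genuine gap, and your own bookkeeping puts a finger on it. You observe that \(Y\setminus Z\) ``imposes no new equation'', since it is not an extension of any nonempty member of \(\mathcal{A}\); hence the hypothesis of Lemma~\ref{lem:pulling-and-pasting-measures-3} for \(\mathcal{A}''\) is word for word the hypothesis for \(\mathcal{A}'\). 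But the desired conclusion --- concentration of \(\nu\defeq\RN(x,\_)\lambda_1^x\) on \(Z\) --- is, by Lemma~\ref{lem:conc-measu-equi}(v), exactly the assertion that \(\nu\) vanishes locally on \(Y\setminus Z\). A family of equations that says nothing about \(\nu\) on \(Y\setminus Z\) cannot force it to vanish there. The defect sits inside Lemma~\ref{lem:pulling-and-pasting-measures-3} itself: the proof of its part (ii) applies part (i) to \emph{every} member of \(\mathcal{A}'\), treating each as an extension of ``some \(V\in\phi^{\Ho,\mathrm{Co}}\)'', but part (i) is available only for extensions of members of \(\mathcal{A}\), where the hypothesis supplies equalities; on members disjoint from \(\phi(X)\) (extensions of \(V=\emptyset\notin\mathcal{A}\)) the restricted measure is completely unconstrained. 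So the step ``apply Lemma~\ref{lem:pulling-and-pasting-measures-3}(iii) to \(\mathcal{A}''\)'' invokes a lemma whose stated hypotheses do not suffice for its conclusion.

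That this is not a pedantic quibble follows from a counterexample built out of the paper's own Example~\ref{exa:proper-basic-corr}. Take \(G=H=\Pt\) and \(X=\{a,b\}\) with the counting measure \(\lambda=\delta_a+\delta_b\) along the constant map \(s_X\colon X\to\Pt\). Identifying \(G\times_{\base}X\) with \(X\), the map \(\Mlp_{s_X}\) is the diagonal embedding into \(X\times X\), and Definition~\ref{def:prop-corr} holds with \(\RN\equiv1\), \(\mathcal{A}=\{\{a\},\{b\}\}\) and \(\mathcal{A}'=\{\{(a,a)\},\{(b,b)\}\}\) --- this is exactly the verification carried out in Example~\ref{exa:proper-basic-corr}, and it is the existential form in which condition (iii) is stated. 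The image \(Z\) (the diagonal) is closed, yet \(\RN(a,\_)\lambda_1^a=\delta_{(a,a)}+\delta_{(a,b)}\) gives mass \(1\) to the open set \(\{(a,b)\}\subseteq Y\setminus Z\), so it is not concentrated on \(Z\). Thus under the literal reading of Definition~\ref{def:prop-corr}(iii) the lemma itself fails, and no proof along these lines --- including the paper's --- can close the gap; closedness supplies the missing open set, but nothing supplies the vanishing of the measure on it. What would repair both statement and proof is a strengthened hypothesis: demand Equation~\eqref{eq:prop-corr} also for the enlarged extensions \(U'\cup W\) with \(W\) open and \(W\cap Z=\emptyset\) (these still lie in \(\ExtS_{\Mlp_{s_X}}(U)\)); subtracting the equality for \(U'\) from the one for \(U'\cup W\) then forces \(\int h\,\RN(x,\_)\,\dd\lambda_1^x=0\) for every \(h\in\Contc(Y,W)\), and together with Lemma~\ref{lem:pulling-and-pasting-measures-3}(i) on the genuine extensions this really does yield concentration when \(Z\) is closed.
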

\begin{proof}
 This follows from  Example~\ref{exa:ext-of-open-cov-closed-sb} and the remark immediately above this lemma.
\end{proof}
A particular instance when the hypothesis of Lemma~\ref{lem:closed-image-of-Mlp-measure-conc} is fulfilled is when \(\Mlp_{s_X}\) is a proper map. We encounter examples of this type when \(\Mlp_{s_X}\) is a homeomorphism or homeomorphism onto a closed subspace of \(X\times_{s_X,\base[H],s_X}X\).

 \begin{remark}
   \label{rem:gp-case-cond-iii}
 Let \((X,\lambda)\) be a topological correspondence from a groupoid with a Haar system \((G,\alpha)\) to another one \((H,\beta)\). If \(X\) is \(H\)\nb-compact, that is \(X/H\) is compact, then the map \([r_X]:X/H\to\base\) induced by the right momentum map \(r_X\) is proper. On the other hand, assume that the space of the units of the groupoid \(G\) is compact, for example, \(G\) is a group. If \([r_X]\) is a proper map, then \(X/H=[r_X]\inverse(\base)\) is a compact space.  Thus when \(\base\) is a compact space, Condition~{(ii)} in Definition~\ref{def:prop-corr} is equivalent to that the quotient space \(X/H\) is compact.
 \end{remark}
\medskip

Now we prepare to state the main theorem. Let \((G,\alpha)\) and \((H,\beta)\) be locally compact groupoids with Haar systems, and \((X,\lambda)\) a topological correspondence from \((G,\alpha)\) to \((H,\beta)\) with \(\Delta\) as the adjoining function. This gives a \(\Cst\)\nb-correspondence \(\Hilm(X)\colon \Cst(G,\alpha)\to \Cst(H,\beta)\) which is a certain completion of \(\Contc(X)\).  From~\cite{Holkar2017Construction-of-Corr} , recall the formulae for the actions of \(\Contc(G)\) and \(\Contc(H)\) on \(\Contc(X)\), and the \(\Contc(H)\)\nb-valued inner product on \(\Contc(X)\) which gives this \(\Cst\)\nb-correspondence: for \(b \in C_c(G)\), \(\zeta,\xi \in C_c(X)\) and \(a \in C_c(H)\)
\begin{equation}\label{def:lr-act-inerpro}
 \left\{\begin{aligned}
(b\cdot \zeta)(x) &\defeq \int_{G^{r_X(x)}} b(\gamma)\, \zeta(\gamma\inverse x) \,
\Delta^{1/2}(\gamma, \gamma\inverse x) \; \dd \alpha^{r_X(x)}(\gamma),\\
(\zeta\cdot a )(x) &\defeq \int_{H^{s_X(x)}} \zeta(x\eta)\, a({\eta}\inverse) \; \dd\beta^{s_X(x)}(\eta) \text{ and }\\
\langle \zeta, \xi \rangle (\eta) &\defeq \int_{X_{r_H(\eta)}}\, \overline{\zeta(x)} \xi(x\eta) \; \dd\lambda_{r_H(\eta)}(x).
  \end{aligned}\right.
\end{equation}
  Denote the {\Star}representation of the pre-\(\Cst\)\nb-algebra \(\Contc(G)\), defined in Equation~\eqref{def:lr-act-inerpro}, on the dense subspace \(\Contc(X)\) of the Hilbert \(\Cst(H,\beta)\)\nb-module \(\Hilm(X)\) by \(\sigma_G\), that is, \(\sigma_G(b)(\zeta)=b\cdot \zeta\). Similarly, let \(\sigma_H\) denote the representation of the pre-\(\Cst\)\nb-algebra \(\Contc(H)\) on the pre-Hilbert \(\Cst(H,\beta)\)\nb-module \(\Contc(X)\). 
\begin{theorem}
  \label{thm:prop-corr-main-thm}
Let \((G,\alpha)\) and \((H,\beta)\) be locally compact groupoids with Haar systems. If \((X,\lambda)\) is a proper topological correspondence from \((G,\alpha)\) to \((H,\beta)\), then the \(\Cst\)\nb-correspondence \(\Hilm(X)\colon \Cst(G,\alpha)\to\Cst(H,\beta)\) is proper.
\end{theorem}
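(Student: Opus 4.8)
The plan is to use the identification of the compacts furnished by Proposition~\ref{prop:implications-of-Cst-cat-2} and to realise, for each \(b\in\Contc(G)\), the operator \(\sigma_G(b)\) as the action of a compactly supported function on the hypergroupoid \(\TComp(X)\), and then to pass to the norm closure. Concretely, recall that \(\Comp(\Hilm(X))\iso\Cst(\TComp(X))\) via the representation \(\sigma_{\TComp(X)}\) of \(\Cst(\TComp(X))\) on \(\Hilm(X)\) given on \(\Contc(\TComp(X))\) by~\eqref{eq:hgpd-l-act-inpro}. Since \(\sigma_G\colon\Cst(G,\alpha)\to\Bound(\Hilm(X))\) is a {\Star}homomorphism, hence norm contractive, \(\Contc(G)\) is dense in \(\Cst(G,\alpha)\), and \(\Comp(\Hilm(X))\) is norm closed in \(\Bound(\Hilm(X))\), it suffices to prove \(\sigma_G(b)\in\Comp(\Hilm(X))\) for every \(b\in\Contc(G)\). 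I would do this by producing \(\tilde b\in\Contc(\TComp(X))\) with \(\sigma_{\TComp(X)}(\tilde b)=\sigma_G(b)\) on the dense submodule \(\Contc(X)\subseteq\Hilm(X)\): two adjointable operators agreeing there are equal, and \(\sigma_{\TComp(X)}(\tilde b)\) is compact because \(\tilde b\in\Contc(\TComp(X))\).

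To build \(\tilde b\), note that by Definition~\ref{def:prop-corr}(i) and Lemma~\ref{lem:loc-str-loc-free-act} the map \(\Mlp_{s_X}\) is a local homeomorphism onto its image, so the transport-through-\(\Mlp_{s_X}\inverse\) machinery of Notation~\ref{not:1}(4), Lemma~\ref{lem:loc-homeo-fn-ext-restr} and Corollary~\ref{cor:pulling-and-pasting-measures-2} is available. I would set \(f_0(\gamma,w)\defeq b(\gamma)\,\Delta^{1/2}(\gamma,w)\,t(w)\in\Contc(G\ltimes X)\) for a dummy \(t\in\Contc(X)\) to be chosen, transport \(f_0\) locally over the cover \(\mathcal A\) of Definition~\ref{def:prop-corr}(iii) through the local sections \(\gamma_{(x,y)}\) of Notation~\ref{not:loc-sec}, and multiply by \(\RN\) to obtain \(\Phi\in\Contc(X\times_{s_X,\base[H],s_X}X)\) which on \(\Img(\Mlp_{s_X})\) equals \(\Phi(x,y)=b(\gamma_{(x,y)})\,\Delta^{1/2}(\gamma_{(x,y)},y)\,\RN(x,y)\,t(y)\). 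Averaging \(\Phi\) over the proper diagonal \(H\)\nb-action via~\eqref{eq:measures-along-the-quotient} gives
\[
\tilde b[x,z]\defeq\int_H\Phi(x\eta,z\eta)\,\dd\beta^{s_X(x)}(\eta)\in\Contc(\TComp(X)),
\]
compactly supported because \(\Phi\) is and the descent map preserves compact supports. The choice of \(t\) is exactly where properness of \([r_X]\) is used: by Definition~\ref{def:prop-corr}(ii) the set \(L\defeq[r_X]\inverse(s_G(\supp b))\subseteq X/H\) is compact, so taking the normalised pre-cutoff \(c_0\) of Example~\ref{exa:cutoff-for-H-space} (which exists since \(X/H\) is paracompact, Lemma~\ref{lem:cutoff-function}, and satisfies \(\int_H c_0(y\eta)\,\dd\beta^{s_X(y)}(\eta)=1\)) and a bump \(\chi\in\Contc(X/H)\) equal to \(1\) on \(L\), the function \(t\defeq c_0\cdot(\chi\circ q_{X/H})\) lies in \(\Contc(X)\) and has \(\bar t(y)\defeq\int_H t(y\eta)\,\dd\beta^{s_X(y)}(\eta)=\chi([y]_H)\), which equals \(1\) for \([y]_H\in L\).

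The verification is then a computation. For \(\xi\in\Contc(X)\) I would expand \(\sigma_{\TComp(X)}(\tilde b)\xi(x)\) using~\eqref{eq:hgpd-l-act-inpro} and push the \(H\)\nb-average through the \(\lambda\)\nb-integral; since the local section \(\gamma_{(x,y)}\), the function \(\RN\), and the adjoining function \(\Delta\) in its second argument are all \(H\)\nb-invariant (Example~\ref{exa:concrete-exa-of-G-X-H}), the inner integral factors as \(b(\gamma_{(x,y)})\,\Delta^{1/2}(\gamma_{(x,y)},y)\,\RN(x,y)\,\bar t(y)\). As \(b(\gamma_{(x,y)})\neq0\) forces \(r_X(y)=s_G(\gamma_{(x,y)})\in s_G(\supp b)\), i.e.\ \([y]_H\in L\), one has \(\bar t(y)=1\) on the support of the integrand, so with \(F_\xi(\gamma,w)\defeq b(\gamma)\Delta^{1/2}(\gamma,w)\xi(w)\in\Contc(G\ltimes X)\),
\[
\sigma_{\TComp(X)}(\tilde b)\xi(x)=\int_X b(\gamma_{(x,y)})\,\Delta^{1/2}(\gamma_{(x,y)},y)\,\RN(x,y)\,\xi(y)\,\dd\lambda_{s_X(x)}(y)=\Mlp_{s_X}^*(\RN(x,\_)\lambda_1^x)(F_\xi),
\]
the last equality by~\eqref{eq:RN-lambda-induced-measure}. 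Definition~\ref{def:prop-corr}(iii), in the form~\eqref{equ:meaning-eq-prop-corr} of the measure identity \(\Mlp_{s_X}^*(\RN(x,\_)\lambda_1^x)=\alpha_2^x\), turns the right-hand side into \(\alpha_2^x(F_\xi)=\int_G b(\gamma)\Delta^{1/2}(\gamma,\gamma\inverse x)\xi(\gamma\inverse x)\,\dd\alpha^{r_X(x)}(\gamma)=\sigma_G(b)\xi(x)\), so the two operators coincide and the reduction is complete.

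The hard part will not be this final computation but the local-to-global bookkeeping: \(\Mlp_{s_X}\) is merely a local homeomorphism onto an image that need be neither closed nor open in \(X\times X\), so every transport through \(\Mlp_{s_X}\inverse\) must be carried out chartwise over \(\mathcal A\), using the extensions and restrictions of Notation~\ref{not:1}, glued by a partition of unity, and I would have to check both that the result is a genuine element of \(\Contc(X\times_{s_X,\base[H],s_X}X)\) independent of the chosen extensions and that \(\tilde b\) is compactly supported. The concentration and extension lemmas of Section~\ref{sec:measure-th}, together with the precise chartwise formulation of~\eqref{eq:prop-corr} and the invariance properties recorded in Example~\ref{exa:concrete-exa-of-G-X-H}, are what make this patching consistent; this is the genuine content of the argument.
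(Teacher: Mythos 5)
Your proposal follows the paper's own proof in all essentials: reduce to \(b\in\Contc(G)\) by density and norm-closedness of \(\Comp(\Hilm(X))\), identify the compacts with the hypergroupoid algebra via Proposition~\ref{prop:implications-of-Cst-cat-2}, manufacture an element of \(\Contc(\TComp(X))\) by tensoring \(b\) with a cutoff-type dummy function, transporting through \(\Mlp_{s_X}\inverse\) chartwise over \(\Cov\), multiplying by \(\RN\) and averaging over \(H\), and finally use Definition~\ref{def:prop-corr}(ii) for compactness of supports and Definition~\ref{def:prop-corr}(iii) to convert the \(\lambda\)\nb-integral into the \(\alpha\)\nb-integral. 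The paper does exactly this, with your \(H\)\nb-invariant bump \(\chi\circ q_{X/H}\) replaced by a partition of unity \(\{\delta_j\}\) on a compact lift \(Q\) of \([r_X]\inverse(s_G(\supp b))\).

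One step in your write-up is not deferred bookkeeping but actually false as stated, and it is worth isolating. The formula \(\Phi(x,y)=b(\gamma_{(x,y)})\,\Delta^{1/2}(\gamma_{(x,y)},y)\,\RN(x,y)\,t(y)\) on \(\Img(\Mlp_{s_X})\), and with it the middle two members of your final displayed chain, presuppose that the element \(\gamma\) with \(\gamma y=x\) is unique. Under condition (i) the action is only locally strongly locally free, so distinct charts \(U\in\Cov\) can produce distinct local sections at the same point \((x,y)\); the paper's own motivating example, \(\Z/2\Z\) acting on \(\Sph^1\) by conjugation, does this at the fixed points \(z=\pm1\). In the extreme case of \(\Z/2\Z\) acting trivially on a point, your formula would give \(\sigma_{\TComp(X)}(\tilde b)\xi=b(\pm1)\xi\) for one choice of section, whereas the correct operator is \(\sigma_G(b)\xi=\bigl(b(1)+b(-1)\bigr)\xi\). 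No single function \(\Phi\) satisfies your pointwise formula; what the partition-of-unity patching actually produces is the \emph{sum} of the chartwise transports (the paper's \(\sum_{i,j}F_{ij}'\)), and Equation~\eqref{eq:prop-corr} must be applied chart by chart to each summand. The multiplicity then comes out right because the partition of unity sums to \(1\) on the \(G\ltimes X\) side, where the decomposition of \(b\otimes t\) is unambiguous, and the \(G\)\nb-integral counts every \(\gamma\) with \(\gamma y=x\) once. With that correction --- which is precisely what the paper's double partition of unity \(\{\epsilon_i\},\{\delta_j\}\) accomplishes --- your argument coincides with the paper's proof; your \(H\)\nb-invariant bump \(\chi\circ q_{X/H}\) is in fact a slightly cleaner way to make the \(H\)\nb-average equal \(1\) than the paper's use of \(\{\delta_j\}\) at that step.
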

The rest of this section is devoted to prove Theorem~\ref{thm:prop-corr-main-thm}. The proof is broken into three steps:
\begin{enumerate*}[(i)]
\item first we show that if \(h\in \Contc(G)\) and
  \(\xi\in\Contc(X)\), then the restriction \((h\otimes\xi)|_{G\times_{\base}X}\in\Contc(G\times_{\base}X)\); this is a well know fact that we revise. 
\item For  given \(b\in\Contc(G)\), we find suitable functions
  \(\epsilon_i\in\Contc(G)\) and \(\delta_j\in \Contc(X)\), where
  \(1\leq i\leq m\) and \(1\leq j\leq n\), and \(m\) and \(n\) are
  natural numbers, such that
  \(f_{ij}\defeq b\epsilon_i\otimes \frac{c}{\bar c}\,\delta_j\) is in
  \(\Contc(G\times_{\base}X,D_{ij})\) for some
  \(D_{ij}\in \Cov\), here \(c\) and \(\bar c\) are fixed
  pre-cutoff and cutoff functions. Recall from Definition~\ref{def:prop-corr} that \(\Cov\) is a fixed cover of \(G\ltimes X\) by sets in \(\Mlp_{s_X}^{\HoC}\).
\item Finally, let \(F_{ij}'\in\Contc(X\times_{s_X,\base[H],s_X}X)\) be 
 an extension of \(f_{ij}\) via \(\Mlp_{s_X}\) which is supported in \(\Cov'\). We show that the operator \(\sigma_G(b)\) on \(\Hilm(X)\) is same as the operator \(\sigma_{\TComp(X)}\left(\sum_{i=1,j=1}^{i=n,j=m}   B_{X\times_{s_X,\base[H],s_X}X}(F_{ij}')\right)\). See the proof of Proposition~\ref{prop:implications-of-Cst-cat-2} for the meaning of \(B_{X\times_{s_X,\base[H],s_X}X}(F_{ij}')\).
\end{enumerate*}
We start by proving follwing lemmas.
\begin{lemma}
  \label{lem:top-1}
  Let \(X\) be a topological space and \(L, K\subseteq X\) subspaces. If \(L\) is closed in \(X\) and \(K\) is (quasi-)compact in \(X\), then \(L\cap K\) is (quasi-)compact in \(L\) as well as in \(X\).
\end{lemma}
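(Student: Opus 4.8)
The plan is to reduce everything to the single classical fact that a closed subset of a quasi-compact space is quasi-compact, and then to transfer quasi-compactness between the various ambient spaces using the observation recorded just before the statement of Lemma~\ref{lem:restr-is-Cc} (namely that for $A\subseteq Y\subseteq X$, the set $A$ is (quasi-)compact in $Y$ if and only if it is so in $X$).

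First I would treat the quasi-compact case. Since $K$ is quasi-compact in $X$, the cited observation gives that $K$ is quasi-compact in the subspace topology, i.e.\ as a space in its own right. Because $L$ is closed in $X$, the set $L\cap K$ is closed in $K$ for the subspace topology. Now I would invoke the elementary fact that a closed subset of a quasi-compact space is quasi-compact: given any cover of $L\cap K$ by sets open in $K$, adjoin the open complement $K\setminus(L\cap K)$ to obtain an open cover of $K$, extract a finite subcover, and discard the adjoined set to recover a finite subcover of $L\cap K$. This shows that $L\cap K$ is quasi-compact in $K$.

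It then remains to move this conclusion to $L$ and to $X$. Applying the observation once more --- first with the triple $L\cap K\subseteq K\subseteq X$ and then with $L\cap K\subseteq L\subseteq X$ --- yields that $L\cap K$ is quasi-compact both in $X$ and in $L$. For the compact case I would additionally note that if $K$ is compact, then it is Hausdorff, so its subspace $L\cap K$ is Hausdorff as well; combined with the quasi-compactness just established, $L\cap K$ is therefore compact in $L$ and in $X$.

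I do not expect any genuine obstacle here, as the statement is purely formal point-set topology. The only thing requiring a little care is the bookkeeping between ``(quasi-)compact in $Y$'' and ``(quasi-)compact in $X$'', which is exactly what the observation preceding Lemma~\ref{lem:restr-is-Cc} is designed to handle, together with keeping the Hausdorff clause straight when upgrading from quasi-compact to compact.
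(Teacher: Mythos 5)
Your proof is correct and takes essentially the same route as the paper's: both rest on the adjoin-the-open-complement-of-$L$ covering trick, followed by the observation preceding Lemma~\ref{lem:restr-is-Cc} to transfer (quasi-)compactness between the ambient spaces $K$, $L$ and $X$. The only differences are cosmetic --- you run the cover argument intrinsically inside $K$ and cite the classical ``closed in quasi-compact'' fact, whereas the paper runs it inside $X$ starting from a cover by open sets of $L$; you also spell out the Hausdorff clause in upgrading from quasi-compact to compact, which the paper leaves implicit.
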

\begin{proof}
  Let \(\{U_i\}_{i\in I}\) be an open cover of \(K\cap L\) in \(L\) where \(I\) is an index set. For each \(i\in I\) let \(U'_i\subseteq X\) be an open set such that \(U_i=U_i'\cap K\). Then \(\{U_i'\}_{i\in I}\cup \{X-L\}\) is an open cover of \(K\) in \(X\). Since \(K\subseteq X\) is (quasi-)compact, choose and open subcover of this cover which covers \(K\); thus there is a finite natural number \(n\) such that \(U_{i_1}',\dots,U_{i_n}'\) and \(X-L\) cover \(K\).  Now one may see that  \(U_{i_1},\dots,U_{i_n}\), is an open cover \(K\cap L\).

Moreover, since \(L\cap K\) is (quasi-)compact in \(L\), it is (quasi-)compact in any subspace of \(X\) containing \(L\), in particular, \(X\) itself.
\end{proof} 

\begin{lemma}
  \label{lem:main-thm-1}
Let \(X\) be a locally compact left \(G\)\nb-space for a locally compact groupoid \(G\). Let \(K\subseteq G\) and \(Q\subseteq X\) be nonempty subsets with \(s_G(K)\cap r_X(Q)\neq \emptyset\). If \(K\) and \(Q\) are quasi-compact (or compact) in \(G\) and \(X\), respectively, then so is \(K\times_{\base}Q\subseteq G\times_{\base}X\).
\end{lemma}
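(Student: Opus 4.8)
The plan is to realise \(K\times_{\base}Q\) as the intersection of a quasi-compact set with a closed set inside the ambient product \(G\times X\), and then invoke Lemma~\ref{lem:top-1}. By definition, \(K\times_{\base}Q=(K\times Q)\cap(G\times_{\base}X)\) as subsets of \(G\times X\), so it suffices to check that \(K\times Q\) is quasi-compact and that \(G\times_{\base}X\) is closed.

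First I would note that \(K\times Q\) is quasi-compact in \(G\times X\). The subspace topology on \(K\times Q\subseteq G\times X\) coincides with the product topology coming from the subspaces \(K\subseteq G\) and \(Q\subseteq X\); since a finite product of quasi-compact spaces is quasi-compact, \(K\times Q\) is quasi-compact as a space, and hence, by the observation preceding Lemma~\ref{lem:restr-is-Cc}, quasi-compact as a subspace of \(G\times X\). Next I would show that \(G\times_{\base}X\) is closed in \(G\times X\). Consider the continuous map \(s_G\times r_X\colon G\times X\to\base\times\base\), \((\gamma,x)\mapsto(s_G(\gamma),r_X(x))\); then \(G\times_{\base}X=(s_G\times r_X)\inverse(\Delta_{\base})\), where \(\Delta_{\base}\) is the diagonal of \(\base\times\base\). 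As the space of units \(\base\) is Hausdorff by our standing conventions, \(\Delta_{\base}\) is closed, and therefore so is its preimage \(G\times_{\base}X\).

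With these two facts in hand, I would apply Lemma~\ref{lem:top-1} to the ambient space \(G\times X\), taking \(L=G\times_{\base}X\) (closed) and the quasi-compact set to be \(K\times Q\): the lemma yields that \(L\cap(K\times Q)=K\times_{\base}Q\) is quasi-compact in \(L=G\times_{\base}X\), which is the assertion. The hypothesis \(s_G(K)\cap r_X(Q)\neq\emptyset\) only guarantees that this set is nonempty. For the compact case, if \(K\) and \(Q\) are moreover Hausdorff then \(K\times Q\) is Hausdorff, so its subspace \(K\times_{\base}Q\) is Hausdorff, and together with quasi-compactness this gives compactness. The sole non-formal ingredient is the closedness of \(G\times_{\base}X\), which rests entirely on the Hausdorffness of \(\base\); everything else is a routine application of Lemma~\ref{lem:top-1} together with the elementary stability of quasi-compactness under finite products.
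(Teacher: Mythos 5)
Your proof is correct and follows essentially the same route as the paper's: both write \(K\times_{\base}Q=(K\times Q)\cap(G\times_{\base}X)\), prove closedness of \(G\times_{\base}X\) in \(G\times X\) via the continuous map \(s_G\times r_X\) and the closed diagonal of the Hausdorff space \(\base\), and then apply Lemma~\ref{lem:top-1}. Your treatment is in fact slightly more explicit than the paper's, spelling out the quasi-compactness of \(K\times Q\) and the Hausdorffness needed for the compact case, which the paper leaves as ``similar is the argument''.
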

\begin{proof}
Since all the sets \(K\), \(Q\) and \(s_G(K)\cap r_X(Q)\) are nonempty, so is \(K\times_{\base}Q\). 

Assume that \(K\subseteq G\) and \(Q\subseteq X\) are quasi-compact. We observe that \(G\times_{\base} X\subseteq G\times X\) is closed; this is proved in the last paragraph below. Then \(K\times_{\base} Q\defeq (K\times Q)\cap (G\times_{\base} X\)) is quasi-compact in \(G\times_{\base} X\) due to Lemma~\ref{lem:top-1}. This proves the assertion about quasi-compactness of \(K\times_{\base} Q\). Similar is the argument for the  claim regarding compactness.

A proof that \(G\times_{\base} X\subseteq G\times X\) is closed: recall from basic topology that a space \(Y\) is Hausdorff if and only if the diagonal \(\Dia(Y)\defeq \{(y,y):y\in Y\}\subseteq Y\times Y\) is closed. By hypothesis, \(\base\), the space of units of the groupoid \(G\), is Hausdorff hence \(\Dia(\base)\subseteq \base\times\base\) is closed. Now the function \(s_G\times r_X\colon G\times X\to \base\times\base\), \((\gamma, x)\mapsto (s_G(\gamma), r_X(x))\), is continuous, and \(G\times_{\base} X= (s_G\times r_X)\inverse(\Dia(\base))\). Being the inverse image of a closed set under a continuous function \(G\times_{\base} X\subseteq G\times X\) is closed.
\end{proof}

\begin{lemma}
  \label{lem:main-thm-3}
Let \(Y\) and \(X\) be locally compact spaces, and let \(K\subseteq Y\) and \(Q\subseteq X\) be quasi-compact spaces. Let \(\{W_d\}_{d\in D}\) be an open cover of \(K\times Q\) where \(D\) is an index set and each \(W_d\) is open \(Y\times X\). Then there are finite open covers \(\{U_1,\dots,U_m\}\) of \(\{K\}\), and \(\{V_i,\dots,V_n\}\) of \(\{Q\}\) such that for each \((i,j)\in\{1,\dots,m\}\times\{1,\dots,n\}\) there is an index \(d\in D\) with \(U_i\times V_j\subseteq W_d\), and 
 \(U_i\subseteq Y\) and \(V_j\subseteq X\) are open.
\end{lemma}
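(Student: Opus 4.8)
The plan is to run a two\nb-stage tube\nb-lemma argument and then pass to a common refinement in order to force the rectangular grid structure. As a preliminary reduction, observe that since \(\{W_d\}_{d\in D}\) covers \(K\times Q\) and each \(W_d\) is open in \(Y\times X\), basic rectangles form a basis for the product topology, so for every point \((k,q)\in K\times Q\) I can pick open sets \(P_{k,q}\subseteq Y\) and \(R_{k,q}\subseteq X\) and an index \(d(k,q)\in D\) with \((k,q)\in P_{k,q}\times R_{k,q}\subseteq W_{d(k,q)}\).

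First I would carry out a tube argument over \(Q\). Fix \(k\in K\). The slice \(\{k\}\times Q\) is homeomorphic to \(Q\), hence quasi\nb-compact, and it is covered by \(\{P_{k,q}\times R_{k,q}:q\in Q\}\). Extracting a finite subcover indexed by a finite set \(S_k\subseteq Q\), I set \(A_k\defeq\bigcap_{q\in S_k}P_{k,q}\), an open neighbourhood of \(k\) in \(Y\). Because \(k\in P_{k,q}\) for every \(q\in S_k\), the finite family \(\mathcal R_k\defeq\{R_{k,q}:q\in S_k\}\) covers \(Q\), and for every \(q\in S_k\) one has \(A_k\times R_{k,q}\subseteq P_{k,q}\times R_{k,q}\subseteq W_{d(k,q)}\). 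Next I would use quasi\nb-compactness of \(K\): the family \(\{A_k:k\in K\}\) is an open cover of \(K\), so I choose a finite subcover \(A_{k_1},\dots,A_{k_m}\) and put \(U_i\defeq A_{k_i}\), which are open and cover \(K\). For each \(i\) the finite open cover \(\mathcal R_{k_i}\) of \(Q\) then has the property that \(U_i\times R\) lies in some \(W_d\) for every \(R\in\mathcal R_{k_i}\).

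The crux, and the step I expect to be the only genuine obstacle, is producing a single finite open cover \(\{V_1,\dots,V_n\}\) of \(Q\) that works simultaneously for all \(U_i\); simply throwing all the \(R_{k_i,q}\) together fails, because a given \(R_{k_{i_0},q_0}\) need not pair with \(U_i\) for \(i\neq i_0\). I would resolve this by taking the common refinement: running over all tuples \((R^{(1)},\dots,R^{(m)})\) with \(R^{(i)}\in\mathcal R_{k_i}\), form \(V\defeq\bigcap_{i=1}^m R^{(i)}\) and discard the empty ones. There are only finitely many such \(V\), each is open, and they cover \(Q\) (given \(x\in Q\), for each \(i\) choose \(R^{(i)}\in\mathcal R_{k_i}\) containing \(x\); their intersection then contains \(x\)). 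By construction each such \(V\) satisfies \(V\subseteq R^{(i)}\) for every \(i\), whence \(U_i\times V\subseteq U_i\times R^{(i)}\subseteq W_d\) for the corresponding \(d\). Relabelling these \(V\)'s as \(V_1,\dots,V_n\) yields exactly the asserted grid.

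It is worth noting that this argument uses only the quasi\nb-compactness of \(K\) and \(Q\) together with the product\nb-topology basis; local compactness and the Hausdorff property play no role, so the hypotheses are stronger than strictly needed for this particular statement. The essential combinatorial point is that the common refinement makes every \(V_j\) sit inside one member of \emph{each} of the \(m\) covers \(\mathcal R_{k_i}\) at once, which is precisely what guarantees the containment \(U_i\times V_j\subseteq W_d\) for every pair \((i,j)\).
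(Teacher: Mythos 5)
Your proof is correct. The paper gives no argument of its own here --- it simply defers to \cite{Tu2004NonHausdorff-gpd-proper-actions-and-K}*{Lemma 7.15}, whose proof is exactly this two\nb-stage tube argument (finite subcover along each slice \(\{k\}\times Q\), intersect the \(Y\)\nb-factors, then a finite subcover of \(K\)) followed by the common\nb-refinement step that forces the full grid property --- so your write\nb-up is essentially the paper's proof, spelled out in full with ambient\nb-open covers, and your closing observation that only quasi\nb-compactness is used (not local compactness or Hausdorffness) is also accurate.
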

\begin{proof}
The proof of this lemma is exactly same as the one of Lemma~\cite{Tu2004NonHausdorff-gpd-proper-actions-and-K}*{7.15} except that one replaces all the open covers in \(K\) and \(Q\) by open covers in \(Y\) and \(X\), respectively.
\end{proof}

Now we write the proof of Theorem~\ref{thm:prop-corr-main-thm}; during the first reading of this proof, reader may assume that \(\RN\) and the adjoining function \(\Delta\) are the constant function \(1\).

\begin{proof}[Proof of Theorem~\ref{thm:prop-corr-main-thm}] 
\noindent 1) Let \((G,\alpha)\), \((H,\beta)\) and \((X,\lambda)\) be as in the statement of the theorem. Let \(\Delta\) be the adjoining function of the topological correspondence. Let \(\sigma_G\) and \(\sigma_{\TComp}\) have the meaning as the discussion following Equation~\eqref{def:lr-act-inerpro} and Notation~\ref{not:Hgpd-of-comp-operators}, respectively.

Recall from the proof of Proposition~\ref{prop:implications-of-Cst-cat-2} that the Haar system \(\beta\) of \(H\) induces a family of measures \(\beta_{X\times_{s_X,\base[H],s_X}X}\) along the quotient map \(X\times_{s_X,\base[H],s_X}X\to (X\times_{s_X,\base[H],s_X}X)/H\) similar to the one in Equation~\eqref{eq:measures-along-the-quotient}. The image of the corresponding integration map \(B_{X\times_{s_X,\base[H],s_X}X}\colon \Contc(X\times_{s_X,\base[H],s_X}X)\to \Contc((X\times_{s_X,\base[H],s_X}X)/H)\) is dense.
\medskip

\noindent 2) Let \(b\in\Contc(G)\) be a given nonzero function. In (6) of this proof, we show that there are natural numbers \(m\) and \(n\), and a function \(F_{ij}\in \Contc(X\times_{s_X,\base[H],s_X}X)\) for \(0\leq i \leq m\) and \(0\leq j\leq n\) with
\[
\sigma_G(b)\xi=\sum_{i,j=1}^{i=m,j=n}\sigma_{\TComp(X)}(B_{X\times_{s_X,\base[H],s_X}X}(F_{ij}))\xi
\]
for all \(\xi\in\Contc(X)\).

Since every function in \(\Contc(G)\) is a finite linear combination of functions in \({\Contc}_0(G)\), we may assume that \(b\in {\Contc}_0(G)\), see~\cite{Khoshkam-Skandalis2002Reg-Rep-of-Gpd-Cst-Alg-and-Applications}*{Lemma 1.3} for details. Then \(K=\supp(b)\subseteq T\) where \(T\subseteq G\) is open and Hausdorff. Consequently, \(s_G(K)\subseteq\base\) is compact. Use Condition~{(ii)} in Definition~\ref{def:prop-corr} to see that \([r_X]\inverse(s_G(K))\subseteq X/G\) is compact. Since the quotient map \(q\colon X\to X/G\) is open, we choose a compact set \(Q\subseteq X\) with \(q(Q)=[r_X]\inverse(K)\). Then \(s_G(K)=r_X(X)\neq\emptyset\). Now apply Lemma~\ref{lem:main-thm-1} to the sets \(K\subseteq G\) and \(Q\subseteq X\) to see that \(K\times_{\base}Q\subseteq G\times_{\base}X\) is compact.

\medskip

\noindent 3) Let
\(
  \tilde{\Cov}= \{\tilde{A}\subseteq G\times X: \tilde{A}\text{ is open in } G\times X, \text{ and } \tilde{A}\cap (G\times_{\base} X)\in \Cov \}\);
  this is a collection of open sets in \(G\times K\) which covers \(G\times_{\base}X\). Since \(K\times Q=\supp(b)\times Q\) and \(G\times_{\base}X\) are closed in \(G\times X\), so is \(K\times_{\base} Q\defeq (K\times Q)\cap (G\times_{\base}X)\). Therefore, \(\tilde{\Cov}\cup \{(G\times X)-(K\times_{\base} Q)\}\) is an open cover of \(K\times Q\) consisting of open sets in \(G\times X\). Apply Lemma~\ref{lem:main-thm-3} to this cover to get finitely many open sets \(U_1,\dots,U_m\) in \(G\) and  finitely many open sets \(V_1,\dots,V_m\) in \(X\) which cover \(K\) and \(Q\), respectively, and for each \((\gamma,x)\in K\times_{\base}Q\), there are indices \(i\in\{1,\dots,n\}\) and \(j\in\{1,\dots,m\}\) such that \((\gamma,x)\in U_i\times V_j\subseteq A\) for some \(A\in\tilde{\Cov}\).

  Therefore, for \(1\leq i \leq m\) and \(1\leq j\leq n\) and any functions \(f\in \Contc(G,U_i)\) and \(g\in \Contc(X, V_j)\), the restriction \(f\otimes_{\base}g\) is supported in a set in \(\Cov\).

  Let \(\{\epsilon_i\}_{i=1}^n\) be a partition of unity of \(K\) subordinate to the open cover \(\{U_i\}_{i=1}^n\) and \(\{\delta_j\}_{j=1}^m\) be the one of \(Q\) subordinate to \(\{V_j\}_{j=1}^m\).
\medskip

\noindent 4) Recall Example~\ref{exa:cutoff-for-H-space}. As in this example, let \(c: X\to \bar\R^+\) be a pre-cutoff function, that is, a function similar to the function \(F\) in Lemma~\ref{lem:cutoff-function}. Let \(\bar c\) be same as in the example; \(\bar c\) is obtained by averaging \(c\) over \(H\):
\[
\bar c(x)= \int_{H} c(x\eta) \;\dd\beta^{s_X(x)}(\eta) \quad \text{ for } u\in X.
\]
Therefore, as discussed in the example, \(\bar c\) is an \(H\)\nb-invariant function.

Fix \((i,j)\in\{1,\dots,m\}\times\{1,\dots,n\}\). Then the product \(b\epsilon_i \in \Contc(G, U_i)\). Similarly, \((c/\bar c)\,\delta_j\in \Contc(X,V_j)\) and the function
\begin{equation}
  \label{eq:def-l-ij}
l_{ij}\defeq \left(b\epsilon_i\,\otimes_{G\times_{\base}X}\; \frac{c}{\bar c}\, \delta_j\right) \;\cdot\,\Delta
\end{equation}
is in \(\Contc(G\times_{\base}X, U_i\times_{\base[H]} V_j)\), due to Lemma~\ref{lem:main-thm-1}.

Recall from (3) above that each \(l_{ij} \) is supported in a set in \(\Cov\subseteq \Mlp_{s_X}^{\HoC}\). Fix an extension \(F_{ij}\in \Ext_{\Mlp_{s_X}}(l_{ij})\) which is supported in a set in \(\Cov'\). Using the local sections in Equation~\eqref{eq:loc-sec-inv-x} we see that for \((x,y)\in\Mlp_{s_X}(U_i\times_{\base} V_j)\),
\begin{align*}
  F_{ij}(x,y)&=  l_{ij}\circ \Mlp_{s_X}|_{U_i\times_{\base}V_j}\inverse(x,y) \\
  &= b(\gamma_{(x,y)})\;\epsilon(\gamma_{(x,y)})\;\delta(\gamma_{(x,y)}\inverse\, x)\, \frac{c(\gamma_{(x,y)}\inverse\, x)}{\bar{c}(\gamma_{(x,y)}\inverse\, x)}\; \Delta(\gamma_{(x,y)},\gamma_{(x,y)}\inverse\, x).
\end{align*}
\medskip

\noindent 5)
 Let 
\(
F_{ij}'=F_{ij}\RN
\)
where \(\RN\colon X\times_{s_X,\base[H],s_X}X\to \R^*\) is the \(H\)\nb-invariant nonnegative 
continuous function associated to the proper correspondence \((X,\lambda)\). Since \(F_{ij}\in \Ext_{\Mlp_{s_X}}(l_{ij})\) is continuous function with compact support, and \(\RN\) is continuous,  \(F_{ij}'\) is also continuous and compactly supported.

Let \(\xi\in \Contc(X)\) and recall from (1) above that \(B_{X\times_{s_X,\base[H],s_X}X}\) is the family of measures along the quotient map \(X\times_{s_X,\base,s_X}X\to (X\times_{s_X,\base,s_X}X )/H\) as in Equation~\eqref{eq:measures-along-the-quotient}. Then
\begin{align*}
&\sigma_{\TComp(X)}(B_{X\times_{s_X,\base[H],s_X}X}(F_{ij}'))\xi(x)\\
=&\int_XB_{X\times_{s_X,\base[H],s_X}X}(F_{ij}')[x,y]\xi(y)\;\dd\lambda_{s_X(x)}(y)\\
=&\int_X\int_H F_{ij}'(x\eta,y\eta)\xi(y)\;\dd\beta^{s_X(x)}(\eta)\,\dd\lambda_{s_X(x)}(y) \\
 =&\int_X\int_H F_{ij}(x\eta,y\eta)\;\RN(x\eta,y\eta)\;\xi(y)\;\dd\beta^{s_X(x)}(\eta)\,\dd\lambda_{s_X(x)}(y).
\end{align*}
Now use the \(H\)\nb-invariance of \(\RN\) which is built in its definition (see Definition~\ref{def:prop-corr}) to see that the above term equals
\[
\int_X\int_H F_{ij}(x\eta,y\eta)\;\RN(x,y)\;\xi(y)\;\dd\beta^{s_X(x)}(\eta)\,\dd\lambda_{s_X(x)}(y).
\]
Using Fubini's theorem we write the above term as
\[
\int_H\left(\int_X F_{ij}(x\eta,y\eta)\;\RN(x,y)\;\dd\lambda_{s_X(x)}(y)\right)\,\dd\beta^{s_X(x)}(\eta).
\]
Recall from (4) above that \(F_{ij}\in \Ext_{\Mlp_{s_X}}(l_{ij})\) and \(F_{ij}\) is supported in an element of \(\Cov'\). Therefore, Equation~\eqref{eq:prop-corr} in Definition~\ref{def:prop-corr} allows us to change the above integral on \(X\) to the one on \(G\) as
\[
\int_H\int_G l_{ij} (\gamma_{(x\eta,y\eta)}, \gamma_{(x\eta,y\eta)}\inverse x\eta)\;\xi(y)\;\dd\alpha^{r_X(x)}(\gamma)\,\dd\beta^{s_X(x)}(\eta).
\]
Now substitute the value of \(l_{ij}\) in above equation from Equation~\eqref{eq:def-l-ij}; we write \(\gamma\) instead of \(\gamma_{(x\eta,y\eta)}\) for simplicity and then the previous term looks
\[
  \int_H\int_G  b(\gamma)\epsilon_i(\gamma)\, \delta_j(\gamma\inverse x\eta)\xi(\gamma\inverse x\eta)\frac{c(\gamma\inverse x\eta)}{\bar c(\gamma\inverse x\eta)}\;\Delta(\gamma,\gamma\inverse x\eta)
  \;\,\dd\alpha^{r_X(x)}(\gamma)\,\dd\beta^{s_X(x)}(\eta).
\]
\smallskip

\noindent 6)
Recall from Example~\ref{exa:cutoff-for-H-space} that \(\bar c\) is \(H\)\nb-invariant, and recall from~\cite{Holkar2017Construction-of-Corr} that \(\Delta\) is also \(H\)\nb-invariant. We incorporate these changes and compute:
\begin{align*}
&\left(\sum_{i,j=1}^{i=m,j=n}\sigma_{\TComp(X)}\left(B_{X\times_{s_X,\base[H],s_X}X}(F_{ij}')\right)\right)\xi(x)\\
&=\sum_{i,j=1}^{i=m,j=n}\int_H\int_G b(\gamma)\,\epsilon_i(\gamma)\,\delta_j(\gamma\inverse x \eta)\, \frac{c(\gamma\inverse x \eta)}{\bar{c}(\gamma\inverse x)}\,\xi(\gamma\inverse x )\, \Delta(\gamma,\gamma\inverse x)
  \;\,\dd\alpha^{r_X(x)}(y)\dd\beta^{s_X(x)}(\eta)\\
&=\sum_{i=1}^{i=m}\int_G b(\gamma)\,\epsilon_i(\gamma)\,\xi(\gamma\inverse x )  \Delta(\gamma,\gamma\inverse x) \left(\sum_{j=1}^n\int_H\delta_j(\gamma\inverse x \eta)\, \frac{c(\gamma\inverse x \eta)}{\bar{c}(\gamma\inverse x)}
\dd\beta^{s_X(x)}(\eta)\right)\,\dd\alpha^{r_X(x)}(y)\\
&=\sum_{i=1}^{i=m}\int_G b(\gamma)\,\epsilon_i(\gamma)\,\xi(\gamma\inverse x )  \Delta(\gamma,\gamma\inverse x)\! \left(\int_H\left(\sum_{j=1}^n\delta_j(\gamma\inverse x \eta)\right) \frac{c(\gamma\inverse x \eta)}{\bar{c}(\gamma\inverse x)}\dd\beta^{s_X(x)}(\eta)\right)\!\dd\alpha^{r_X(x)}(y).
\end{align*}
We use Fubini's theorem during the second step above. Now use the fact that \(\{\delta_j\}_{j=1}^{n}\) is a partition of unity subordinate to the cover \(\{V_j\}_{j=1}^n\) of \(Q\). Then the last term equals
\[
  \sum_{i=1}^{i=m}\int_G b(\gamma)\,\epsilon_i(\gamma)\,\xi(\gamma\inverse x ) \Delta(\gamma,\gamma\inverse x) \left(\int_H \frac{c(\gamma\inverse x \eta)}{\bar{c}(\gamma\inverse x)}     \dd\beta^{s_X(x)}(\eta)\right)\;\,\dd\alpha^{r_X(x)}(y).
\]
Note that the integrant in the second integral, \(\dd\beta^{s_X(x)}\), is the cutoff function \((c/\bar c )\circ s_{X\rtimes H}\), see Example~\ref{exa:cutoff-for-H-space}. With this observation we continue the computing the last term:
\begin{align*}
&\sum_{i=1}^{i=m}\int_G b(\gamma)\,\epsilon_i(\gamma)\,\xi(\gamma\inverse x ) \Delta(\gamma,\gamma\inverse x) \left( \frac{1}{\bar{c}(\gamma\inverse x)}\int_H c(\gamma\inverse x \eta)\,\dd\beta^{s_X(x)}(\eta)\right)\;\,\dd\alpha^{r_X(x)}(y)\\
&= \sum_{i=1}^{i=m}\int_G b(\gamma)\epsilon_i(\gamma)\,\xi(\gamma\inverse x ) \Delta(\gamma,\gamma\inverse x)\,\dd\alpha^{r_X(x)}(y)\\
&= \int_G b(\gamma)\left(\sum_{i=1}^{i=m} \epsilon_i(\gamma)\right)\xi(\gamma\inverse x ) \Delta(\gamma,\gamma\inverse x)\,\dd\alpha^{r_X(x)}(y)=\sigma_G(b)\xi(x).
\end{align*}
To get the last equality we use the fact that \(\{\epsilon_i\}_{i=1}^m\) is a partition of unity subodinate the cover \(\{U_i\}_{i=1}^m\) of \(K\).

Thus, for given \(b\in\Contc(G)\), we have found finite natural numbers \(m\) and \(n\), and function \(F_{ij}'\in \Contc(X\times_{s_X,\base,s_X}X)\), where \(0\leq i \leq m\) and \(0\leq j\leq n\), such that
\[
\sigma_G(b)=\sum_{i,j=1}^{i=m,j=n}\sigma_{\TComp(X)}(B_{X\times_{s_X,\base[H],s_X}X}(F_{ij}'))
\]
where each \(\sigma_{\TComp(X)}(B_{X\times_{s_X,\base[H],s_X}X}(F_{ij}))\), as per Proposition~\ref{prop:implications-of-Cst-cat-2}, is a compact operator on \(\Hilm(X)\) .
\end{proof}

\begin{corollary}
  \label{cor:prop-corr-gives-kk-cycle}
  Let \((G,\alpha)\) and \((H,\beta)\) be localy compact goupoids equipped with Haar systems. Then a proper topological correspondence \((X,\lambda)\) from \((G,\alpha)\) to \((H,\beta)\) defines an element in \(\KK(\Cst(G,\alpha),\Cst(H,\beta))\).
\end{corollary}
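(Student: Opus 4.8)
The plan is to obtain the $\KK$\nb-class as a direct consequence of Theorem~\ref{thm:prop-corr-main-thm} together with the standard passage from a proper $\Cst$\nb-correspondence to a Kasparov bimodule. By Theorem~\ref{thm:prop-corr-main-thm}, the proper topological correspondence $(X,\lambda)$ produces a proper $\Cst$\nb-correspondence $\Hilm(X)\colon \Cst(G,\alpha)\to\Cst(H,\beta)$; in other words, writing $\phi\defeq\sigma_G$ for the representation of $\Cst(G,\alpha)$ on the Hilbert $\Cst(H,\beta)$\nb-module $\Hilm(X)$, we have $\phi(a)\in\Comp(\Hilm(X))$ for every $a\in\Cst(G,\alpha)$.

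Recall that a Kasparov $\Cst(G,\alpha)$\nb-$\Cst(H,\beta)$-bimodule is a triple $(E,\phi,F)$ consisting of a $\Z/2\Z$\nb-graded Hilbert $\Cst(H,\beta)$\nb-module $E$, a graded representation $\phi\colon\Cst(G,\alpha)\to\Bound(E)$, and an odd operator $F\in\Bound(E)$ such that $[F,\phi(a)]$, $(F^2-1)\phi(a)$ and $(F-F^*)\phi(a)$ all lie in $\Comp(E)$ for every $a\in\Cst(G,\alpha)$. First I would equip $\Hilm(X)$ with the trivial grading, placing it entirely in even degree (so that the odd summand is the zero module), retain $\phi$ as the left action, and take $F=0$; the zero operator is trivially odd for this grading.

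It remains to verify the three Kasparov conditions. Two of them, $[F,\phi(a)]=0$ and $(F-F^*)\phi(a)=0$, hold identically. The third reads $(F^2-1)\phi(a)=-\phi(a)$, and this lies in $\Comp(\Hilm(X))$ precisely because the correspondence is proper, which is the content of Theorem~\ref{thm:prop-corr-main-thm}. Hence $(\Hilm(X),\phi,0)$ is a Kasparov bimodule, and its homotopy class is the required element of $\KK(\Cst(G,\alpha),\Cst(H,\beta))$.

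The honest remark is that there is no real obstacle here: all the analytic work has already been carried out in Theorem~\ref{thm:prop-corr-main-thm}, and this corollary is merely the formal reinterpretation of ``proper $\Cst$\nb-correspondence'' in the language of Kasparov cycles, where properness is exactly what licenses the choice $F=0$. The only point worth a line of care is the bookkeeping of the trivial grading, so that $F=0$ counts as an odd operator and the triple genuinely qualifies as a Kasparov cycle rather than merely a correspondence.
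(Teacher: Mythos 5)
Your proposal is correct and follows the same route as the paper: invoke Theorem~\ref{thm:prop-corr-main-thm} to get properness of \(\Hilm(X)\), then observe that \((\Hilm(X),\sigma_G,0)\) with the zero operator is a Kasparov cycle, which is exactly the paper's \([0,\Hilm(X)]\). The paper states this in one line; you have merely spelled out the verification that \(F=0\) (with the trivial grading) satisfies the Kasparov conditions, with properness doing the work for \((F^2-1)\phi(a)\in\Comp(\Hilm(X))\).
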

\begin{proof}
  Theorem~\ref{thm:prop-corr-main-thm} says that \(\Hilm(X)\) is a proper \(\Cst\)\nb-correspondence from \(\Cst(G,\alpha)\) to \(\Cst(H,\beta)\). Hence \([0,\Hilm(X)]\in\KK(\Cst(G,\alpha),\Cst(H,\beta))\) where \(0\in \Bound_{\Cst(H,\beta)}(\Hilm(X))\) is the zero operator.
\end{proof}

Recall Section {2.5.2} in~\cite{mythesis} that discusses isomorphism of topological correspondence.
\begin{proposition}
  \label{prop:prop-iso-corr}
Let \((G,\alpha)\) and \((H,\beta)\) be locally compact groupoids, and let \((X,\lambda)\) and \((Y,\mu)\) be two correspondences from \((G,\alpha)\) to \((H,\beta)\). Let \(\phi\colon X\to Y\) be a \(G\)-\(H\)\nb-equivariant homeomorphism which implements an isomorphism of correspondences. Then, if \((X,\lambda)\) is proper, then so is \((Y,\beta)\).
\end{proposition}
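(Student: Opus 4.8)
The plan is to verify the three conditions of Definition~\ref{def:prop-corr} for $(Y,\mu)$ by transporting the data for $(X,\lambda)$ along $\phi$. Since $\phi$ implements an isomorphism of correspondences it intertwines the momentum maps, $r_Y\circ\phi=r_X$ and $s_Y\circ\phi=s_X$, commutes with both the $G$- and the $H$-actions, and relates the families of measures by $\phi_*\lambda_u=\mu_u$ for every $u\in\base[H]$ (should the convention in force only give $\mu_u=D_u\,\phi_*\lambda_u$ for a positive continuous $H$-invariant Radon--Nikodym factor $D$, I would absorb $D$ into the auxiliary function $\RN_Y$ chosen below, using Lemma~\ref{lem:RN-positive-makes-pullback-measure-equi}). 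Because the momentum maps agree, $\phi$ induces homeomorphisms $\psi_G\defeq\Id_G\times_\base\phi\colon G\times_\base X\to G\times_\base Y$ and $\psi_H\defeq\phi\times_{\base[H]}\phi\colon X\times_{s_X,\base[H],s_X}X\to Y\times_{s_Y,\base[H],s_Y}Y$, and the $G$-equivariance of $\phi$ yields the intertwining identity
\begin{equation*}
\Mlp_{s_Y}\circ\psi_G=\psi_H\circ\Mlp_{s_X},
\end{equation*}
so that $\Mlp_{s_Y}=\psi_H\circ\Mlp_{s_X}\circ\psi_G\inverse$.

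With this identity in hand, conditions (i) and (ii) are formal. Since pre- and post-composition with homeomorphisms preserves being locally one-to-one and open onto the image, $\Mlp_{s_Y}$ is a local homeomorphism onto its image exactly when $\Mlp_{s_X}$ is; hence, by Lemma~\ref{lem:loc-str-loc-free-act}, condition (i) holds for $Y$. (Equivalently, $\phi$ carries $\Ist(x)_G$ onto $\Ist(\phi(x))_G$, so local strong local freeness is preserved.) For condition (ii), the $H$-equivariant homeomorphism $\phi$ descends to a homeomorphism $[\phi]\colon X/H\to Y/H$ with $[r_Y]\circ[\phi]=[r_X]$; thus $[r_Y]=[r_X]\circ[\phi]\inverse$ is proper because $[r_X]$ is. The standing hypotheses of Definition~\ref{def:prop-corr} transfer as well: $Y\homeo X$ is Hausdorff and $Y/H\homeo X/H$ is paracompact.

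It remains to verify condition (iii). I would take $\RN_Y\defeq\RN_X\circ\psi_H\inverse$, which is continuous, nonnegative and $H$-invariant since $\psi_H$ is an $H$-equivariant homeomorphism and $\RN_X$ has these properties, together with the cover $\mathcal{A}_Y\defeq\psi_G(\mathcal{A}_X)$ and the extension $\mathcal{A}_Y'\defeq\psi_H(\mathcal{A}_X')$. The intertwining identity shows that $\psi_G$ maps $\Mlp_{s_X}^{\HoC}$ bijectively onto $\Mlp_{s_Y}^{\HoC}$ and that $\mathcal{A}_Y'$ is an extension of $\mathcal{A}_Y$ via $\Mlp_{s_Y}$, so these choices are admissible. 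Given $U_Y=\psi_G(U_X)\in\mathcal{A}_Y$, an extension $U_Y'\in\mathcal{A}_Y'$, a function $g\in\Contc(G\ltimes Y,U_Y)$ and an extension $g'\in\Ext_{\Mlp_{s_Y}}(g)$, set $f\defeq g\circ\psi_G\in\Contc(G\ltimes X,U_X)$ and $f'\defeq g'\circ\psi_H\in\Ext_{\Mlp_{s_X}}(f)$. For $y_1\in Y$ write $y_1=\phi(x_1)$; then $s_Y(y_1)=s_X(x_1)$ and $r_Y(y_1)=r_X(x_1)$. Using $\mu_{s_Y(y_1)}=\phi_*\lambda_{s_X(x_1)}$ and $\RN_Y\circ\psi_H=\RN_X$, the change of variables $y_2=\phi(x_2)$ turns the left-hand side of Equation~\eqref{eq:prop-corr} for $(Y,\mu)$ into $\int_X f'(x_1,x_2)\,\RN_X(x_1,x_2)\,\dd\lambda_{s_X(x_1)}(x_2)$, which by Equation~\eqref{eq:prop-corr} for $(X,\lambda)$ equals the corresponding pullback integral over $G$. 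Finally, $G$-equivariance and injectivity of $\phi$ identify the local sections, $\eta_{(\phi(x_1),\phi(x_2))}=\gamma_{(x_1,x_2)}$, and $f(\gamma,\gamma\inverse x_1)=g(\gamma,\gamma\inverse y_1)$, so this pullback integral is exactly the right-hand side of Equation~\eqref{eq:prop-corr} for $(Y,\mu)$. Hence condition (iii) holds and $(Y,\mu)$ is proper.

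The routine content is the bookkeeping just indicated; the one genuine point to pin down is the precise relationship between $\mu$ and $\phi_*\lambda$ furnished by the isomorphism of correspondences (Section~2.5.2 of~\cite{mythesis}). Once that is $\phi_*\lambda=\mu$ up to a positive $H$-invariant factor, every remaining ingredient---the matching of the charts $\Mlp_{s_X}^{\HoC}$ with $\Mlp_{s_Y}^{\HoC}$, of their extensions, and of the local sections---follows mechanically from the single intertwining identity $\Mlp_{s_Y}\circ\psi_G=\psi_H\circ\Mlp_{s_X}$, so I expect no serious obstacle beyond this identification.
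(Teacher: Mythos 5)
Your proposal is correct and follows essentially the same route as the paper's proof: transport the covers, the extensions and the function $\RN$ along the homeomorphisms $\Id_G\times_{\base}\phi$ and $\phi\times_{\base[H]}\phi$ using the intertwining identity, deduce condition (ii) from $[r_Y]=[r_X]\circ[\phi]\inverse$, and handle the measure identification by multiplying $\RN_X\circ\psi_H\inverse$ by the Radon--Nikodym derivative $\dd\phi_*(\lambda)/\dd\mu$ — which is precisely the paper's definition of $\RN_Y$. The only cosmetic difference is that you state the Radon--Nikodym correction conditionally, whereas the paper builds it into $\RN_Y$ from the start (the notion of isomorphism in use does only give $\mu_u\sim\phi_*\lambda_u$, so that correction is indeed needed).
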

\begin{proof}
 The isomorphism of correspondences \(\phi\) induces a \(H\)\nb-equivariant isomorphism of topological groupoids
\[
\phi'\colon G\ltimes X\to G\ltimes Y,\quad (\gamma,x)\mapsto (\gamma, \phi(x)).
\]
We also get the following commutative square
\begin{figure}[htb]
  \centering
\[
\begin{tikzcd}[scale=2]
 G\times_{\base}X\arrow{r}{\Mlp_{s_X}}\arrow{d}{\phi'}
 & X\times_{s_X,\base[H],s_X}X \arrow{d}{\phi\times_{\base[H]}\phi}\\
 G\times_{\base}Y \arrow{r}{\Mlp_{s_Y}} &  Y\times_{s_Y,\base[H],s_Y}Y&
 \end{tikzcd}
\]
\caption{}
\label{fig:iso-prop-corr-1}
\end{figure}
In this square, the vertical arrows are isomorphisms. We leave it as excercise to reader to prove that if \(\Mlp_{s_X}\) is a local homeomorphism onto its image, then so is \(\Mlp_{s_Y}\).

The map \(\phi\) induces a \(G\)\nb-equivariant isomorphism \([\phi]\colon X/H\to Y/H\). Furthermore, \([r_X]=[r_Y]\circ [\phi]\), that is, \([r_X]\circ [\phi]\inverse=[r_Y]\). Therefore, \([r_X]\) is a proper map if and only if\([r_Y]\)  is so. By hypothesis \([r_X]\) is proper, therefore \([r_Y]\) is proper.

Finally, let \(\Cov\) and \(\Cov'\) be open covers of \( G\times_{\base}X\) and \(X\times_{s_X,\base[H],s_X}\), respectively, which satisfies the last condition in Definition~\ref{def:prop-corr}. Then
\begin{align*}
  \Cov[B]\defeq& \{\phi'(U): U\in \Cov\},\\
  \Cov[B]'\defeq& \{\phi\times_{\base[H]}\phi(V): V\in \Cov'\}
\end{align*}
are open covers of \( G\times_{\base}Y\) and \(Y\times_{s_X,\base[H],s_X}\), respectively, which satisfy the last  condition in Definition~\ref{def:prop-corr}.

For \(u\in \base[H]\), let \(\phi_*(\lambda_u)\) be the measure induced by \(\lambda_u\) on \(Y\) via \(\phi\). Let \(\RN_X\) deonote the function \(\RN\) appearing in Definition~\ref{def:prop-corr}. Define \(\RN_Y(x,y)=\RN_X\circ \phi\inverse(x,y) \,\frac{\dd\phi_*(\lambda)_{s_X((x))}}{\dd\mu_{s_Y(x)}}(y)\).

Let \(f\in \Contc( G\times_{\base}Y,U)\) where \(U\in \mathcal{A}'\), and let \(f'\) be an extension of it in \(\Ext_{\Mlp_Y}(f)\). Observe that \(f'\circ\phi'\) is an extension of \(f\circ \phi'\). Now
\begin{align*}
 & \int_Y f'(x,y)\,\RN_Y(x,y)\;\dd\lambda_{s_Y(x)}(y)\\
 &=\int_Y f'(x,y)\,\RN_X\circ\phi\inverse(x,y)\frac{\dd\phi_*(\lambda)_{s_X((x))}}{\dd\mu_{s_Y(x)}}(y)\;\dd\lambda_{s_Y(x)}(y)\\
&= \int_X f'\circ\phi' (a,b)\,\RN_Y(a,b)\;\dd\mu_{s_X(a)}(b)
\end{align*}
Since \((X,\lambda)\) is proper correspondence, using Equation~\eqref{eq:prop-corr} we see that the last term equals
\[
\int_G f\circ \phi' (\gamma, \gamma\inverse a)\,\dd\alpha^{r_X(a)}(\gamma)=\int_G f(\gamma, \gamma\inverse x)\,\dd\alpha^{r_Y(x)}(\gamma).
\]
The last equailty is due to the fact that \(\phi\) is a \(G\)\nb-equivariant isomorphism. This proves that \((Y,\mu)\) is proper.
\end{proof}

\section{Examples}
\label{sec:exa}

\begin{example}
  \label{exa:Tu}
In~\cite{Tu2004NonHausdorff-gpd-proper-actions-and-K}, Tu defines \emph{locally proper generalised morphism} and \emph{proper} locally proper generalised morphism from a locally compact groupoid \(G\) to another one, say, \(H\). This morphism is a locally compact \(G\)\nb-\(H\)-bispace \(X\) satisfying certain properties. 
We repeat Tu's definition of a locally proper generalised morphism and proper locally proper generalised morphism when \(X\) is Hausdorff; we modify the verbalisation to suit our setting, for the original definitions see~\cite{Tu2004NonHausdorff-gpd-proper-actions-and-K}*{Definitions 7.3 and 7.6}.
\begin{definition}[\cite{Tu2004NonHausdorff-gpd-proper-actions-and-K}*{Definitions 7.3 and 7.6} when \(X\) is Hausdorff]
\label{def:Tu-corr}
    A locally proper generalised morphism from a locally compact groupoid with Haar system $(G,\alpha)$ to a groupoid with Haar system $(H,\beta)$ is a $G$-$H$\nb-bispace $X$ such that
    \begin{enumerate}[label=\roman*)]
    \item the action of $G$ is proper and free,
    \item the right momentum map induces a homeomorphism \(G\7X\to \base[H]\)
    \item the action of $H$ is proper.
    \end{enumerate}
Furthermore, the locally proper generalised morphism \(X\) is called proper if
\begin{enumerate}[label=\roman*), resume]
\item \([r_X]\colon X/H\to\base\) is a proper map.
\end{enumerate}
 \end{definition}
\cite{Tu2004NonHausdorff-gpd-proper-actions-and-K}*{Theorem 7.8} proves that a proper correspondence \(X\) from \((G,\alpha)\) to \((H,\beta)\) produces a proper \(\Cst\)\nb-correspondence from \(\Cred(G,\alpha)\) to \(\Cred(H,\beta)\). The discussion on page 219 in the sixth paragraph in~\cite{Holkar2017Construction-of-Corr} and Example 3.8 in the same article show that a locally proper generalised morphism is a topological correspondence; this example also shows that the \(H\)\nb-invariant family of measures \(\lambda\) on \(X\) is given by
\begin{equation}
  \label{eq:Tu-corr-measures}
  \int_X f\,\dd\lambda_u\defeq \int_G f(\gamma\inverse x)\,\dd\alpha^{r_X(x)}(\gamma)
\end{equation}
where \(f\in \Contc(X)\), \(u\in \base[H]\) and \(x\in s_X\inverse(u)\). The same computation shows that \(\lambda_u\) is well-defined and is independent of the choice of \(y\).

Let \(X\) be a proper locally proper generalised morphism from a locally compact groupoid with Haar system \((G,\alpha)\) to \((H,\beta)\). We show that when \(X\) is Hausdorff and \(X/H\) is paracompact, \((X,\lambda)\) is a proper topological correspondence; here \(\lambda\) is the family of measures discussed above.

In this case, Remark~\ref{rem:relation-between-sX-quo-sX-Mlp} shows that the map \(\Mlp_{s_X}\) is a homeomorphism. Hence the first condition in Definition~\ref{def:prop-corr} is satisfied.

Condition~{(iv)} in Definition~\ref{def:Tu-corr} above agres with Definition~\ref{def:prop-corr}{(ii)}.

 Finally, to see that Definition~\ref{def:prop-corr}{(iii)} is satisfied, let \(\Cov\{G\times_{\base}X\}\) be the open cover of \(G\times_{\base}X\), and \(\Cov'=\{X \times_{s_X,\base,s_X}X\}\) be its extension via \(\Mlp_{s_X}\); recall from Remark~\ref{rem:relation-between-sX-quo-sX-Mlp} that in present situation \(\Mlp_{s_X}\) is a homeomorphism. Let \(f\in \Contc(G\times_{\base}X)\), then \(\{f\circ \Mlp_{S_X}\inverse\}= \Ext_{\Mlp_{s_X}}(f)\). Take \(\RN\) to be the constant function 1 on \(X\times_{s_X,\base[H],s_X}X\). We compute the left and right side terms in Equation~\eqref{eq:prop-corr}:
 \[
   \int_{X} f'(x,y)\;\RN(x,y)\;\dd\lambda_{s_X(x)}(y)=\int_{X} f\circ\Mlp_{s_X}\inverse(x,y)\;\dd\lambda_{s_X(x)}(y).
\]
Since \(\Mlp_{s_X}\) is a homeomorphism, there is unique element \(\gamma_{(x,y)}\in G\) such that \(\gamma_{(x,y)} y=x\), that is \(\gamma_{(x,y)}\inverse x=y\); we remove the suffix of \(\gamma\) for the sake of simplicity. Now we may write the right side term of the previous equation as
\[
\int_{X} f\circ\Mlp_{s_X}\inverse(x,\gamma\inverse x)\;\dd\lambda_{s_X(x)}(\gamma\inverse x).
\]
Using the definition of the measure \(\lambda_{s_X(x)}\) in Equation~\eqref{eq:Tu-corr-measures} we write the right side term in the last equation as
\[
\int_{X} f\circ\Mlp_{s_X}\inverse(x,\eta\inverse \gamma\inverse x)\;\dd\alpha_{s_G(\gamma)}(\eta).
\]
Now use Equation~\ref{eq:loc-sec-inv-x} to see \(f\circ\Mlp_{s_X}\inverse(x,\eta\inverse\gamma\inverse x)=f(\gamma\eta,\eta\inverse \gamma\inverse x)\). Put this value of \(f\circ\Mlp_{s_X}\inverse\) in the last integral, then it becomes
 \[
\int_{X} f(\gamma\eta,\eta\inverse \gamma\inverse x)\;\dd\alpha_{s_G(\gamma)}(\eta).
\]
Now change the variable \(\gamma\eta\mapsto \gamma\) and use the left invariance of the the Haar system \(\alpha\) to see that the above term equals
 \[
\int_{X} f(\gamma,\gamma\inverse x)\;\dd\alpha_{r_X(x)}(\gamma)
\]
which is the right hand side of Equation~\eqref{eq:prop-corr}.
\end{example}

\begin{example}(Groupoid equivalence)
  \label{exa:gpd-equi}
Recall the definition of groupoid equivalence~\cite{Muhly-Renault-Williams1987Gpd-equivalence}*{Definition 2.1}. Let \(X\) be a groupoid equivalence between locally compact groupoids \(G\) and \(H\). Let \(\alpha\) and \(\beta\) be the Haar systems on \(G\) and \(H\), respectively. Then \(X\) is a proper locally proper generalised morphism; a straightforward comparison between~\cite{Muhly-Renault-Williams1987Gpd-equivalence}{Definition 2.1} and Definition~\ref{def:Tu-corr}. Example~{3.9} shows this. Example~\ref{exa:Tu} now shows that when equipped with the family of measures \(\lambda\) in Equation~\ref{eq:Tu-corr-measures}, \((X,\lambda)\) is a proper correspondence. The function~\(\RN\) on \(X\times_{s_X,\base[H],s_X}X\) is the constant function 1.
\end{example}

\begin{example}
  \label{exa:cpt-gp-to-pt}
Let \(G\) be a locally compact group and \(\Pt\) the trivial group(oid). Let \(\alpha\) be the Haar system on \(G\), then \((G,\alpha)\) is a topological correspondence from \(G\) to \(\Pt\): \(G\) acts on itself with the left multiplication and \(\Pt\) acts on \(G\) trivially; \(\alpha\) is \((G,\alpha)\)\nb-quasi-invariant measure on \(G\). This is a special case of Example~\cite{Holkar2017Construction-of-Corr}*{Example 3.5} with the measure on the bisapce inverted; the map is the zero map \(\Pt\to G\). Assume that \(G\) is compact, then \((G,\alpha)\) is a proper correspondence. In this case, the first two conditions in Definition~\ref{def:prop-corr} are clearly satisfied. Finally, observe that the map \(\Mlp_0\colon G\times G\to G\times G\) is an isomorphism. To see that the last condition in Definition~\ref{def:prop-corr} holds, the computation is similar to the one in Example~\ref{exa:Tu} with \(X\) is replaced by \(G\).

To elaborate the algebraic side of this example, the topological correspondence \((G,\alpha)\) from \(G\) to \(\Pt\) gives the left regular representation of  \(\Cst(G)\)  on \(\mathcal{L}^2(G,\alpha)\). Now Theorem~\ref{thm:prop-corr-main-thm} reproduces the basic result that in this  case the the left regaular representation takes values in \(\Comp(\mathcal{L}^2(G,\alpha))\).
\end{example}

\begin{example}
  \label{exa:transversal}
  Let \((G,\alpha)\) be a locally compact groupoid with a Haar system, and \(H\subseteq G\) a closed subgroupoid. Let \(\beta\) be a Haar system on \(H\). Let \(X\defeq \{\gamma\in G: r_G(\gamma)\in \base[H]\}\). Popularly, \(X\) is denoted by \(G^{\base[H]}\). Then \(H\) and \(G\) act on \(X\) from left and right, respectively, by multiplication. The momentum maps for these actions are \(r_X=r_{G}|_{X}\) and \(s_X=s_G|_X\). Both the actions are free, and as~\cite{Holkar2017Construction-of-Corr}*{Example 3.14} shows, the actions are proper. In this case, \(X\) is a topological correspondence from \(H\) to \(G\) where the family of measures is given by Equation~\eqref{eq:Tu-corr-measures}; note that in this case the map \([s_X]\colon H\7X\to \base\) is need not be surjective. This is a correspondence similar to the one in~\cite{Holkar2017Construction-of-Corr}*{Example 3.14} with the direction inverted.

Since the action of \(H\) on \(X\) is proper, the map \(\textup{m}\colon H\times_{\base[H]}X\to X\times X\), \((\eta,x)\mapsto (\eta x,x)\), is proper. As \(\textup{m}\inverse(X \times_{s_X,\base[H],s_X}  X)=H\times_{\base[H]}X\), 
  
Proposition 10.1.3(a) in~\cite{Bourbaki1966Topologoy-Part-1}*{Chapter 1}) implies that \(\Mlp_{s_X}\) is proper. Now the fact that \(\Mlp_{s_X}\) is one-to-one and proper, using~\cite{Bourbaki1966Topologoy-Part-1}*{Chapter 1, Proposition 10.1.2}, we conclude that \(\Mlp_{s_X}\) is a homeomorphism onto a closed subspace of \( X \times_{s_X,\base[H],s_X}  X\). This shows that the first condition in Definition~\ref{def:prop-corr} holds.

  We observe that \([r_X]\colon X/G\to\base[H] \) is the identity map. Thus the second condition in Definition~\ref{def:prop-corr} is  satisfied. The third condition can be checked as in Example~\ref{exa:Tu}.
\end{example}

\begin{example}
  \label{exa:gp-to-sgp}
  Let \(G\) be a locally comapct group and \(H\) a closed subgroup of it. Assume that \(G\) is \(H\)\nb-compact, that is, \(G/H\) is a compact space. Let \(\alpha\)  and \(\beta\) be the Haar measure on \(G\) and \(H\), respectively. Then with the left and right multiplication actions, \((G,\alpha\inverse)\) is a topological correspondence from \(G\) to \(H\). Reader may check that this is an example of correspondences in Definition~\ref{def:Tu-corr}.

  In this case, \(\Mlp_0\colon G\times G\to G\times G\) is the map \((\gamma,\gamma')\mapsto (\gamma\gamma',\gamma')\) which is a homeomorphism; this fulfils the first condition in Definition~\ref{def:prop-corr}. Since \(G/H\) is compact, the constant map \(G/H\to \Pt\) is proper which fulfils the second condition for a proper correspondences. A computation as in Example~\ref{exa:Tu} shows that the third condition in Definition~\ref{def:prop-corr} holds. Thus \((G,\alpha\inverse)\colon G\to H\)  is a proper correspondence. A concrete example of this case is when \(G=\Z\) and \(H=n\Z\) where \(n=1,2,\dots\).
\end{example}

\subsection{The case of spaces}
\label{sec:ex-space-case}

A locally compact space \(X\) is a locally compact groupoid; in this groupoid every arrow is a unit arrow and the source as well as the range maps equal \(\Id_X\) the identity map on \(X\). Equip the groupoid \(X\) with the Haar system \(\delta^X\defeq\{\delta^X_x\}_{x\in X}\) where \(\delta_y^X\) is the point mass at \(x\in X\). Now on, whenever we think of a space as a locally compact groupoid equipped with \emph{the} (or standard or obvious) Haar system, we shall mean this setting. Note that \(X\) is an {\etale} groupoid. Let \(Y\) be another space. Then the only possible action of \(X\) on \(Y\) is the trivial action, that is, there is a momentum map \(b\colon X\to Y\) and for \((x,y)\in X\times_{\Id_X,X,b}Y\), the action is \(x\cdot y=y\). The actions of \(X\) on \(Y\)  are in one-to-one correspondence with the continuous maps from \(X\) to \(Y\).

\begin{lemma}
  \label{lem:prop-quiver-labda-is-atomic}
Let \(Y\xleftarrow{b} X\xrightarrow[\lambda]{f} Z\) be a topological correspondence from a space \(Y\) to a space \(Z\). If \((X,\lambda)\) is proper in the sense of Definition~\ref{def:prop-corr}, then
\begin{enumerate}[i)]
\item for each \(z\in Z\), the measure \(\lambda_z\) is atomic,
\item for each \(z\in Z\), each atom in \(\lambda_z\) is nonzero and the function \(\RN\) appearing in Definition~\ref{def:prop-corr}{(3)} positive. Thus the family of measure \(\lambda\) has full support.
\end{enumerate}
\end{lemma}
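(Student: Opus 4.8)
The plan is to unwind Definition~\ref{def:prop-corr} in the present situation and then read off atomicity and positivity from the concentration results of Section~\ref{sec:measure-th}. Since \(Y\) and \(Z\) are spaces regarded as groupoids with point-mass Haar systems, the left action of \(Y\) on \(X\) is the trivial action with momentum map \(r_X=b\); hence \(G\times_{\base}X\homeo X\) via \((b(x),x)\leftrightarrow x\), and under this identification \(\Mlp_{s_X}\) is exactly the diagonal embedding \(\Dia_X\colon X\to X\times_{s_X,\base[H],s_X}X\), \(x\mapsto(x,x)\), where \(s_X=f\). In particular \(\Mlp_{s_X}\) is a homeomorphism onto its image, so the openness requirement in Definition~\ref{def:prop-corr}{(i)} is automatic, and Proposition~\ref{prop:etale-gpd-loc-free-act} supplies local strong local freeness for free; condition {(ii)} merely says \(b\) is proper (as \(X/H\homeo X\)). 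Because \(\alpha^{r_X(x)}\) is the point mass at \(b(x)\) and \(G^{b(x)}=\{b(x)\}\), the right-hand side of Equation~\eqref{eq:prop-corr} collapses to the value at \(x\) of the test function, regarded as a function on \(X\). Thus properness reduces to the identity
\[
\int_X h'(x,y)\,\RN(x,y)\,\dd\lambda_{s_X(x)}(y)=h(x)
\]
holding for every \(x\in X\), every \(U\in\Cov\) with \(h\in\Contc(X,U)\), and every admissible extension \(h'\in\Contc(X\times_{s_X,\base[H],s_X}X,U')\).

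For the positivity statement {(ii)} I would fix \(x_0\in X\) and set \(z_0=s_X(x_0)\). By Example~\ref{exa:equi-rel-gpd} the groupoid measure is \(\lambda_1^{x_0}=\delta_{x_0}\otimes\lambda_{z_0}\), living on \(\{x_0\}\times X_{z_0}\), so that \(\RN(x_0,\_)\lambda_1^{x_0}\) corresponds to the measure \(\nu_{x_0}\defeq\RN(x_0,\_)\lambda_{z_0}\) on the fibre \(X_{z_0}\). The discussion following Definition~\ref{def:prop-corr} (which feeds Equation~\eqref{eq:prop-corr} into Lemma~\ref{lem:pulling-and-pasting-measures-3}) tells us that \(\RN(x_0,\_)\lambda_1^{x_0}|_{U'}\) is concentrated on \(\Mlp_{s_X}(U)=\Dia_X(U)\); intersecting with \(\{x_0\}\times X_{z_0}\) leaves only the point \((x_0,x_0)\), so by Lemma~\ref{lem:conc-measu-equi}{(v)} the measure \(\nu_{x_0}\) carries no mass on the \(U'\)-slice of \(x_0\) away from \(x_0\). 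Choosing \(h\in\Contc(X,U)\) with \(h(x_0)=1\) and an extension \(h'\) with \(h'(x_0,x_0)=1\), the reduced identity then reads \(1=h'(x_0,x_0)\,\nu_{x_0}(\{x_0\})=\RN(x_0,x_0)\,\lambda_{z_0}(\{x_0\})\). Hence \(\RN(x_0,x_0)>0\) and \(\lambda_{z_0}(\{x_0\})>0\); as \(x_0\) is arbitrary this shows that \(\RN\) is positive on the diagonal, that every point of every fibre is a nonzero atom, and therefore that \(\lambda\) has full support.

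Atomicity {(i)} then follows by proving each fibre discrete. If \(x_0\) were not isolated in \(X_{z_0}\), pick \(y_n\to x_0\) with \(y_n\ne x_0\) eventually lying in the \(U'\)-slice; each \(y_n\) is a nonzero atom by the previous step, while concentration forces \(\nu_{x_0}(\{y_n\})=\RN(x_0,y_n)\,\lambda_{z_0}(\{y_n\})=0\), whence \(\RN(x_0,y_n)=0\). Continuity of \(\RN\) together with \(y_n\to x_0\) would give \(\RN(x_0,x_0)=0\), contradicting diagonal positivity. So \(X_{z_0}\) is discrete, and \(\lambda_{z_0}\), being a Radon measure on a discrete space with nonzero mass at each point, is a sum of point masses, i.e.\ atomic. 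This discreteness also explains why a sufficiently thin extension \(U'\) can omit the off-diagonal fibre points, so that no constraint is imposed on \(\RN\) away from the diagonal, consistent with the usual \(\RN\equiv1\) examples.

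I expect the main obstacle to be the bookkeeping of the reduction step: identifying all the induced groupoid structures, checking that the right-hand integral genuinely collapses to the test-function value, and---most delicately---using the merely \emph{local} concentration furnished by Lemma~\ref{lem:pulling-and-pasting-measures-3} (it controls \(\nu_{x_0}\) only on the slice \(U'\cap(\{x_0\}\times X_{z_0})\)) to draw the pointwise conclusions at \(x_0\). The continuity argument for discreteness is the crux that converts diagonal positivity of \(\RN\) into isolatedness of the atoms, thereby recovering the characterisation of proper topological quivers in~\cite{Muhly-Tomforde-2005-Topological-quivers}*{Theorem 3.11}.
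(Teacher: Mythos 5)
Your proof is essentially correct, and for part (ii) it is the paper's argument carried out with more care: you reduce Equation~\eqref{eq:prop-corr} using the point-mass Haar system on \(Y\) so that the right-hand side becomes \(h(x_0)\), and you localise the left-hand side at \((x_0,x_0)\) by the local concentration statement, Lemma~\ref{lem:pulling-and-pasting-measures-3}(i) (as quoted in the remarks after Definition~\ref{def:prop-corr}); this yields \(\RN(x_0,x_0)\,\lambda_{z_0}(\{x_0\})=h(x_0)>0\). The paper's part (ii) is the same computation, stated more loosely. For part (i), however, your route is genuinely different from the paper's, and it is the more robust one. The paper invokes the global statement, Lemma~\ref{lem:closed-image-of-Mlp-measure-conc} (closedness of the diagonal), to conclude that \(\lambda_1^x\) is concentrated on the diagonal, and then reads off \(\supp(\lambda_{f(x)})\subseteq\{x\}\) from Bourbaki's product-support formula. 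In doing so it drops the factor \(\RN\), and its intermediate conclusion is visibly too strong: for the counting-measure correspondence of Example~\ref{exa:proper-basic-corr} with a fibre containing two points, \(\supp(\tau^f_z)\) is the whole fibre, not a single point, even though that correspondence is proper. Your argument --- keep \(\RN\), observe that concentration only says \(\RN(x_0,\cdot)\lambda_{z_0}\) is carried by \(\{x_0\}\) on the \(U'\)-slice, then combine positivity of \(\RN\) on the diagonal with positivity of the atoms to force each fibre to be discrete --- is exactly the repaired form of that step, and it is the form that stays consistent with the paper's own examples.

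The one step you must fix is the extraction of a sequence \(y_n\to x_0\) with \(y_n\neq x_0\). The standing hypotheses are only local compactness and Hausdorffness; no first countability is assumed, and in a non-first-countable space a non-isolated point need not be a sequential limit of other points (consider \(\omega_1\) in \([0,\omega_1]\)). Either rerun the argument with nets (continuity of \(\RN\) passes to limits of nets just as well), or avoid limits altogether: by continuity and \(\RN(x_0,x_0)>0\) there is an open neighbourhood \(W\) of \((x_0,x_0)\) in \(X\times_{f,Z,f}X\) on which \(\RN>0\); for any fibre point \(y\neq x_0\) with \((x_0,y)\in W\cap U'\), concentration gives \(\RN(x_0,y)\,\lambda_{z_0}(\{y\})=0\) while your part (ii) gives \(\lambda_{z_0}(\{y\})>0\), so \(\RN(x_0,y)=0\), contradicting \(\RN>0\) on \(W\); hence the slice of \(W\cap U'\) meets the fibre only in \(x_0\), and \(x_0\) is isolated in \(f^{-1}(z_0)\). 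Finally, note that \(f^{-1}(z_0)\) is closed in \(X\) (as \(Z\) is Hausdorff), so a Radon measure carried by this closed discrete set gives mass to only finitely many points in each compact set and is therefore a locally finite sum of point masses. With these two touches your proof is complete.
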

\begin{proof}
i): The discussion regarding Figure~\ref{fig:quiver-homeo} in Example~\ref{exa:proper-basic-corr} shows that the image of \(\Mlp_f\) is the diagonal \(\{(x,x)\in X\times_{f,Z,f}X:x\in X\}\) which is closed in \(X\times_{f,Z,f}X\) (since \(X\) is Hausdorff). Fix \(x\in X\). Then using Lemma~\ref{lem:closed-image-of-Mlp-measure-conc} we see that the measure \(\lambda_1^x\) is concentrated on the diagonal in \(X\times_{f,Z,f}X\). Note that the measure \(\lambda_1^x\) is defined on \(\pi_2\inverse(x)\) where \(\pi_2\) is the projection onto the second factor of \(X\times_{f,Z,f}X\). Thus \(\lambda_1^x\) is concentrated on \(\pi_2\inverse(x)\cap\Dia_X(X)\) which equals
\[
\{(x,x')\in X\times_{f,Z,f}X:
  f(x)=f(x')\}\cap \{(x'',x'')\in X\times_{f,Z,f}X: x''\in X\}=\{(x,x)\}.
\]
Thus
\[
\supp(\lambda_1^x)\subseteq \{(x,x)\}=\{x\}\times\{x\}
\]
for each \(x\in X\).
Now \(\lambda_1^x=\delta_x\otimes\lambda_{f(x)}\) hence
 \[
\supp(\lambda_1^x)= \supp(\delta_x)\times \supp(\lambda_{f(x)})=\{x\}\times\supp(\lambda_{f(x)}),
\]
due to~\cite{Bourbaki2004Integration-I-EN}*{Chapter III, \S4.2, Proposition 2, page III.44}. Hence \(\supp(\lambda_{f(x)})\subseteq \{x\}\) for each \(x\in X\).

 \noindent ii): Let \(x_0\in X\) be given. Let \(\tilde g\in \Contc(Y\times_{\Id_Y,Y,b}X)\) be a function which is nonnegative and \(\tilde g(b(x_0),x_0)>0\). Let \(\RN\) be the function in the third condition of the proper correspondence. Then Equation~\ref{eq:quiver-right-eq} says that 
\[
\int_Y \tilde g(y,y\inverse\cdot x)\,\dd\delta_Y^{b(x_0)}= \tilde g(b(x_0),x_0)>0.
\]
Let \(g'\in \Ext_f(\tilde g)\) be a nonnegative extension of \(\tilde g\). Since the third condition in Definition~\ref{def:prop-corr} holds, we get
\[
\int_X g'(x_0,x)\,\RN(x_0,x')\dd\lambda_{f(x_0)}(x)=\int_Y \tilde g(y,y\inverse\cdot x)\,\dd\delta_Y^{b(x_0)}= \tilde g(b(x_0),x_0)>0.
\]
Thus the measure \(\lambda_{f(x_0)}\neq 0\), that is, \(\lambda_z>0\) for each each \(z\in Z\). Furthermore, since the function \(g'\)  and the measure \(\lambda_{z}\) in the left hand side of the above equation are nonnegative, and the whole term is positive, we get \(\RN(x_0,x')>0\).
\end{proof}

Let \(X\) and \(Y\) be spaces and \(f\colon X\to Y\) be a (surjective) local homeomorphism. Then there is a canonical continuous family of measure along \(f\) which we denote by \(\tau^\phi\):
\begin{equation}
\label{eq:etale-measure-family}
\int_X g\,\tau^\phi_y=\sum_{x\in \supp(g)\cap f\inverse(y)}g(x)
\end{equation}
where \(g\in \Contc(X)\) and \(y\in Y\).

\begin{example}
  \label{exa:proper-basic-corr}
Let \(X,Y\) and \(Z\) be locally compact Hausdorff spaces, and  \(Y\xleftarrow{b} X\xrightarrow{f} Z\) continuous maps. Assume that \(f\) is a local homeomorphism. Then \((X.\tau^f)\) is a topological correspondence from the {\etale} groupoid \(X\) to \(Y\). The adjoining function of this correspondence is the constant function 1; this follows directly of one writes the Equation in Definition~{2.1~(iv)} in~\cite{Holkar2017Construction-of-Corr}, and keep in mind that the actions are trivial.

Additionally, assume that \(b\) is a proper map. Then \((X,\lambda)\) is a proper correspondence. Following are the details: in this case, \(Y\times_{Y}X=Y\times_{\Id_Y,Y,b}X\homeo X\). Hence \(\Mlp_f\colon Y\times_{Y}X\to X\times_{f,Z,f}X\) can be identified with the diagonal embedding (see Figure~\ref{fig:quiver-homeo}), thus the first condition in Definition~\ref{def:prop-corr} is satisfied. Since \(b\) is proper, and the action of \(Z\) on \(X\) is the trivial action, the second condition is also fulfilled. To see that the third condition is also satisfied, one identifies \(Y\times_{Y}X\) with \(X\), \(\Mlp_f\) with the diagonal embedding \(X\hookrightarrow X\times_{f,Z,f}X\), and use this embedding to prove the required claim.

 Let \(\theta\colon Y\times_{Y}X\to X\) be the the homeomorphism \(\theta\colon (b(x),x)\mapsto x\); inverse of \(\theta\) is \(x\mapsto (b(x),x)\). Let \(\Dia_X\colon X\to X\times_{f,Z,f}X\) be the diagonal embedding. We draw the commutative triangle in Figure~\ref{fig:quiver-homeo}.
\begin{figure}[htb]
  \centering
\[
\begin{tikzcd}[scale=2]
Y\times_{Y}X\arrow{r}{\Mlp_f}\arrow{d}{\homeo}[swap]{\theta}
 & X\times_{f,Z,f}X\\
 X\arrow{ru}[swap]{\Dia_X} &
 \end{tikzcd}
\]
\caption{}
\label{fig:quiver-homeo}
\end{figure}

Take the cover \(\Cov\) of \(Y\times_{Y}X\) to be 
\(
\{\theta\inverse(U): U\in f^{\HoC}\}.
\)
Let \(\Cov'\defeq\{U\times_{f,Z,f}U: U\in f^{\HoC}\}\). We observe that \(\Cov'\) is an extension of \(f^{\HoC}\) via \(\Dia_X\), and hence it is an extension of \(\Cov\) via \(\Mlp_f\). Note that for \(U',U\subseteq X\), \((U\times_{f,Z,f}U) \cap (X\times_{f,Z,f}X)=U'\) if and only if \(U=U'\). Thus every element in \(f^{\HoC}\) or \(\Cov\) has a unique extension in \(\Cov'\). Let \(\RN\) be the constant function 1. 

Let \(\theta\inverse(U)\in\Cov\) be given where \(U\in f^{\HoC}\). Let \(\tilde{g}\in \Contc(Y\times_{Y}X,\theta\inverse(U))\); then \(g\defeq \tilde{g}\circ \theta\inverse\in \Contc(X;U)\). Let \(g'\in \Contc(X\times_{f,Z,f}X; U\times_{f,Z,f} U)\) be an extension of \(g\) via \(\Dia_X\).Then \(g'\) is an extension of \(\tilde{g}\) also. For \(x\in X\), the right side of Equation~\eqref{eq:prop-corr} for \(\tilde{g}\) is
 \begin{equation}
\label{eq:quiver-right-eq}
\int_{Y} \tilde{g}(y,y\inverse\cdot x)\,\dd\delta_Y^{b(x)}(y)=\int_{Y} \tilde{g}(y,x)\,\dd\delta_Y^{b(x)}(y)= \tilde{g}(b(x), x).
\end{equation}

 For the same \(x\in X\) as above, the left side of Equation~\eqref{eq:prop-corr} for \(g'\) reads
\[
\int_{X} g'(x,x')\,\dd\tau^f_{f(x)}(x').
\]
note that \(g'\) is supported in \(U\times_{f,Z,f}U\), and \(f|_U\) is a homeomorphism. Therefore, the only element in \(U\) which hits \(f(x)\) under \(f\) is \(x\) itself. Now recall that \(g'\) is an extension of \(\tilde{g}\) via \(\Mlp_f\), hence \(g'\circ\Dia_X=\tilde{g}\). Thus
\[
\int_{X} g'(x,x')\,\dd\tau^f_{f(x)}(x')=g'(x,x)=g'\circ\Mlp_f(b(x),x)=\tilde{g}(b(x),x);
\]
the claim follows by comparing the above equation with Equation~\eqref{eq:quiver-right-eq}.
\end{example}

\begin{example}
\label{exa:prop-quiver}
  Let \(Y\) and \(Z\) be spaces considered as locally compact groupoids equipped with the canonical Haar system. 
let \((X,\lambda)\) be a topological correspondence from \(y\) to \(z\), see~\cite{Holkar2017Construction-of-Corr}*{example 3.3}. thus there are momentum maps \(Y\xleftarrow{b} X\xrightarrow{f} Z\) and a continuous family of measures \(\lambda\) along \(f\).  The topological correspondence \((X,\lambda)\) gives a \(\Cst\)\nb-correspondence \(\Hilm(X)\) from \(\Contz(Y)\) to \(\Contz(Z)\). We use the shorthand notation \(Y\xleftarrow{b} X\xrightarrow[\lambda]{f} Z\) to write this correspondence.

When the spaces \(Y\) and \(Z\) above are same and the family of measures \(\lambda\) has full support, Muhly and Tomford call this topological correspondence a \emph{topological quiver}, see~\cite{Muhly-Tomforde-2005-Topological-quivers}*{Definition 3.1}. They call the quiver \((X,\lambda)\) proper if \(b\) is a proper map, and \(f\) is a local homeomorphism. \cite{Muhly-Tomforde-2005-Topological-quivers}*{Theorem 3.11} asserts that the \(\Cst\)\nb-correspondence \(\Hilm(X)\) from \(\Contz(Y)\) to itself is proper if and only if the quiver \((X,\lambda)\) is proper. One can readily see that that  \cite{Muhly-Tomforde-2005-Topological-quivers}*{Theorem 3.11} is valid for a topological correspondence of spaces if the family of measures on the middle space has full support.
\medskip

Returning to our example of the topological correspondences \(Y\xleftarrow{b} X\xrightarrow[\lambda]{f} Z\), assume that \(\lambda\) has full support, \(f\) is a local homeomorphism and \(b\) is proper. Then Example~\ref{exa:proper-basic-corr} shows that \((X,\tau^f)\) is a proper correspondence from \(Y\) to \(Z\). We show that the topological correspondences \((X,\lambda)\) and \((X,\tau^f)\) are isomorphic. Then Proposition~\ref{prop:prop-iso-corr} implies that \((X,\lambda)\) is proper.

 We claim that the identity map, \(\Id_X\colon X\to X\), gives the isomorphism  \((X,\lambda)\to (X,\tau^f)\) of correspondences. Define 
\[
D\colon X \to \R, \quad D(x)= \lambda_{f(x)}(x)\;.
\]
Then \(D\) is positive since \(\lambda\) has full support and continuous since \(\lambda\) is continuous. We note that
\[
D(x')=\frac{\dd\lambda_{f(x)}}{\dd\delta_{f(x)}}(x')
\]
where the latter term is the Radon-Nikodym derivative of \(\lambda_{f(x)}\) with respect to \(\delta_{f(x)}={(\Id_X)}_*(\delta_{f(x)})\). This implies that  \({(\Id_X)}_*(\delta_{z})\sim \lambda_{z}\) for every \(z\in Z\) and the Radon-Nikodym derivative is continuous. 
Thus \(\Id_X\) gives the isomorphism of the correspondences \((X,\lambda)\to (X,\tau^f)\), and the corresponding positive Radon-Nikodym derivative is given by the function \(D\) above.

The isomorphism of correspondences in this example generalises to {\etale} groupoids, see Proposition~\ref{prop:gen-etale-corr-iso}.
\end{example}
\medskip

\cite{Muhly-Tomforde-2005-Topological-quivers}*{Theorem 3.11} implies that if \(Y\xleftarrow{b} X\xrightarrow[\lambda]{f} Z\) is a topological correspondence, then the corresponding \(\Cst\)\nb-correspondence is proper if and only if the map \([r_X]\colon X/H\to \base\) is proper, and the right momentum map \(s_X\) is a local homeomorphism.

\begin{proposition}
  \label{prop:etale-corr-iso-kk-cycles}
Let \(X,Y\) and \(Z\) be spaces, \(Y\xleftarrow{b}X\xrightarrow{f}Z\) maps and \(\lambda\) and \(\lambda'\) two continuous families of measures along \(f\) having full support.  Assume that \(f\) is a local homeomorphism and \(b\) a proper map. Thus \((X,\lambda)\) and \((X,\lambda')\) are topological correspondences from \(Y\to Z\) (Example~\ref{exa:prop-quiver}).  Then in  \(\KK(\Contz(Y),\Contz(Z))\), \([(\Hilm(X)),0]=[(\Hilm(X')),0]\).
\end{proposition}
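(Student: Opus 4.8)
The plan is to reduce both families of measures to the canonical counting family $\tau^f$ along the local homeomorphism $f$, and then to use that isomorphic topological correspondences induce unitarily equivalent $\Cst$\nb-correspondences, which represent the same $\KK$\nb-class. Throughout I write $\Hilm(X,\lambda)$ and $\Hilm(X,\lambda')$ for the two $\Cst$\nb-correspondences in the statement.

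First I would observe that, since $f$ is a local homeomorphism, $b$ is proper and both $\lambda,\lambda'$ have full support, Example~\ref{exa:prop-quiver} applies to each of $(X,\lambda)$ and $(X,\lambda')$. It produces isomorphisms of topological correspondences $(X,\lambda)\iso(X,\tau^f)$ and $(X,\lambda')\iso(X,\tau^f)$, both implemented by the identity map $\Id_X$, with strictly positive continuous Radon\nb-Nikodym derivatives $D_\lambda(x)=\lambda_{f(x)}(\{x\})$ and $D_{\lambda'}(x)=\lambda'_{f(x)}(\{x\})$. Composing the first with the inverse of the second gives an isomorphism $(X,\lambda)\iso(X,\lambda')$, again implemented by $\Id_X$, whose Radon\nb-Nikodym derivative is the positive continuous function $D\defeq D_\lambda/D_{\lambda'}=\dd\lambda/\dd\lambda'$ (taken fibrewise over $Z$).

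Next I would promote this to the level of Hilbert modules. Recall from \cite{mythesis}*{Section 2.5.2} (see also \cite{Holkar2017Construction-of-Corr}) that an isomorphism of correspondences with positive continuous Radon\nb-Nikodym derivative $D$ induces a unitary $U\colon\Hilm(X,\lambda)\to\Hilm(X,\lambda')$ of Hilbert $\Contz(Z)$\nb-modules, given on $\Contc(X)$ by multiplication by $D^{1/2}$; here multiplication is $\Contz(Z)$\nb-linear and, because the left action of $\Contz(Y)$ on $\Contc(X)$ is itself by the multiplication operators $\zeta\mapsto (g\circ b)\,\zeta$, it commutes with $U$ and hence intertwines the two left representations. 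By Example~\ref{exa:prop-quiver} both correspondences are proper, so each defines the Kasparov cycle with the zero operator (Corollary~\ref{cor:prop-corr-gives-kk-cycle}); the unitary $U$ carries $(\Hilm(X,\lambda),0)$ onto $(\Hilm(X,\lambda'),0)$, and unitarily equivalent Kasparov cycles represent the same class. Therefore $[\Hilm(X,\lambda),0]=[\Hilm(X,\lambda'),0]$ in $\KK(\Contz(Y),\Contz(Z))$.

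The one point that needs care --- and the step I expect to be the main obstacle --- is checking that multiplication by $D^{1/2}$ genuinely extends to a surjective isometry of the module completions. On $\Contc(X)$ this comes down to the identity $\inpro{D^{1/2}\zeta}{D^{1/2}\xi}_{\lambda'}=\inpro{\zeta}{\xi}_{\lambda}$, which is exactly the defining property of $D=\dd\lambda/\dd\lambda'$ since for spaces the inner product is $\inpro{\zeta}{\xi}_{\lambda}(z)=\int_X\conj{\zeta}\,\xi\,\dd\lambda_z$; positivity and continuity of $D$ (established in Example~\ref{exa:prop-quiver}) make $D^{1/2}$ bounded with bounded inverse, so $U$ is unitary. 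As this is precisely the isomorphism\nb-invariance of the induced $\Cst$\nb-correspondence, I would cite it from \cite{mythesis} rather than reprove it.
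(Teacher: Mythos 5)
Your proposal is correct and follows essentially the same route as the paper: both reduce $(X,\lambda)$ and $(X,\lambda')$ to the canonical correspondence $(X,\tau^f)$ via Example~\ref{exa:prop-quiver} and then invoke isomorphism-invariance of the induced $\KK$\nb-class. The only difference is that you spell out the unitary (multiplication by $D^{1/2}$) implementing that invariance, which the paper leaves implicit.
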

\begin{proof}
Example~\ref{exa:prop-quiver} shows that \((X,\lambda)\) and \((X',\lambda')\) are isomorphic to \((X,\tau^f)\). Hence the required {\KK}-classes are same.
\end{proof}

 \subsection{The case of {\etale} groupoids}
 \label{sec:ex-egpd-case}

 Let \(G\) be an {\etale} groupoid. Assume that \(\base\) is locally compact Hausdorff subspace of \(G\) and that \(G\) is covered by countably many compact sets the interiors of which form a basis for the topology of \(G\). Then \(G\) is a locally compact groupoid in the sense of~\cite{PatersonA1999Gpd-InverseSemigps-Operator-Alg}*{Definition 2.2.1}. In this section, the term \emph{{\etale} groupoid} shall stand for groupoids which are {\etale} and fulfil the above conditions. In this case, \(G\) is a locally compact groupoid equipped with a Haar system, and it has a nice approximate identity: one writes \(\base\) as an increasing union of compact sets, \(K_n\) where \(n\in\N\), whose interiors cover \(\base\). Then using Urysohn's lemma, one gets functions \(u_n\in \Contc(\base)\) for each \( n\in\N\) such that each \(0\leq f_n\leq 1\) and \(f_n=1\) on \(K_n\). The sequence \(\{f_n\}_{n\in\N}\) is an approximate identity for the {\Star}algebra \(\Contc(G)\), for details see~\cite{PatersonA1999Gpd-InverseSemigps-Operator-Alg}*{pages 47--49}.

For an {\etale} groupoid \(G\), \(\Cst(\base)=\Contz(\base)\) is a subalgebra of \(\Cst(G)\), and the map \(\Contc(G)\to\Contc(\base)\), \(f\mapsto f|_{\base}\), of {\Star}algebras induces an expectation from \(\Cst(G)\to \Contz(\base)\), see~\cite{Renault1980Gpd-Cst-Alg}*{page 61} for details.
 
In this section, during computations the canonical Haar system on the {\etale} groupoid \(G\) shall be denoted by \(\alpha\) and the one on \(H\) by \(\beta\).

 Let \(G\) and \(H\) be {\etale} groupoids and \(X\) a \(G\)-\(H\)-bispace. Assume that the momentum map \(s_X\colon X\to \base[H]\) is a local homeomorphism. Let \(\tau^{s_X}\) be the family of point-mass measures along \(s_X\) (see Equation~\eqref{eq:etale-measure-family}). Assume that \((X,\tau^{s_X})\) is a topological correspondence, that is, the action of \(H\) is proper and each measure in \(\tau^{s_X}\) is \(G\)\nb-quasi-invariant. Then we call \(X\), or to be precise \((X,\tau_{s_X})\), an \emph{{\etale} correspondence} from \(G\) to \(H\). The adjoining function of this correspondence is the constant function 1. This is because for each \(u\in \base[H]\), the measure \(\tau_u^{s_X}\) on the space of units of the transformation groupoid \(G\ltimes X\) is quasi-invariant; here \(G\ltimes X\) is an {\etale} groupoid. And the adjoining function is the modular function of \(G\ltimes X\) for \(\tau^{s_X}_u\) which is constant function 1 due to~\cite{Renault1980Gpd-Cst-Alg}*{1.3.22}.

 \begin{proposition}
   \label{prop:gen-etale-corr-iso}
   Let \(G\) and \(H\) be {\etale} groupoids and \(X\) a \(G\)\nb-\(H\)-bispace. Assume that the right momentum map \(s_X\colon X\to \base[H]\) is a local homeomorphism. Let \(\lambda\) be a continuous family of measures with full support  along \(s_X\). Assume that \((X,\lambda)\) is a topological correspondence from \(G\) to \(H\). Let \(\tau^{s_X}\) be the family of measures consisting of point-masses along \(s_X\). Then \((X,\tau^{s_X})\) is a topological correspondence from \(G\) to \(H\) and the identity map \(\Id_X\colon X\to X\) induces an isomorphism of topological correspondences.
 \end{proposition}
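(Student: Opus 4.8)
The plan is to reproduce the argument of Example~\ref{exa:prop-quiver}, now carrying the two nontrivial groupoid actions through it. First I would check that \((X,\tau^{s_X})\) is genuinely a topological correspondence. The underlying \(G\)\nb-\(H\)-bispace and the properness of the \(H\)\nb-action are inherited verbatim from \((X,\lambda)\), so only the family \(\tau^{s_X}\) requires attention. Since \(s_X\) is a local homeomorphism which is \(H\)\nb-equivariant and \(H\) is {\etale}, the point-mass family of Equation~\eqref{eq:etale-measure-family} is an \(H\)\nb-invariant proper family along \(s_X\); this is precisely the {\etale}-correspondence set-up recorded immediately before the statement, where it is also noted that the adjoining function of \((X,\tau^{s_X})\) is the constant \(1\) by~\cite{Renault1980Gpd-Cst-Alg}*{1.3.22}. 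The only remaining condition, \((G,\alpha)\)\nb-quasi-invariance of each \(\tau^{s_X}_u\), I would obtain from the \((G,\alpha)\)\nb-quasi-invariance of \(\lambda_u\), since quasi-invariance is preserved under passing to an equivalent family of measures --- and the equivalence is established in the next step.

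The central computation is the equivalence \(\lambda_u\sim\tau^{s_X}_u\) together with a positive continuous density. Because \(s_X\) is a local homeomorphism, every fibre \(s_X\inverse(u)\) is a discrete subspace of \(X\); hence any measure along \(s_X\), in particular \(\lambda_u\), is atomic and supported on the fibre. I would set
\[
D(x)=\lambda_{s_X(x)}(\{x\}),\qquad x\in X.
\]
Full support of \(\lambda\) forces \(D(x)>0\) for every \(x\), and continuity of the family \(\lambda\) forces \(D\) to be continuous; on any open set \(U\) on which \(s_X|_U\) is a homeomorphism, \(D\) is exactly the density of \(\lambda\) against the counting family \(\tau^{s_X}\). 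A short computation using the right \(H\)\nb-invariance of \(\lambda\) also shows \(D(xh)=D(x)\), so \(D\) is \(H\)\nb-invariant. Thus \(D=\dd\lambda_u/\dd\tau^{s_X}_u\) fibrewise, and \(\lambda_u\sim\tau^{s_X}_u\) with positive continuous \(H\)\nb-invariant density \(D\); in particular quasi-invariance transfers as claimed in the first paragraph.

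With the equivalence in hand, I would verify that \(\Id_X\) satisfies the definition of an isomorphism of topological correspondences of Section~2.5.2 of~\cite{mythesis}. The map \(\Id_X\) is trivially \(G\)\nb-\(H\)-equivariant and \((\Id_X)_*\lambda_u=\lambda_u\), so the measure-theoretic datum of the isomorphism is exactly the density \(D\) produced above. The step I expect to be the main obstacle is matching the adjoining functions: \((X,\tau^{s_X})\) has adjoining function the constant \(1\), whereas \((X,\lambda)\) carries some adjoining function \(\Delta\), and the isomorphism condition demands that \(\Delta\) agree with the coboundary of \(D\) built from the \(G\)\nb-action, of the form \((\gamma,x)\mapsto D(\gamma\inverse x)/D(x)\) up to the normalisation fixed by the definition. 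Establishing this requires the precise transformation law for adjoining functions under a change of the family of measures by a Radon-Nikodym cocycle; once that identity is checked against the definition of \(D\), the isomorphism of correspondences follows, and, exactly as in Example~\ref{exa:prop-quiver}, the induced \(\KK\)\nb-classes coincide.
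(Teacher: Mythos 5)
Your proposal follows essentially the same route as the paper's own proof: both define the density \(D(x)=\lambda_{s_X(x)}(\{x\})\), use full support and continuity of \(\lambda\) to get \(D\) positive and continuous, use \(H\)\nb-invariance of \(\lambda\) to get \(H\)\nb-invariance of \(D\), conclude \(\lambda_u\sim\tau^{s_X}_u\) fibrewise, and transfer \(G\)\nb-quasi-invariance through this equivalence (the paper cites~\cite{Renault1980Gpd-Cst-Alg}*{Proposition 1.3.3(ii)} for exactly the fact you invoke). The adjoining-function matching that you flag as the main obstacle is not checked in the paper either, and is in fact automatic: the adjoining function is itself a Radon--Nikodym derivative determined by \(\lambda\) and \(\alpha\), so once the measure equivalence with continuous positive \(H\)\nb-invariant density is established, the required coboundary relation \(\Delta(\gamma,x)=D(\gamma x)/D(x)\) holds by construction rather than as a separate hypothesis to verify.
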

 \begin{proof}
   Since the family of measures \(\lambda\) is continuous and has full support, the function
   \[
D\colon X\to \R^+\quad D\colon x\mapsto \lambda(x)
\]
is continuous. Since \(\lambda\) is \(H\)\nb-invariant, so is \(D\). Then
\[
  D\colon \Contc(X)\to \Contc(X) \quad \colon f\mapsto fD
\]
is an isomorphism of complex vector spaces.

Now we note that for \(f\in\Contc(X)\) and every \(u\in \base[H]\),

\begin{equation}
  \label{eq:gen-etale-corr-iso}
\lambda(f)=\tau^{s_X}(f\lambda(x))(u)=\tau^{s_X}(fD)(u).
\end{equation}

Thus \(\lambda_u\sim\tau^{s_X}_u\) for every \(u\in\base[H]\), and the Radon-Nikodym derivative \(\frac{\lambda_u}{\tau^{s_X}_u}(x)=\lambda(x)\) is continuous. Now it follows from~\cite{Renault1980Gpd-Cst-Alg}*{Proposition 1.3.3(ii)} that \(\tau^{s_X}_u\) is \(G\ltimes X\)-quasi-invariant for each \(u\in\base[H]\). This shows that \((X,\tau^{s_X})\) is a topological correspondence from \(G\) to \(H\). Since \(\lambda_u\sim\tau^{s_X}_u\), which follows from Equation~\eqref{eq:gen-etale-corr-iso}, we have also proved that the identity map \(X\to X\) is an isomorphism of the topological correspondences \((X,\lambda)\) and \((X,\tau^{s_X})\).
\end{proof}
Converse of Proposition~\ref{prop:gen-etale-corr-iso} clearly holds. Let \(G\)  and \(H\) be {\etale} groupoids, \(X\) a \(G\)\nb-\(H\)-bispace and \(s_X\colon X\to \base[H]\) a local homeomorphism such that \((X,\tau^{s_X})\colon G\to H\) is a  topological correspondences. If \(D\colon X\to \R\) is an \(H\)\nb-invariant positive continuous function, then \(D\tau^{s_X}\defeq\{D\tau^{s_X}_u\}_{u\in\base[H]}\) is an \(H\)\nb-invariant family of measures, and since each measure \(D\tau^{s_X}_u\) in this family is equivalent to \(\tau^{s_X}_u\),~\cite{Renault1980Gpd-Cst-Alg}*{Proposition 1.3.3 (ii)} implies that \(D\tau^{s_X}_u\) is \(G\ltimes X\)\nb-quasi-invariant measure on its space of units.
\smallskip

Let \(X\) be a \(G\)\nb-\(H\)-bispace and let \(s_X\colon X\to \base[H]\) be a local homeomorphism. Assume that \((X,\tau^{s_X})\colon G\to H\) is a topological correspondence. Then, Proposition~\ref{prop:etale-gpd-loc-free-act} implies that, this correspondence is proper if \(\Mlp_{s_X}\colon G\times_{\base} X\to X\times_{s_X,\base[H],s_X} X\) is open and the rest two conditions in Definition~\ref{def:prop-corr} hold. Due to Proposition~\ref{prop:prop-iso-corr}, the correspondence \((X,\lambda)\) in Proposition~\ref{prop:gen-etale-corr-iso} above is proper if and only if \((X,\tau^{s_X})\) is proper. Thus for a correspondence \((X,\lambda)\) as in Proposition~\ref{prop:gen-etale-corr-iso}, the space \(X\) determines the correspondence upto isomorphism of correspondences.

Clearly, the correspondences of spaces in  Examples~\ref{exa:proper-basic-corr} and~\ref{exa:prop-quiver} are {\etale} correspondences. Following are a few more examples of {\etale} correspondences.

\begin{example}
  \label{exa:space-to-gpd-etale-corr}
  Let \(Y\) be a space considered as groupoid, \(H\) an {\etale} groupoid and \(X\) a \(Y\)\nb-\(H\)-bispace. Let \(s_X\colon X\to\base[H]\) be a local homeomorphism. Then \((X,\tau_{s_X})\colon Y\to H\) is a topological correspondence. This correspondence is proper if the right momentum map \(r_X\colon X\to Y\) is proper.

To confirm that \((X,\tau_{s_X})\) is a topological correspondence, one needs to check that \(\tau_{s_X}^u\) is \(Y\)\nb-quasi-invariant. This follows directly if one substitutes appropriate values in the required equation in~\cite{Holkar2017Construction-of-Corr}*{Definition 2.1}.
  
Now assume that \(r_X\) is proper. To see that this correspondence is proper, we need to check that Definition~\ref{def:prop-corr}{(iii)} holds as other conditions obviously hold. The map \(\Mlp_{s_X}\colon Y\times_{\Id_Y,Y,r_X} X\colon X\times_{s_X,\base[H],s_X}X\) is \((f(x),x)\mapsto (x,x)\), and checking the necessary condition is same as in Example~\ref{exa:proper-basic-corr}. Thus the proof follows from the same computation as in Example~\ref{exa:proper-basic-corr}. This examples ends here.
\end{example}

Let \(G\) and \(H\) be {\etale} groupoids and \(X\) an {\etale} correspondence from \(G\) to \(H\). Then \(\Hilm(X)\) is the Hilbert \(\Cst(H)\)\nb-module; let \(\sigma_G\colon \Cst(G)\to\Cst(H)\) be the representation which gives the \(\Cst\)\nb-correspondence. One may restrict the codomain of \(r_X\) to \(\base\) and consider \(X\) as an {\etale} correspondence from \(\base\) to \(H\); we call this correspondence as the one obtained from \(X\colon G\to H\) by restricting to \(\base\).  Note that in this  case, \(\Hilm(X)\) is the Hilbert \(\Cst(H)\)\nb-module. And since \(\Contz(\base)\subseteq \Cst(G)\) is a \(\Cst\)\nb-subalgebra,  the representation \(\sigma_{\base[G]}\colon \Cst(G)\to \Hilm(X)\) is obtained by restricting \(\sigma_G\) to \(\Contz(\base[G])\). Thus 
\begin{equation}
  \label{eq:space-to-e-gpd-corr}
\left\{\begin{aligned}
     X&\colon G\to H &\text{gives } \quad (\Hilm(X),\sigma_G)&\colon \Cst(G)\to \Cst(H),\\
     X&\colon \base\to H &\text{gives }\quad (\Hilm(X), \sigma_{\base})&\colon \Contz(\base)\to \Cst(H).
   \end{aligned}\right.
    \end{equation}

\begin{proposition}\label{prop:prop-etale-cor-char}
  Let \(G\) and \(H\) be {\etale} groupoids and \(X\) an {\etale} correspondence from \(G\) to \(H\). Then the following hold:
  \begin{enumerate}[i)]
  \item The \(\Cst\)\nb-correspondence \((\Hilm(X),\sigma_G)\colon \Cst(G)\to \Cst(H)\) is proper if and only if \(X\colon \base\to H\) obtained by restricting \(X\colon G\to H\) to \(\base\) is proper in the sense of Definition~\ref{def:prop-corr}.
  \item If \(r_X\colon X\to \base\) is a proper map, then  the \(\Cst\)\nb-correspondence \((\Hilm(X),\sigma_G)\) from \(\Cst(G)\) to \(\Cst(H)\) is proper.
  \end{enumerate}
\end{proposition}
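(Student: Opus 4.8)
The plan is to reduce the whole statement to the unit space $\base$ and then read properness of $[r_X]$ directly off the structure of $\Comp(\Hilm(X))$. First I would record the two facts that make the reduction possible: $\Contz(\base)$ is a $\Cst$\nb-subalgebra of $\Cst(G)$ containing an approximate identity $\{u_n\}$ for $\Cst(G)$ (the Urysohn functions described at the start of this section), and $\sigma_{\base}=\sigma_G|_{\Contz(\base)}$. Since $\Comp(\Hilm(X))$ is a closed ideal of $\Bound(\Hilm(X))$, this gives at once
\[
\sigma_G(\Cst(G))\subseteq\Comp(\Hilm(X)) \iff \sigma_{\base}(\Contz(\base))\subseteq\Comp(\Hilm(X)),
\]
the forward implication being mere restriction, and the converse following from $\sigma_G(b)=\lim_n\sigma_{\base}(u_n)\,\sigma_G(b)$ together with $\sigma_{\base}(u_n)\in\Comp(\Hilm(X))$ and closedness of the ideal. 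Thus it suffices to analyse $\sigma_{\base}$.

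Next I would compute $\sigma_{\base}$ explicitly. Because $\base$ carries the point\nb-mass Haar system and the left action of the space $\base$ on $X$ is trivial with adjoining function $1$, Equation~\eqref{def:lr-act-inerpro} collapses to $\sigma_{\base}(b)\zeta=(b\circ r_X)\,\zeta$; that is, $\sigma_{\base}(b)$ is multiplication by the $H$\nb-invariant function $b\circ r_X$. I would then locate such multiplication operators inside $\Comp(\Hilm(X))\iso\Cst(\TComp(X))$ (Proposition~\ref{prop:implications-of-Cst-cat-2}), where $\TComp(X)=(X\times_{s_X,\base[H],s_X}X)/H$. The key topological input is that, since $s_X$ is a local homeomorphism, local injectivity forces the diagonal of $X\times_{s_X,\base[H],s_X}X$ to be open as well as closed; being $H$\nb-invariant it descends to a clopen copy of $X/H$, which is exactly the unit space of $\TComp(X)$. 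Hence $\TComp(X)$ is {\etale} and $\Contz(X/H)=\Contz(\base[\TComp(X)])$ sits inside $\Cst(\TComp(X))$ as the canonical commutative subalgebra. A direct check against Equation~\eqref{eq:hgpd-l-act-inpro} shows that $g\in\Contc(X/H)$ acts on $\Hilm(X)$ by multiplication by $g\circ q$, where $q\colon X\to X/H$ is the quotient; since $[r_X]\circ q=r_X$, the element $b\circ[r_X]$ represents $\sigma_{\base}(b)$ whenever it is compactly supported.

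The heart of part~(i), and the step I expect to be the main obstacle, is the forward (``only if'') direction: deducing properness of $[r_X]$ from compactness of every $\sigma_{\base}(b)$. Here I would use the faithful conditional expectation $E\colon\Cst(\TComp(X))\to\Contz(X/H)$ onto the units, which on $\Contc(\TComp(X))$ is restriction to the clopen unit space. Approximating $\sigma_{\base}(b)\in\Comp(\Hilm(X))$ by finite sums of rank\nb-one operators ${}_{\TComp(X)}\inpro{\xi}{\zeta}$, whose unit\nb-restrictions are compactly supported on $X/H$, yields $E(\sigma_{\base}(b))\in\Contz(X/H)$; and since $\sigma_{\base}(b)$ is itself a multiplication operator, one computes $E(\sigma_{\base}(b))=b\circ[r_X]$. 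Therefore compactness of every $\sigma_{\base}(b)$ forces $b\circ[r_X]\in\Contz(X/H)$ for all $b\in\Contz(\base)$, which by the standard characterisation of proper maps (a continuous $p$ is proper iff $g\circ p\in\Contz$ for every $g\in\Contz$) is precisely properness of $[r_X]\colon X/H\to\base$. The delicate point to nail down is the identification $E(\sigma_{\base}(b))=b\circ[r_X]$, i.e. that the multiplication operator has no off\nb-diagonal part; the clopenness of the unit space secured above is what makes this legitimate.

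Conversely, when $[r_X]$ is proper the conditions of Definition~\ref{def:prop-corr} for $X\colon\base\to H$ hold automatically: condition~(i) because the action of the space $\base$ is strongly locally free by Proposition~\ref{prop:etale-gpd-loc-free-act} while $\Mlp_{s_X}$ is the diagonal embedding, which is closed and open onto its image, and condition~(iii) by the computation recorded in Example~\ref{exa:space-to-gpd-etale-corr}. Hence Theorem~\ref{thm:prop-corr-main-thm} returns properness of the $\Cst$\nb-correspondence (and the representation formula above gives this directly as well). This establishes the chain $(\Hilm(X),\sigma_G)$ proper $\iff$ $[r_X]$ proper $\iff$ $X\colon\base\to H$ proper, proving~(i). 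Finally, part~(ii) is immediate: if $r_X$ is proper then $[r_X]^{-1}(K)=q(r_X^{-1}(K))$ is compact for every compact $K\subseteq\base$, so $[r_X]$ is proper, and part~(i) (equivalently Example~\ref{exa:space-to-gpd-etale-corr}) applies.
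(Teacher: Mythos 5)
Your reduction to the unit space (restriction in one direction; the $\Contc(\base)$\nb-valued approximate identity together with the ideal property of $\Comp(\Hilm(X))$ in the other) is exactly the paper's argument, and your backward direction --- $[r_X]$ proper $\Rightarrow$ conditions (i) and (iii) of Definition~\ref{def:prop-corr} hold automatically for the trivial action of $\base$ $\Rightarrow$ Theorem~\ref{thm:prop-corr-main-thm} applies --- is also how the paper proceeds, via Example~\ref{exa:space-to-gpd-etale-corr}. Where you genuinely depart from the paper is the forward direction of (i). The paper's own proof stops after showing that $\sigma_G(\Cst(G))\subseteq\Comp(\Hilm(X))$ implies $\sigma_{\base}(\Contz(\base))\subseteq\Comp(\Hilm(X))$; it never extracts from this the topological statement the proposition actually asserts, namely that $X\colon\base\to H$ is proper in the sense of Definition~\ref{def:prop-corr} (equivalently, since (i) and (iii) are automatic here, that $[r_X]\colon X/H\to\base$ is a proper map). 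Your conditional-expectation argument supplies precisely this missing converse-to-Theorem~\ref{thm:prop-corr-main-thm} step, and its topological ingredients are sound: the diagonal of $X\times_{s_X,\base[H],s_X}X$ is clopen because $s_X$ is locally injective and $X$ is Hausdorff; it descends to a clopen copy of $X/H$ in $\TComp(X)$; functions on this copy act on $\Hilm(X)$ as multiplication operators by Equation~\eqref{eq:hgpd-l-act-inpro}; and the $\Contz$\nb-characterisation of proper maps finishes the argument.

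Two points need more care than you give them. First, the existence of a contractive expectation $E\colon\Cst(\TComp(X))\to\Contz(X/H)$ extending restriction-to-units is classical for {\etale} \emph{groupoids}, but $\TComp(X)$ is a hypergroupoid, and neither the paper nor your proposal proves it in this setting; what you must show is that restriction to the clopen unit space is bounded from the norm of $\Cst(\TComp(X))\iso\Comp(\Hilm(X))$ to the sup norm. This can be done, for instance, by localising $\Hilm(X)$ at each $u\in\base[H]$ to obtain a representation of $\Comp(\Hilm(X))$ on $\ell^2(s_X\inverse(u))$ (recall the measures $\tau^{s_X}_u$ are counting measures), in which $f\in\Contc(\TComp(X))$ acts by the kernel $f[x,y]$; contractivity of any $\Cst$\nb-representation then yields $\abs{f[x,x]}\leq\norm{f}$, which is the estimate you need. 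Second, the identification $E(\sigma_{\base}(b))=b\circ[r_X]$, which you single out as the delicate point, is in fact the easy part once $E$ exists: for $g\in\Contc(X/H)$ the operator $g\cdot\sigma_{\base}(b)$ is multiplication by $g\cdot(b\circ[r_X])\in\Contc(X/H)$, so the $\Contz(X/H)$\nb-bimodule property of $E$ gives $g\cdot E(\sigma_{\base}(b))=g\cdot(b\circ[r_X])$ for all such $g$, hence equality pointwise. With these repairs your proof is correct, and it establishes more than the paper's own proof does.
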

\begin{proof}
  (i): Let \(\iota\colon \Contz(\base)\hookrightarrow \Cst(G)\) be the inclusion. Then the representations in Equation~\eqref{eq:space-to-e-gpd-corr} are related as \(\sigma_{\base}=\sigma_G\circ \iota\); this can be checked by writing formulae for the representations of  the pre-\(\Cst\)\nb-algebras \(\Contc(\base)\) and  \(\Contc(G)\) on the pre-Hilbert \(\Cst(H)\)\nb-module \(\Contc(X)\). Therefore, if \(\sigma_G(\Cst(G))\subseteq \Comp(\Hilm(X))\), then \(\sigma_{\base}(\Contz(\base))\subseteq \Comp(\Hilm(X))\).

  Conversely, assume that \(X\colon \base\to H\) is proper. Then Theorem~\ref{thm:prop-corr-main-thm} says that \(\sigma_{\base}(\Contz(\base))\subseteq \Comp(\Hilm(X))\). Let \(\{u_i\}\) be the approximate unit of the pre-\(\Cst\)-algebra \(\Contc(G)\) where each \(u_i\in\Contc(\base)\); such an approximate unit is discussed at the beginning of this section. Note that for \(u\in \Contc(\base)\), \(\sigma_G(u)(\xi)=\sigma_{\base}(u)(\xi)\) for every \(\xi\in\Contc(X)\). Let \(f\in \Contc(G)\). Then for every \(\xi\in\Contc(X)\),  we have
  \[
\sigma_G(f)(\xi)=\lim_i\sigma_G(f u_i)(\xi)=\lim_i\sigma_G(f)\sigma_{G}(u_i)(\xi)
\]
where each \(\sigma_G(f)\sigma_{G}(u_i)\in \Comp(\Hilm(X))\). Therefore, \(\sigma_G(f)=\lim_i\sigma_G(f)\sigma_{G}(u_i)\) is also in \(\Comp(\Hilm(X))\).

\noindent (ii): If \(r_X\colon X\to \base\) is a proper map, then Examples~\ref{exa:space-to-gpd-etale-corr} says that \(X\colon \base\to H\) is a proper correspondence in the sense of Definition~\ref{def:prop-corr}. Theorem~\ref{thm:prop-corr-main-thm} implies that
\((\Hilm(X),\sigma_G)\colon \Cst(G)\to \Cst(H)\) is a proper \(\Cst\)\nb-correspondence.
\end{proof}

\begin{lemma}
  \label{lem:etale-corr-full-supp}
Let \(G\) and \(H\) be {\etale} groupoids. Let \((X,\lambda)\colon Y\to H\) be a proper topological correspondence in the sense of Definition~\ref{def:prop-corr}, then
\begin{enumerate}[i)]
\item for each \(z\in Z\), the measure \(\lambda_z\) is atomic,
\item for each \(z\in Z\), each atom in \(\lambda_z\) is nonzero and the function \(\RN\) appearing in Definition~\ref{def:prop-corr}{(3)} positive. Thus the family of measure \(\lambda\) has full support.
\end{enumerate}
\end{lemma}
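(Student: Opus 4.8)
The plan is to read this as the étale-groupoid counterpart of Lemma~\ref{lem:prop-quiver-labda-is-atomic} and to run the same scheme, now fed by two facts peculiar to étale groupoids. First, Proposition~\ref{prop:etale-gpd-loc-free-act} tells us that the action of the étale groupoid \(G\) is strongly locally free, so that, together with the openness demanded in Definition~\ref{def:prop-corr}{(i)}, the map \(\Mlp_{s_X}\) is automatically a local homeomorphism onto its image; this is what lets me pull measures back through it by Corollary~\ref{cor:pulling-and-pasting-measures-2}. Secondly, the Haar system \(\alpha\) of \(G\) is atomic: each \(\alpha^{r_X(x)}\) is a full-support measure on the \emph{discrete} fibre \(G^{r_X(x)}\), and since \(\base\) is open in \(G\) the unit \(r_X(x)\) is isolated there, whence \(\alpha^{r_X(x)}(\{r_X(x)\})>0\). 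Throughout I would work with the invariant family \(\lambda_1^x=\delta_x\otimes\lambda_{s_X(x)}\) on the groupoid \(X_{s_X}\) of Example~\ref{exa:equi-rel-gpd} and with the reformulation \(\Mlp_{s_X}^*(\RN(x,\_)\lambda_1^x)=\alpha_2^x\) of the third condition, i.e.\ Equation~\eqref{equ:meaning-eq-prop-corr}; I index the measures by \(u\in\base[H]\), which is the ``\(z\in Z\)'' of the statement.

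The crux is to show that \emph{every} point of every fibre is an atom. Fix \(u\in\base[H]\) and \(w\in s_X\inverse(u)\), and choose the base point \(x=w\). Since the unit \(r_X(w)\) acts trivially, \((w,w)=\Mlp_{s_X}(r_X(w),w)\) lies in the image, and I would take \(U\in\Cov\) containing \((r_X(w),w)\) on which \(\Mlp_{s_X}\) is a homeomorphism. On \(G\ltimes X\) the measure \(\alpha_2^w\) carries an atom at the unit element \((r_X(w),w)\) of weight \(\alpha^{r_X(w)}(\{r_X(w)\})>0\). Since \(\alpha_2^w=\Mlp_{s_X}^*(\RN(w,\_)\lambda_1^w)\) and \(\Mlp_{s_X}|_U\) is a homeomorphism, the pullback of Corollary~\ref{cor:pulling-and-pasting-measures-2} identifies this atom with an atom of \(\RN(w,\_)\lambda_1^w\) at \(\Mlp_{s_X}(r_X(w),w)=(w,w)\) of equal weight. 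As \(\lambda_1^w=\delta_w\otimes\lambda_u\), this reads \(\RN(w,w)\,\lambda_u(\{w\})=\alpha^{r_X(w)}(\{r_X(w)\})>0\), whence \(\lambda_u(\{w\})>0\) and \(\RN(w,w)>0\). Running the same computation over the remaining atoms of \(\alpha_2^w\), which sit at \((\gamma,\gamma\inverse w)\) and map to \((w,\gamma\inverse w)\), shows \(\RN>0\) all along \(\Img(\Mlp_{s_X})\); this is part~{(ii)}.

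Part~{(i)} is then a formality. The previous paragraph gives \(\lambda_u(\{w\})>0\) for \emph{every} \(w\in s_X\inverse(u)\). As \(\lambda_u\) is \(\sigma\)-finite and Radon its atoms form a countable set, so the whole fibre \(s_X\inverse(u)\) is countable; a \(\sigma\)-finite measure living on a countable set has no continuous part, so \(\lambda_u=\sum_{w\in s_X\inverse(u)}\lambda_u(\{w\})\,\delta_w\) is atomic, and positivity of all the weights gives full support. This also re-derives the discreteness of the fibres of \(s_X\) that is built into the notion of an étale correspondence.

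The step I expect to be the real work is justifying the atom-transport cleanly: namely that \(\alpha_2^w\) genuinely has a positive atom at the unit element (which rests on \(\base\subseteq G\) being open and on the Haar system having full support) and that, over a set \(U\in\Mlp_{s_X}^{\HoC}\) where \(\Mlp_{s_X}\) is a homeomorphism, \(\Mlp_{s_X}^*\) identifies point masses with point masses of the same weight. If one prefers to argue with the test-function form of Equation~\eqref{eq:prop-corr} rather than with the measure identity, then one additionally needs the concentration statement of Lemma~\ref{lem:pulling-and-pasting-measures-3}{(i)} (the remark after Definition~\ref{def:prop-corr}), so that an extension \(f'\) which is nonzero off \(\Img(\Mlp_{s_X})\) still contributes only at \((w,w)\). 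Everything else is bookkeeping carried over verbatim from the proof of Lemma~\ref{lem:prop-quiver-labda-is-atomic}.
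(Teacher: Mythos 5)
Your proof is correct, but it follows a genuinely different route from the paper's. The paper's own proof is a two-line reduction: it asserts that (since \(r_X\) is proper) Example~\ref{exa:space-to-gpd-etale-corr} makes the restricted correspondence \(X\colon\base\to H\) proper, and then says both claims follow by rerunning the proof of Lemma~\ref{lem:prop-quiver-labda-is-atomic} for that restricted correspondence --- there, part (i) comes from concentration of \(\RN(x,\_)\lambda_1^x\) on the closed image of \(\Mlp_{s_X}\) (Lemma~\ref{lem:closed-image-of-Mlp-measure-conc}), and part (ii) from testing Equation~\eqref{eq:prop-corr} on nonnegative functions. You instead keep the full \(G\)\nb-action and use the {\etale} hypothesis on \(G\) quantitatively: the canonical Haar system consists of counting measures, so \(\alpha_2^w\) has a positive atom at each \((\gamma,\gamma\inverse w)\), and the identity \(\Mlp_{s_X}^*(\RN(w,\_)\lambda_1^w)=\alpha_2^w\) of Equation~\eqref{equ:meaning-eq-prop-corr}, glued over the cover \(\Cov\) via Proposition~\ref{prop:bourbaki-pasting-measures} and combined with Corollary~\ref{cor:pulling-and-pasting-measures-2}, transports these atoms through the local homeomorphism to give \(\RN(w,\gamma\inverse w)\,\lambda_u(\{\gamma\inverse w\})=\alpha^{r_X(w)}(\{\gamma\})>0\); \(\sigma\)\nb-finiteness then makes each fibre countable and each \(\lambda_u\) purely atomic with full support. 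Your route has two concrete advantages: it works from the hypotheses as stated, whereas the paper's reduction needs \(r_X\) (not merely \([r_X]\)) to be proper and tacitly places itself in the {\etale}-correspondence setting of Example~\ref{exa:space-to-gpd-etale-corr}; and it bypasses the concentration step, which in the space-case proof yields \(\supp(\lambda_1^x)\subseteq\{(x,x)\}\) --- a conclusion that cannot hold literally when a fibre carries more than one atom (e.g.\ a double cover with \(\RN\equiv1\)), because Definition~\ref{def:prop-corr} guarantees Equation~\eqref{eq:prop-corr} only for the chosen extension cover \(\Cov'\), not for an exhaustive one. What the paper's route buys is brevity: it reuses the space-case lemma verbatim rather than redoing any measure theory. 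Note finally that both arguments establish positivity of \(\RN\) only on \(\Img(\Mlp_{s_X})\), which is all the statement can reasonably assert.
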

\begin{proof}
In this case, since \(r_X\colon X\to\base\) is a proper map, Examples~\ref{exa:space-to-gpd-etale-corr}  says that the correspondence \(X\colon\base\to H\) obtained by restricting \(X\colon G\to H\) is proper. We prove the claims of present lemma for the restricted correspondence. If one writes the map \(\Mlp_{s_X}\) for the restricted correspondence, then one observes that the proof of both the claims run on the same lines as the one of Lemma~\ref{lem:prop-quiver-labda-is-atomic}.
\end{proof}

\paragraph{\bfseries Acknowledgement:} We are grateful to Ralf Meyer, Suliman Albandik and Alcides Buss for many fruitful discussions. Special thanks to S.\,Albandik for many motivating discussions. We thank Ralf Meyer and Jean Renault for checking the manuscript, pointing out mistakes and suggesting improvements.
This work is done during the author's stay in the Federal University of Santa Caterina, Florianop{\'o}lis, Brazil and Indian Institute of Science Education and Research, Pune, India. The author is thankful to both the institutes for their hospitality. We are thankful to CNPq, Brazil and SERB, India whose funding made this work possible.

\begin{bibdiv}
\begin{biblist}

\bib{Bourbaki1966Topologoy-Part-1}{book}{
      author={Bourbaki, Nicolas},
       title={Elements of mathematics. {G}eneral topology. {P}art 1},
   publisher={Hermann, Paris; Addison-Wesley Publishing Co., Reading,
  Mass.-London-Don Mills, Ont.},
        date={1966},
      review={\MR{0205210 (34 \#5044a)}},
}

\bib{Bourbaki2004Integration-I-EN}{book}{
      author={Bourbaki, Nicolas},
       title={Integration. {I}. {C}hapters 1--6},
      series={Elements of Mathematics (Berlin)},
   publisher={Springer-Verlag, Berlin},
        date={2004},
        ISBN={3-540-41129-1},
        note={Translated from the 1959, 1965 and 1967 French originals by
  Sterling K. Berberian},
      review={\MR{2018901 (2004i:28001)}},
}

\bib{Buss-Holkar-Meyer2016Gpd-Uni-I}{article}{
      author={Buss, Alcides},
      author={Holkar, Rohit~Dilip},
      author={Meyer, Ralf},
       title={A universal property for groupoid {\(c^*\)}-algebras},
        date={2016},
        note={Preprint- arxiv:1612.04963v1},
}

\bib{Candel-Conlon2000Foliations-I}{book}{
      author={Candel, Alberto},
      author={Conlon, Lawrence},
       title={Foliations. {I}},
      series={Graduate Studies in Mathematics},
   publisher={American Mathematical Society, Providence, RI},
        date={2000},
      volume={23},
        ISBN={0-8218-0809-5},
      review={\MR{1732868}},
}

\bib{Folland1995Harmonic-analysis-book}{book}{
      author={Folland, Gerald~B.},
       title={A course in abstract harmonic analysis},
      series={Studies in Advanced Mathematics},
   publisher={CRC Press, Boca Raton, FL},
        date={1995},
        ISBN={0-8493-8490-7},
      review={\MR{1397028 (98c:43001)}},
}

\bib{Hilsum-Skandalis1987Morphismes-K-orientes-deSpaces-deFeuilles-etFuncto-KK}{article}{
      author={Hilsum, Michel},
      author={Skandalis, Georges},
       title={Morphismes {$K$}-orient\'es d'espaces de feuilles et
  fonctorialit\'e en th\'eorie de {K}asparov (d'apr\`es une conjecture d'{A}.
  {C}onnes)},
        date={1987},
        ISSN={0012-9593},
     journal={Ann. Sci. \'Ecole Norm. Sup. (4)},
      volume={20},
      number={3},
       pages={325\ndash 390},
         url={http://www.numdam.org/item?id=ASENS_1987_4_20_3_325_0},
      review={\MR{925720 (90a:58169)}},
}

\bib{mythesis}{thesis}{
      author={Holkar, Rohit~Dilip},
       title={Topological construction of $\textup{C}^*$-correspondences for
  groupoid ${C}^*$-algebras},
        type={Ph.D. Thesis},
        date={2014},
}

\bib{Holkar2017Composition-of-Corr}{article}{
      author={Holkar, Rohit~Dilip},
       title={Composition of topological correspondences},
        date={2017},
     journal={Journal of {O}perator {T}heory},
      volume={78.1},
       pages={89\ndash 117},
}

\bib{Holkar2017Construction-of-Corr}{article}{
      author={Holkar, Rohit~Dilip},
       title={Topological construction of {$C^*$}-correspondences for groupoid
  {$C^*$}-algebras},
        date={2017},
     journal={Journal of {O}perator {T}heory},
      volume={77:1},
      number={23-24},
       pages={217\ndash 241},
}

\bib{Holkar-Renault2013Hypergpd-Cst-Alg}{article}{
      author={Holkar, Rohit~Dilip},
      author={Renault, Jean},
       title={Hypergroupoids and {$C^*$}-algebras},
        date={2013},
        ISSN={1631-073X},
     journal={C. R. Math. Acad. Sci. Paris},
      volume={351},
      number={23-24},
       pages={911\ndash 914},
         url={http://dx.doi.org/10.1016/j.crma.2013.11.003},
      review={\MR{3133603}},
}

\bib{Khoshkam-Skandalis2002Reg-Rep-of-Gpd-Cst-Alg-and-Applications}{article}{
      author={Khoshkam, Mahmood},
      author={Skandalis, Georges},
       title={Regular representation of groupoid {$C^*$}-algebras and
  applications to inverse semigroups},
        date={2002},
        ISSN={0075-4102},
     journal={J. Reine Angew. Math.},
      volume={546},
       pages={47\ndash 72},
         url={http://dx.doi.org/10.1515/crll.2002.045},
      review={\MR{1900993}},
}

\bib{Stadler-Ouchi1999Gpd-correspondences}{article}{
      author={Macho~Stadler, Marta},
      author={O'uchi, Moto},
       title={Correspondence of groupoid {$C^\ast$}-algebras},
        date={1999},
        ISSN={0379-4024},
     journal={J. Operator Theory},
      volume={42},
      number={1},
       pages={103\ndash 119},
      review={\MR{1694789 (2000f:46077)}},
}

\bib{Meyer-Zhu2014Groupoids-in-categories-with-pretopology}{article}{
      author={{Meyer}, R.},
      author={{Zhu}, C.},
       title={{Groupoids in categories with pretopology}},
        date={2014-08},
     journal={ArXiv e-print},
      eprint={1408.5220},
}

\bib{Muhly-Renault-Williams1987Gpd-equivalence}{article}{
      author={Muhly, Paul~S.},
      author={Renault, Jean~N.},
      author={Williams, Dana~P.},
       title={Equivalence and isomorphism for groupoid {$C^\ast$}-algebras},
        date={1987},
        ISSN={0379-4024},
     journal={J. Operator Theory},
      volume={17},
      number={1},
       pages={3\ndash 22},
      review={\MR{873460 (88h:46123)}},
}

\bib{Muhly-Tomforde-2005-Topological-quivers}{article}{
      author={Muhly, Paul~S.},
      author={Tomforde, Mark},
       title={Topological quivers},
        date={2005},
        ISSN={0129-167X},
     journal={Internat. J. Math.},
      volume={16},
      number={7},
       pages={693\ndash 755},
         url={http://dx.doi.org/10.1142/S0129167X05003077},
      review={\MR{2158956 (2006i:46099)}},
}

\bib{PatersonA1999Gpd-InverseSemigps-Operator-Alg}{book}{
      author={Paterson, Alan L.~T.},
       title={Groupoids, inverse semigroups, and their operator algebras},
      series={Progress in Mathematics},
   publisher={Birkh\"auser Boston, Inc., Boston, MA},
        date={1999},
      volume={170},
        ISBN={0-8176-4051-7},
         url={http://dx.doi.org/10.1007/978-1-4612-1774-9},
      review={\MR{1724106 (2001a:22003)}},
}

\bib{Renault1980Gpd-Cst-Alg}{book}{
      author={Renault, Jean},
       title={A groupoid approach to {$C^{\ast} $}-algebras},
      series={Lecture Notes in Mathematics},
   publisher={Springer, Berlin},
        date={1980},
      volume={793},
        ISBN={3-540-09977-8},
      review={\MR{584266 (82h:46075)}},
}

\bib{Renault1985Representations-of-crossed-product-of-gpd-Cst-Alg}{article}{
      author={Renault, Jean},
       title={Repr\'esentation des produits crois\'es d'alg\`ebres de
  groupo\"\i des},
        date={1987},
        ISSN={0379-4024},
     journal={J. Operator Theory},
      volume={18},
      number={1},
       pages={67\ndash 97},
      review={\MR{912813 (89g:46108)}},
}

\bib{Renault2014Induced-rep-and-Hpgpd}{article}{
      author={Renault, Jean},
       title={Induced representations and hypergroupoids},
        date={2014},
        ISSN={1815-0659},
     journal={SIGMA Symmetry Integrability Geom. Methods Appl.},
      volume={10},
       pages={Paper 057, 18},
      review={\MR{3226993}},
}

\bib{Tu2004NonHausdorff-gpd-proper-actions-and-K}{article}{
      author={Tu, Jean-Louis},
       title={Non-{H}ausdorff groupoids, proper actions and {$K$}-theory},
        date={2004},
        ISSN={1431-0635},
     journal={Doc. Math.},
      volume={9},
       pages={565\ndash 597 (electronic)},
      review={\MR{2117427 (2005h:22004)}},
}

\end{biblist}
\end{bibdiv}

\end{document}